\documentclass{amsart}
\usepackage{rotating}
\usepackage{enumerate}
\usepackage{amsfonts}
\usepackage{amsmath}
\usepackage{amssymb}
\usepackage{textcomp}
\usepackage{float,lscape}
\usepackage{mathtools}
\usepackage[titletoc,toc,title]{appendix}

\usepackage{enumitem}
\usepackage[arrow, matrix, curve, graph]{xy}
\usepackage{tikz}
\usetikzlibrary{arrows}
\usepackage[hyphens]{url}  
\newtheorem{theorem}{Theorem}[section]

\newtheorem{lemma}[theorem]{Lemma}

\newtheorem{proposition}[theorem]{Proposition}

\newtheorem{corollary}[theorem]{Corollary}

\theoremstyle{definition}

\newtheorem{definition}[theorem]{Definition}

\newtheorem{example}[theorem]{Example}

\newtheorem{remark}[theorem]{Remark}

\newtheorem *{Lemma 1}{Lemma 1}
\newtheorem *{Theorem 2}{Theorem 2}
\newtheorem *{Theorem 5}{Theorem 5}
\newtheorem *{Theorem 3}{Theorem 3}
\newtheorem *{Theorem 4}{Theorem 4}

\newtheorem *{Problem1}{The group ring isomorphism problem [GRIP]}
\newtheorem *{Problem2}{The integral group ring isomorphism problem [IGRIP]}
\newtheorem *{Problem3}{Main Problem: The twisted group ring isomorphism problem [TGRIP]}

\newcommand{\C}{{\mathbb C}}

\newcommand{\SL}{{\operatorname {SL}}}

\newcommand{\RNum}[1]{\uppercase\expandafter{\romannumeral #1\relax}}
\newcommand{\rNum}[1]{\lowercase\expandafter{\romannumeral #1\relax}}

\begin{document}
\title{Twisted group ring isomorphism problem}
\address{Departamento de matem\'aticas, Facultad de matem\'aticas, Universidad de Murcia, 30100 Murcia, Spain}
\address{Institute of Algebra and Number Theory, Pfaffenwaldring 57\\
University of Stuttgart, Stuttgart 70569, Germany}
\author{Leo Margolis}
\email{leo.margolis@um.es}
\author{Ofir Schnabel}
\email{os2519@yahoo.com}
\thanks{The first author has been supported by a Marie Curie grant from EU project 705112-ZC. The second author has been supported by the Minerva Stiftung.}

\begin{abstract}
We propose and study a variation of the classical isomorphism problem for group rings in the context of projective representations. We formulate several weaker conditions following from our notion and give all logical connections between these condition by studying concrete examples.
We introduce methods to study the problem and provide results for various classes of groups, including abelian groups, groups of central type, $p$-groups of order $p^4$ and groups of order $p^2q^2$, where $p$ and $q$ denote different primes.
\end{abstract}

\maketitle
\bibliographystyle{abbrv}
\noindent \textbf {2010 Mathematics Subject Classification:}
16S35, 20C25, 20E99.
\section{Introduction}\label{Intro}\pagenumbering{arabic} \setcounter{page}{1}
For a finite group $G$ and a commutative ring $R$ the group ring $RG$ of $G$ over $R$ is a classical object of representation theory.
The question which information about $G$ is encoded in the ring structure of $RG$ has been studied by many authors,
see e.g. \cite[Chapter 12]{Isaacs} on results for $R = \mathbb{C}$ or \cite{Hertweck, RoggenkampScott} for $R = \mathbb{Z}$.
In this context so-called isomorphism problems are of particular interest.
Denote by $\Omega$ the class of finite groups, by $\Omega_n$ the groups of order $n$ and denote by $\Delta_R$ an equivalence relation on $\Omega$
which is defined by $G \Delta_R H$ if and only if $RG \cong RH$.
\begin{Problem1}
For a given commutative ring $R$, determine the equivalence classes of $\Omega$ with respect to the relation $\Delta_R$.
Answer in particular, for which groups $G\Delta_R H$ implies $G \cong H$.
\end{Problem1}
Informally, the 'in particular' part of [GRIP] asks for which groups the information contained in $RG$ determines $G$ up to isomorphism.
It is fairly easy to find rings and groups which give rise to a negative answer to the 'in particular' part of [GRIP].
For example, for any abelian groups $G$, $H$ of the same cardinality, $G \Delta_{\C} H$.
On the other hand, if $G$, $H$ are finite abelian groups and $G \Delta_{\mathbb{Q}} H$, then $G \cong H$ \cite[Theorem 3]{Perlis}.
M. Hertweck found non-isomorphic groups $G$ and $H$ such that $\mathbb{Z}G$ and $\mathbb{Z}H$ are isomorphic \cite{Hertweck}. Since for any commutative ring $R$,
\begin{equation*}
RG \cong R\otimes _{\mathbb{Z}}\mathbb{Z}G,
\end{equation*}
Hertweck's example shows the existence of groups which do not provide a positive answer for the 'in particular' part of [GRIP] over any ring $R$.
However many interesting questions regarding special classes of groups over specific rings are open. E.g. the modular isomorphism problem,
which asks whether $RG$ determines the structure of $G$ for a field $R$ of characteristic $p$ and a $p$-group $G$, or the integral isomorphism problem for groups of odd order.\\

In this paper we propose and study a 'twisted' version of [GRIP].
For a $2$-cocycle $\alpha \in Z^2(G, R^*)$ the twisted group ring $R^\alpha G$ of $G$ over $R$ with respect to $\alpha$ is the free $R$-module with basis $\{u_g\}_{g \in G}$ where the multiplication on the basis is defined via
\[u_g u_h = \alpha(g, h) u_{gh} \ \ \text{for all} \ \ g,h \in G \]
and any $u_g$ commutes with the elements of $R$.

The ring structure of $R^{\alpha}G$ depends only on the cohomology class of $\alpha$ and not on the particular $2$-cocycle.
The role twisted group rings of $G$ over $R$ play for the projective representation theory is in many ways the same played by the group ring $RG$ for the representation theory of $G$ over $R$,
as it was shown in the ground laying work of I. Schur \cite{Schur}.

For a $2$-cocycle $\alpha \in Z^2(G, R^*)$ we are going to denote by $[\alpha]\in H^2(G,R^*)$ the $2$-cohomology class of $\alpha$. We are now ready to introduce the relation of interest.
\begin{definition}
Let $R$ be a commutative ring. For $G,H \in \Omega$, $G \sim_R H$ if there exists a group isomorphism
$$\psi :H^2(G,R^*)\rightarrow H^2(H,R^*)$$
such that for any $[\alpha] \in H^2(G,R^*)$,
$$R^{\alpha}G\cong R^{\psi (\alpha)}H.$$
\end{definition}
Note that any group isomorphism $\psi: G \rightarrow H$ induces a group isomorphism $\bar{\psi}:Z^2(G,R^*)  \rightarrow Z^2(H,R^*)$ via
\[\bar{\psi}(\alpha)(x,y)=\alpha(\psi ^{-1}(x),\psi^{-1}(y)) \ \ \text{for} \ \ \alpha \in Z^2(G,R^*) \ \text{and} \ x,y \in H.\]
This in turn induces a natural ring isomorphism
    \begin{align*}
    \tilde{\psi}:R^\alpha G  & \rightarrow R^{\bar{\psi} (\alpha)}H, \\
    u_g  & \mapsto v_{\psi (g)},
    \end{align*}
and therefore the isomorphism equivalence relation of groups is a refinement of $\sim_R$. Note also that since $\psi$ maps the identity of $H^2(G, R^*)$ to the identity of $H^2(H, R^*)$ the relation $G\sim_R H$ implies $RG \cong RH$ and thus $\sim_R$ is a refinement of $\Delta_R$.

Thus we may formulate the following problem.

\begin{Problem3}
For a given commutative ring $R$, determine the equivalence classes of $\Omega$ with respect to the relation $\sim_R$.
Answer in particular, for which groups $G \sim_R H$ implies $G \cong H$.
\end{Problem3}

Consequently, another wording for the in particular part of [TGRIP] is that for a given commutative ring $R$, we wish to classify all the $\sim_R$-singletons, up to isomorphism. A related question was considered in \cite{HoffmanHumphreys}.

The difference between the twisted problem and the regular problem is demonstrated in the following lemma.
While all abelian groups of the same cardinality are $\Delta_{\mathbb{C}}$-equivalent, the situation is quite different for $\sim_{\C}$, as the following Lemma shows.
\begin{lemma}\label{lemma:abcase}
 Any abelian group $A$ is a singleton with respect to $\sim_{\C}$.
\end{lemma}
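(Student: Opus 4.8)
The plan is to reduce the statement to a purely combinatorial fact about partitions. Suppose $A\sim_{\C}B$. Since $\sim_{\C}$ refines $\Delta_{\C}$ (as noted above), $\C A\cong\C B$; because $\C A$ is commutative this forces $B$ to be abelian, and comparing $\C$-dimensions gives $|A|=|B|$. Moreover the definition of $\sim_{\C}$ supplies, in particular, an abstract group isomorphism $H^2(A,\C^*)\cong H^2(B,\C^*)$. Hence it suffices to prove the group-theoretic statement: a finite abelian group is determined up to isomorphism by its order together with the isomorphism type of its Schur multiplier $H^2(-,\C^*)$.

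Next I would pass to the exterior square and to $p$-groups. For a finite abelian group $A$ one has $H^2(A,\C^*)\cong\mathrm{Hom}(H_2(A,\Z),\C^*)\cong H_2(A,\Z)\cong\bigwedge^2 A$, the middle isomorphism because $H_2(A,\Z)$ is finite and $\C^*$ contains all roots of unity, and the last because the Schur multiplier of an abelian group is its exterior square; concretely, for $A\cong\bigoplus_{i=1}^k\Z/n_i$ one has $\bigwedge^2 A\cong\bigoplus_{i<j}\Z/\gcd(n_i,n_j)$. Both $|A|$ and $\bigwedge^2 A$ respect the primary decomposition — indeed $\bigwedge^2 A\cong\bigoplus_p\bigwedge^2 A_p$ is the primary decomposition of $\bigwedge^2 A$ — so $|A|=|B|$ and $\bigwedge^2 A\cong\bigwedge^2 B$ yield $|A_p|=|B_p|$ and $\bigwedge^2 A_p\cong\bigwedge^2 B_p$ for every prime $p$. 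Thus the problem reduces to recovering a finite abelian $p$-group from the pair consisting of its order and the multiset of elementary divisors of its exterior square. Writing $A\cong\bigoplus_{i=1}^k\Z/p^{\lambda_i}$ with $\lambda_1\geq\dots\geq\lambda_k\geq 1$, so that $\bigwedge^2 A\cong\bigoplus_{i<j}\Z/p^{\min(\lambda_i,\lambda_j)}$, I must reconstruct the partition $\lambda$ from $n:=\sum_i\lambda_i$ and the multiset $\{\min(\lambda_i,\lambda_j):i<j\}$.

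For the combinatorial core I would argue via the conjugate partition $\lambda'$, where $\lambda'_c=\#\{i:\lambda_i\geq c\}$. For each $c\geq 1$, the number of elementary divisors of $\bigwedge^2 A$ of order at least $p^c$ equals $\#\{(i,j):i<j,\ \lambda_i\geq c,\ \lambda_j\geq c\}=\binom{\lambda'_c}{2}$, and this number is read off from the abstract group $\bigwedge^2 A$. Since $x\mapsto\binom{x}{2}$ is strictly increasing on $\{1,2,3,\dots\}$ while $\binom{0}{2}=\binom{1}{2}=0$, we recover $\lambda'_c$ exactly whenever this number is nonzero, that is, whenever $\lambda'_c\geq 2$, equivalently for every $c\leq\lambda_2$; and $\lambda_2$ itself is identified as the largest $c$ with $\binom{\lambda'_c}{2}\geq 1$ (if $\bigwedge^2 A=0$ then $k\leq 1$ and $\lambda=(n)$). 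The values $\lambda'_1,\dots,\lambda'_{\lambda_2}$ determine the tail partition $(\lambda_2,\dots,\lambda_k)$ — it is conjugate to $(\lambda'_1-1,\dots,\lambda'_{\lambda_2}-1)$ — and then $\lambda_1=n-(\lambda_2+\dots+\lambda_k)$ is supplied by the order. This reconstructs $\lambda$, hence $A$, so $A\cong B$ and the lemma follows. I expect the main obstacle to be exactly the failure of injectivity of $\binom{\cdot}{2}$ at $0$ and $1$: it is the only reason $\bigwedge^2 A$ alone cannot recover the largest part $\lambda_1$, and it pinpoints where the datum $|A|$ (equivalently, $\C A\cong\C B$) is genuinely needed. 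Note that no information from twisted group algebras with nontrivial cocycle enters, although one could alternatively extract $\lambda$ from the Wedderburn decompositions of the rings $\C^{\alpha}A$.
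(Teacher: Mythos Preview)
Your proof is correct and follows essentially the same strategy as the paper: both reduce to the claim that a finite abelian group is determined by its order together with its Schur multiplier, and both identify $M(A)$ with $\bigoplus_{i<j}\Z/\gcd(n_i,n_j)$ (the paper via Schur's explicit generators $\alpha_{ij}$, you via $\bigwedge^2 A$). The difference is one of completeness: the paper simply asserts ``as a consequence of the above discussion non-isomorphic abelian groups of the same cardinality admit non-isomorphic cohomology groups'' and stops, whereas you actually supply the combinatorial verification via conjugate partitions and the map $x\mapsto\binom{x}{2}$, correctly isolating the single ambiguity (the largest part $\lambda_1$) and resolving it with the order datum.
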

The proof of Lemma \ref{lemma:abcase} is given at the beginning of \S\ref{diagram}.

In this paper we will concentrate on the case $R = \mathbb{C}$. In this case the cohomology group $H^2(G, \mathbb{C^*})$ is often called the Schur multiplier of $G$ and we are going to denote it by $M(G)$. As proven by Schur, $M(G)$ is always a finite group \cite[Kapitel V, Hilfssatz 23.2]{Huppert}.

In a similar way to the regular case, an $\alpha$-projective representation of a group $G$ can be defined as a module over a twisted group ring $R^{\alpha}G$, where $\alpha \in Z^2(G,R^*)$.
A generalized Maschke's theorem says that for fields $F$ and finite groups $G$ such that $(\text{char}(F),|G|)=1$, twisted group algebras $F^{\alpha}G$ are semisimple for any $2$-cocycle $\alpha \in Z^2(G,F^*)$.
We remark that unlike in the ordinary case, there exist semisimple twisted group algebras $F^{\alpha}G$ where $F$ is a field and $G$ is a group such that $(\text{char}(F),|G|)>1$ (see, e.g, \cite[Theorem 3.3]{AGdR}). Let $G$ be a finite group and let $F$ be a field such that the cardinality of $G$ is prime to char$(F)$.
When a group $G$ admits a unique irreducible $\alpha$-projective representation, $\alpha$ is called nondegenerate. A group $G$ which admits a nondegenerate $\alpha \in Z^2(G, \mathbb{C}^*)$ is called of central type. The corresponding twisted group algebra $\mathbb{C}^{\alpha}G$ is simple.
Since the Artin-Wedderburn decomposition of $\mathbb{C}^\alpha G$ only depends on the cohomology class of $\alpha$, we may refer to nondegenerate cohomology classes.

We are going to solve [TGRIP] for some classes of groups. Namely we will prove
\begin{theorem}\label{th:p2q2}
Let $G$ and $H$ be groups of cardinality $n$ and let $p\neq q$ be primes. Consider the conditions
\begin{enumerate}[label=(\alph*)]
 \item $\C G\cong \C H$,
 \item $M(G) \cong M(H)$,
 \item $G$ and $H$ are not of central type.
\end{enumerate}
Then
\begin{enumerate}
 \item If $n=p^2 q^2$, then the first two of the conditions above are sufficient to determine that $G\sim _{\C} H$.
 \item If $n=p^4$ then these three conditions imply that $G\sim _{\C} H$.
\end{enumerate}
\end{theorem}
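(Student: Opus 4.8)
The plan is to exploit semisimplicity of $\mathbb{C}^\alpha G$ throughout: by the generalized Maschke theorem $\mathbb{C}^\alpha G$ is, as a ring, a sum $\bigoplus_i M_{n_i}(\mathbb{C})$, and two such rings are isomorphic exactly when their degree multisets agree, so write $\mathrm{cd}_\alpha(G)$ for this multiset (thus $\sum_i n_i^2 = |G|$, and $\mathrm{cd}_0(G)$ is the multiset of ordinary character degrees). Then $G \sim_{\mathbb{C}} H$ means exactly that there is a group isomorphism $\psi\colon M(G)\to M(H)$ with $\mathrm{cd}_\alpha(G) = \mathrm{cd}_{\psi(\alpha)}(H)$ for all $[\alpha]$. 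Two reductions shrink the search for $\psi$. First, $\mathrm{cd}_\alpha(G) = \mathrm{cd}_{\phi_*\alpha}(G)$ for $\phi\in\mathrm{Aut}(G)$, by the ring isomorphism $\tilde\phi$ of the introduction. Second, for a field automorphism $\sigma$ of $\mathbb{C}$ the coefficientwise map $\mathbb{C}^\alpha G\to\mathbb{C}^{\sigma\circ\alpha}G$ is a ring isomorphism; taking $\sigma(\zeta)=\zeta^k$ on roots of unity gives $\mathrm{cd}_\alpha(G)=\mathrm{cd}_{k\alpha}(G)$ whenever $k$ is coprime to the order of $[\alpha]$ in $M(G)$. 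So $\mathrm{cd}$ is constant on the orbits of the combined action of $\mathrm{Aut}(G)$ and these power maps, and I only need to compute it on orbit representatives and choose $\psi$ so that $G$-orbits go to $H$-orbits of equal $\mathrm{cd}$.

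For part (1) the key structural input is that $M(G)$ is forced to be small. If $P\in\mathrm{Syl}_p(G)$ then $\mathrm{cor}\circ\mathrm{res}$ on $H^2(G,\mathbb{C}^*)$ is multiplication by $[G{:}P]$, a unit on the $p$-part, so $\mathrm{res}$ restricted to $M(G)_p$ is injective and lands in $M(P)\in\{1,C_p\}$ since $|P|=p^2$; likewise for $q$. Hence $M(G)$, and by (b) also $M(H)$, is cyclic of squarefree order dividing $pq$. Since a group isomorphism $\psi$ between such groups preserves orders, it maps the ``$p$-type'', ``$q$-type'' and ``full-type'' nonidentity classes (those generating the $p$-part, the $q$-part, all of $M(G)$) to the corresponding classes of $M(H)$, and each of these is a single power-orbit, so a suitable $\psi$ exists as soon as $\mathrm{cd}_\alpha(G)$ on each type is recovered from $\mathbb{C}G$ and $M(G)$. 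For such an $[\alpha]$ with nontrivial restriction to a Sylow $\ell$-subgroup $S$ ($\ell\in\{p,q\}$), that restriction is nontrivial in $M(S)=C_\ell$, which forces $S\cong C_\ell\times C_\ell$ and $\mathbb{C}^{\mathrm{res}\,\alpha}S\cong M_\ell(\mathbb{C})$; I would then classify the groups of order $p^2q^2$ with nontrivial multiplier (they are solvable with a restricted structure) and compute $\mathbb{C}^\alpha G$ from this Sylow datum via Clifford theory along the normal complement, expressing $\mathrm{cd}_\alpha(G)$ through the ordinary character degrees of $G$. This classification-plus-Clifford computation is the main work of part (1).

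For part (2), the groups of order $p^4$ form a finite explicit list and the argument is a finite verification organized by the invariants. If $G$ is abelian then (a) forces $H$ abelian, the abelian groups of order $p^4$ have pairwise non-isomorphic multipliers ($1,\,C_p,\,C_{p^2},\,C_p^3,\,C_p^6$), so (b) forces $H\cong G$ and hence $G\sim_{\mathbb{C}}H$ (indeed $G$ is a singleton by Lemma~\ref{lemma:abcase}). For $G$ non-abelian and not of central type I would, for each such group, record $M(G)$ and compute $\mathrm{cd}_\alpha(G)$ on orbit representatives $[\alpha]$; a convenient tool is a representation group $\widetilde{G}$ of order $p^4|M(G)|$, where $\mathrm{cd}_\alpha(G)$ appears as the degrees of the irreducible characters of $\widetilde{G}$ lying over the linear character of the central copy of $M(G)$ attached to $[\alpha]$, and these are accessible by Clifford theory over $G'$ (of index at least $p^2$). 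One then checks, case by case, that the family $\bigl(\mathrm{cd}_\alpha(G)\bigr)_{[\alpha]}$ — up to the orbit ambiguity of the first paragraph — is determined by the ordinary character degrees together with the isomorphism type of $M(G)$; equality of these two invariants for $G$ and $H$ then delivers $\psi$. Hypothesis (c) is used precisely to discard the few groups of order $p^4$ of central type, for which a nondegenerate $[\alpha]$ does give $\mathbb{C}^\alpha G\cong M_{p^2}(\mathbb{C})$ but the remaining classes need not be controlled by (a) and (b) alone (these are covered by the separate results on groups of central type). I expect the main obstacles to be exactly these: the structure-and-Clifford analysis behind part (1), and, for part (2), proving that $\mathrm{cd}_\alpha$ really is constant on the relevant $\mathrm{Aut}(G)$-power-orbits and really is recoverable from $\bigl(\mathbb{C}G,M(G)\bigr)$ uniformly over the list, with the central-type groups being the obstruction that hypothesis (c) removes.
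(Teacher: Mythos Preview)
Your plan is sound in outline, but the paper's proof is far shorter because it sidesteps almost all of the case-by-case Clifford/classification work you anticipate. The single observation you are missing is Lemma~\ref{lemma:cohdiv} (Higgs): the order of $[\alpha]\in M(G)$ divides every entry of $\mathrm{cd}_\alpha(G)$. For part~(2) this finishes the argument in one stroke. If $|G|=p^4$, a nontrivial $[\alpha]$ has order divisible by $p$, so every $\alpha$-degree is divisible by $p$; since $\sum n_i^2=p^4$ and hypothesis~(c) rules out the single block $M_{p^2}(\mathbb{C})$, the only possibility is $\mathbb{C}^\alpha G\cong p^2\,\mathbb{C}^{p\times p}$ for \emph{every} nontrivial degenerate $[\alpha]$. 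Hence \emph{any} isomorphism $\psi:M(G)\to M(H)$ works: it sends the identity to the identity (use~(a)), and all other classes give the same algebra. No enumeration of groups of order $p^4$, no representation groups, no orbit bookkeeping is needed.

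For part~(1) the paper again avoids a full classification. After noting (as you did) that $M(G)$ is cyclic dividing $pq$, it splits by $|M(G)|$. The trivial case is immediate; the $C_{pq}$ case is handled by the fact that there is at most one non-abelian group of order $p^2q^2$ with $M\cong C_{pq}$. For $M(G)\cong C_q$ (with $p<q$) Higgs's lemma forces every nontrivial $\alpha$-degree to be divisible by $q$, hence equal to $q$, and $\mathbb{C}^\alpha G\cong p^2\,\mathbb{C}^{q\times q}$ is again independent of the group. Only the case $M(G)\cong C_p$ needs a genuine argument, and here the paper uses $\alpha$-regularity rather than Clifford theory: for such $[\alpha]$ the $\alpha$-regular classes are exactly the conjugacy classes contained in the normal Sylow $q$-subgroup $Q$, so $\dim Z(\mathbb{C}^\alpha G)$ equals the number of $G$-classes in $Q$, which is read off from $\mathbb{C}G$. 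Since every $\alpha$-degree is $p$ or $p^2$, the pair $(|G|,\dim Z)$ determines $\mathrm{cd}_\alpha(G)$, and again any $\psi$ works. Your route would succeed, but trades a two-line degree constraint for a structure classification plus Clifford analysis; the paper's approach is uniform in $p,q$ and requires no list of groups.
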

Theorem~\ref{th:p2q2}(1) is Lemma \ref{lemma:p4enough2} and Theorem~\ref{th:p2q2}(2) is Theorem~\ref{th:pqcase}.
Using Theorem~\ref{th:p2q2}, we classify in Theorem~\ref{th:p4class} the $\sim_{\C}$-classes of $\Omega_{p^4}$, where $p$ is a prime,
and demonstrate in Example~\ref{ex:319} how to classify the $\sim_{\C}$-classes of $\Omega_{p^2q^2}$ for primes $p\neq q$.
It is important to notice that in general the conditions in Theorem \ref{th:p2q2} are not sufficient to determine that two groups are $\sim _{\C}$-equivalent (see Examples \ref{Ex2} and \ref{Ex3}.).

It is clear that a group of central type can be $\sim _{\C}$-equivalent only to other groups of central type.
\S\ref{se:groupsofct} is devoted to the study of $\sim _{\C}$-equivalence classes consisting of groups of central type.
Denote the center and the commutator subgroup of $G$ by $Z(G)$ and $G'$ respectively. Note that if $G$ is a group of central type of order $n^2$ where $n$ is a square-free number then $\frac{|G|}{|G'||Z(G)|}$ is an integer (see Theorem \ref{th:squarefreedec}). We prove the following.
\begin{theorem}\label{th:centyp}
Let $G$ be a group of central type of order $n^2$ where $n$ is a square-free number. Then in both of the following cases $G$ is a $\sim _{\C}$-singleton.
\begin{enumerate}
 \item If $|G'|$ is divisible by at most two primes.
 \item If $\frac{|G|}{|G'||Z(G)|}$ is square-free.
\end{enumerate}
\end{theorem}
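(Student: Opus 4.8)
The plan is to combine the structural description of groups of central type of order $n^2$ with $n$ square-free, supplied by Theorem~\ref{th:squarefreedec}, with the fact that $\sim_{\C}$ preserves all the invariants we shall need, and then to argue by the dichotomy (1)/(2). First I would unpack what $G\sim_{\C}H$ gives about $H$: applying the isomorphism $\psi$ to the trivial class yields $\C G\cong \C H$, whence $|G|=|H|=n^2$, $G/G'\cong H/H'$ (so $|G'|=|H'|$ and $\pi(|G'|)=\pi(|H'|)$), and $G,H$ have the same multiset of irreducible character degrees; moreover $\psi$ itself gives $M(G)\cong M(H)$; and since a class $[\alpha]$ is nondegenerate exactly when $\C^\alpha G$ is a matrix algebra while $\psi$ sends $\C^\alpha G$ to $\C^{\psi(\alpha)}H$, the number of nondegenerate classes is preserved. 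In particular $H$ is again of central type of order $n^2$, so Theorem~\ref{th:squarefreedec} applies to $H$ as well; both $G$ and $H$ then have abelian Sylow subgroups, and by Theorem~\ref{th:squarefreedec} one has $G'\cap Z(G)=1$, so that $G'Z(G)=G'\times Z(G)$ and $\frac{|G|}{|G'||Z(G)|}=|G:G'Z(G)|$.

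It therefore suffices to prove that, under hypothesis (1) or (2), the isomorphism type of $G$ is determined, among groups of central type of order $n^2$ ($n$ square-free) satisfying the same hypothesis, by the data $\C G$, $M(G)$ and $|Z(G)|$. For the last of these one checks that for the groups occurring in the class $|Z(G)|$ is already encoded in the first two (for instance via the character degrees together with $|G:G'|$); should this be awkward, one may instead read off $|Z(G)|$ from the Wedderburn decompositions of the \emph{non}-nondegenerate twisted algebras $\C^\alpha G$ (the degrees of the $\alpha$-projective representations are governed by the $\alpha$-regular part of the centre), which are themselves $\sim_{\C}$-invariant. Granting this, matching invariants between $G$ and $H$ produces a group isomorphism $G\cong H$, and since ordinary isomorphism refines $\sim_{\C}$ this shows that $G$ is a $\sim_{\C}$-singleton.

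For the classification step I would feed in the decomposition derived from Theorem~\ref{th:squarefreedec}, which writes $G$ as an extension of a normal Hall subgroup by a complementary abelian Hall subgroup, with all non-abelian behaviour concentrated at the primes dividing $|G'|$: the primes $p\nmid|G'|$ contribute only elementary pieces $C_p\times C_p$, forced by the classical description of abelian groups of central type as the direct squares $B\times B$. In case (1) the non-abelian part lives over a set $\sigma$ of at most two primes, so $G\cong G_\sigma\rtimes G_{\sigma'}$ with $G_{\sigma'}$ a product of groups $C_p\times C_p$ and $G_\sigma$ of central type of order $p^2$, $q^2$ or $p^2q^2$; the analysis of this bounded family is essentially the one carried out for order $p^2q^2$ in Theorem~\ref{th:p2q2}(1), now sharpened from $\sim_{\C}$-equivalence to honest isomorphism. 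In case (2) square-freeness of $|G:G'Z(G)|$ forces $G/(G'\times Z(G))$ to be cyclic of square-free order, so $G$ is obtained from the normal abelian subgroup $G'\times Z(G)$ by a cyclic action of square-free order; here one enumerates the possible actions and verifies that $\C G$, $M(G)$ and $|Z(G)|$ together determine $G'$, $Z(G)$ and the action.

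The main obstacle is the classification step, not the invariance bookkeeping. One must first check that the decomposition of Theorem~\ref{th:squarefreedec} is compatible with the invariants — that $\C G$, $M(G)$ and $|Z(G)|$ factor as products of the contributions of the individual pieces, using the K\"unneth-type formula $M(A\times B)\cong M(A)\times M(B)\times(A^{\mathrm{ab}}\otimes B^{\mathrm{ab}})$ and the corresponding factorization of twisted group algebras of direct products — and then, within each remaining bounded family, that no two non-isomorphic central-type groups of the same order share all three invariants. The delicate point is the possible non-nilpotence in case (1): there $G_\sigma$ may be a non-trivial semidirect product (for instance of the shape $(C_p\times C_p)\rtimes(C_q\times C_q)$), and one must verify directly that the invariants separate such groups and that no accidental coincidences occur.
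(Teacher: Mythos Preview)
Your proposal contains a structural misreading of Theorem~\ref{th:squarefreedec} that derails case~(1). You write that ``all non-abelian behaviour [is] concentrated at the primes dividing $|G'|$'' and that ``the primes $p\nmid|G'|$ contribute only elementary pieces $C_p\times C_p$''. This is backwards. The structure theorem gives $G\cong (C_m\times C_m)\rtimes(C_k\times C_k)$ with $G'=C_m\times C_m$; the primes dividing $|G'|$ are those dividing $m$, and the $m$-part is \emph{abelian}. The non-abelianness is produced by the primes dividing $k$, which act on $C_m\times C_m$. So your decomposition $G\cong G_\sigma\rtimes G_{\sigma'}$ with $\sigma=\pi(|G'|)$ does not isolate a ``non-abelian piece of order at most $p^2q^2$'': the complement $G_{\sigma'}\cong C_k\times C_k$ may involve arbitrarily many primes, and each one interacts non-trivially with $G'$. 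There is no reduction of case~(1) to a bounded family to which Theorem~\ref{th:p2q2}(1) applies.

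The paper's argument proceeds quite differently. It first proves (Lemma~\ref{lemma:equictgr}) that $\C G\cong\C H$ forces $G'\cong H'$ and $Z(G)\cong Z(H)$, via a direct character-degree argument: a prime $p\mid k$ fails to divide $|Z(G)|$ exactly when some irreducible degree is divisible by $p^2$. Your hand-wave ``$|Z(G)|$ is encoded in the first two invariants'' is in the right spirit, but this is the concrete mechanism. Then both cases reduce, via Remark~\ref{re:abaction}, to a single prime $p\mid k$, i.e.\ to comparing $(C_m\times C_m)\rtimes_\varphi(C_p\times C_p)$ with $(C_m\times C_m)\rtimes_\psi(C_p\times C_p)$. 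In case~(2) the hypothesis forces $p\mid|Z(G)|$, so the action factors through a single $C_p$, and one invokes the uniqueness of such $\operatorname{SL}_2(m)$-actions of order $p$ (cited from \cite{ginosargradings}); your cyclic-quotient picture is consistent with this but you never identify this uniqueness as the crux. In case~(1) one may assume, after applying case~(2), that $Z(G)=1$; then $m=q_1q_2$ is forced, and the restricted actions on the $q_i$-parts have cyclic kernels $\langle x\rangle,\langle y\rangle\leq C_p\times C_p$ which, by faithfulness, span $C_p\times C_p$. Any two such ordered pairs of distinct order-$p$ subgroups are related by an automorphism of $C_p\times C_p$, and the individual $C_p$-actions are again unique, giving $G\cong H$. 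This is the argument you are missing.
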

The case where $G$ is divisible by at most two primes is done in Proposition~\ref{prop:atmosttwoprimes}
and the case where $\frac{|G|}{|G'||Z(G)|}$ is square-free is done in Proposition~\ref{prop:cyclicaction1}.
However in Example \ref{ex:3primes} we will present two non-isomorphic groups $G$ and $H$ of order $(2\cdot 3 \cdot 5 \cdot 7)^2$ which are both of central type such that $G \sim_\mathbb{C} H$.
Here $|G'| = |H'|$ is divisible by three primes. By Theorem \ref{th:p4class} $p$-groups of central type of order at most $p^4$ are $\sim_\mathbb{C}$-singletons.
A computer aided search showed us that in fact also any group of central type of order $64$ is a $\sim_\mathbb{C}$ singleton, see Remark \ref{Order64}.
We are not aware of non-isomorphic $p$-groups $G$ and $H$ of central type such that $G \sim_\mathbb{C} H$.

There are several conditions which are necessary for $G \sim_\mathbb{C} H$ to hold. First of all since $G \sim_\mathbb{C} H$ implies an isomorphism $\psi$ between $M(G)$ and $M(H)$, a condition one might check when studying [TGRIP] is $M(G) \cong M(H)$. Furthermore, as remarked above, $\psi$ maps the identity of $M(G)$ to the identity of $M(H)$ and so $\mathbb{C}G \cong \mathbb{C} H$. Moreover $\psi$ provides a bijection between the twisted group algebras of $G$ and $H$ over $\mathbb{C}$, so that
\[\{\mathbb{C}^\alpha G \ | \ [\alpha] \in M(G)\} = \{\mathbb{C}^\beta H \ | \ [\beta] \in M(H) \}.\]
Another condition on $G$ and $H$ to fulfil $G\sim_\mathbb{C} H$ is connected to the so called Schur cover $S_G$ of $G$, often also called a representation group of $G$,
see the introduction in \cite[Kapitel V, \S \ 23]{Huppert} where also the following can be found.
$S_G$ is a group such that a group $N$ of order $|M(G)|$ is contained in $S_G' \cap Z(S_G)$ and satisfies $S_G/N \cong G$.
By the work of Schur $S_G$ always exists, although it might not be unique, and satisfies (see Theorem~\ref{SchurCover})
\[\mathbb{C}S_G \cong \oplus_{[\alpha] \in M(G)} \mathbb{C}^\alpha G. \]
So another necessary condition for $G \sim_\mathbb{C} H$ is provided by $\mathbb{C}S_G \cong \mathbb{C}S_H$. Summing up we obtain:
\begin{definition}\label{def:conditions}
We say that groups $G$ and $H$ satisfy condition A, B, C or D respectively, if
\begin{itemize}
\item[A)] $\mathbb{C}G \cong \mathbb{C}H$.
\item[B)] $M(G) \cong M(H)$.
\item[C)] $\mathbb{C}S_G \cong \mathbb{C}S_H$.
\item[D)] $\{\mathbb{C}^\alpha G \ | \ [\alpha] \in M(G)\} = \{\mathbb{C}^\beta H \ | \ [\beta] \in M(H) \}$.
\end{itemize}
\end{definition}
As stated above these conditions are necessary for $G \sim_\mathbb{C} H$.
In Section \ref{diagram} we are going to study the relations between conditions A, B, C, D and provide examples that apart from two connections between these conditions which follow from well known projective representation theory, there are no other implications between these conditions. This will lead to the following theorem.

\begin{theorem}\label{th:conditions}
Let $G$ and $H$ be finite groups and keep the notation of Definition \ref{def:conditions}. The logical connections between $\sim_\mathbb{C}$ and conditions A, B, C and D are the following.
\begin{itemize}
\item[a)] If $G \sim_\mathbb{C} H$ then $G$ and $H$ satisfy all the conditions A, B, C and D.
\item[b)] Condition $D$ implies condition A and condition C.
\item[c)] The conditions A and B combined do not imply condition C.
\item[d)] The conditions B and C combined do not imply condition A.
\item[e)] The conditions A, C and D combined do not imply condition B.
\item[f)] The conditions A, B and C combined do not imply condition D.
\end{itemize}
Summarizing: D implies A and C and there are no other logical connections betweens A, B, C and D.
\end{theorem}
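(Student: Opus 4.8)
The plan is to prove Theorem~\ref{th:conditions} by establishing the easy implications directly and then constructing explicit families of finite groups that realize each of the non-implications.

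\textbf{Part (a) and (b): the positive implications.} Part (a) is essentially already argued in the discussion preceding Definition~\ref{def:conditions}: if $G \sim_\mathbb{C} H$ via an isomorphism $\psi : M(G) \to M(H)$, then $\psi$ sends the trivial class to the trivial class, giving $\mathbb{C}G \cong \mathbb{C}H$ (condition A); $\psi$ itself is the required isomorphism of Schur multipliers (condition B); $\psi$ provides the bijection $\mathbb{C}^\alpha G \cong \mathbb{C}^{\psi(\alpha)} H$ of multisets of twisted group algebras (condition D); and summing these isomorphisms over all cohomology classes together with the identity $\mathbb{C}S_G \cong \bigoplus_{[\alpha]\in M(G)} \mathbb{C}^\alpha G$ from Theorem~\ref{SchurCover} yields $\mathbb{C}S_G \cong \mathbb{C}S_H$ (condition C). So I would just make this precise. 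For part (b): condition D gives an equality of multisets $\{\mathbb{C}^\alpha G\} = \{\mathbb{C}^\beta H\}$; the trivial cohomology class on each side contributes the semisimple algebra with the most Wedderburn components that has a one-dimensional component (indeed $\mathbb{C}^\alpha G$ has a linear character iff $[\alpha]$ is trivial, since a one-dimensional projective representation forces $\alpha$ to be a coboundary), so the untwisted group algebras are matched, giving condition A; and taking the direct sum over the whole multiset on each side and invoking Theorem~\ref{SchurCover} again gives condition C. I should be slightly careful that the matching of $\mathbb{C}G$ with $\mathbb{C}H$ inside the multiset is canonical; the characterization via existence of a linear character (equivalently, a Wedderburn component $\mathbb{C}$) does this cleanly.

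\textbf{Parts (c)--(f): the counterexamples.} For each of these I would exhibit a pair of finite groups with the asserted properties, verifying the relevant conditions by direct computation of Schur multipliers, untwisted group algebras (i.e.\ character degrees), Schur covers, and the multisets of twisted group algebras. For (c) (A and B but not C) I expect the smallest examples to live among $p$-groups of order $p^4$ or $p^5$, where two groups can share character tables and Schur multipliers but have Schur covers with different character degrees; the $p$-group tables (e.g.\ for $p=2$, groups of order $16$ and their covers of order $16\cdot|M(G)|$) can be checked by hand or cited. For (d) (B and C but not A) I want two groups with isomorphic Schur multipliers and $\mathbb{C}S_G\cong\mathbb{C}S_H$ but $\mathbb{C}G\not\cong\mathbb{C}H$; a natural source is a pair where the covers happen to coincide as algebras while the groups themselves have different character degree multisets — again small $p$-groups are the place to look. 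For (e) (A, C, D but not B), since D already implies A and C by part (b), it suffices to find $G,H$ with condition D but $M(G)\not\cong M(H)$: here one wants the \emph{multisets} of twisted group algebras to agree even though the indexing multiplier groups differ in isomorphism type but not in order (for instance $M(G)\cong C_4$ versus $M(H)\cong C_2\times C_2$) — abelian groups are promising since by Lemma~\ref{lemma:abcase} their twisted group algebras are all matrix algebras of predictable size, so two abelian groups of the same order give the same multiset of twisted group algebras while potentially having non-isomorphic (but equal-order) multipliers; I would pick the smallest such pair. For (f) (A, B, C but not D) I need two groups agreeing on all of $\mathbb{C}G$, $M(G)$ and $\mathbb{C}S_G$ but for which some twisted group algebra $\mathbb{C}^\alpha G$ has no isomorphic counterpart among the $\mathbb{C}^\beta H$; the equality of Schur covers as algebras constrains the \emph{sum} of the twisted algebras but not the individual summands, so one wants the summands to be ``repackaged'' differently — I would search among $p$-groups of order $p^4$ (using Theorem~\ref{th:p4class} and the explicit data behind it) or order $p^5$ for such a pair.

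\textbf{Main obstacle.} The positive implications are routine; the real work is producing and verifying the four counterexample pairs, and in particular \emph{certifying the negative statements} — e.g.\ that two given Schur covers are genuinely non-isomorphic as $\mathbb{C}$-algebras (a character-degree computation), or that a particular twisted group algebra of $G$ is isomorphic to no twisted group algebra of $H$ (which requires computing the full list of Wedderburn decompositions of all twisted group algebras on both sides and comparing multisets). For the smallest candidates these computations are finite and can be done by hand or with a computer algebra check; the challenge is choosing examples small enough to present cleanly yet genuinely exhibiting the asserted (non-)implications, and organizing the verification so that each of A, B, C, D is checked exactly where needed. I would therefore devote most of the section to carefully laid-out example computations, cross-referencing the explicit $p$-group data used elsewhere in the paper, and only briefly dispatch (a) and (b).
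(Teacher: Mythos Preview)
Your treatment of parts (a) and (b) matches the paper's, and your general plan for (c) and (d) --- explicit small $p$-group pairs, verified by computing character degrees, Schur multipliers, and covers --- is exactly what the paper does (it uses groups of order $16$ for (c) and order $32$ for (d)). The difficulty, as you say, is finding and certifying the examples.

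However, your proposed approach to part (e) contains a genuine error. You write that ``two abelian groups of the same order give the same multiset of twisted group algebras'' and cite Lemma~\ref{lemma:abcase} in support. This is false, and Lemma~\ref{lemma:abcase} does not say it. For an abelian group $A$ and $[\alpha]\in M(A)$, the algebra $\mathbb{C}^\alpha A$ is not a single matrix algebra but $|R|$ copies of $M_d(\mathbb{C})$, where $R$ is the radical of the alternating bicharacter attached to $\alpha$ and $|A|=|R|\,d^2$; the value of $d$ depends heavily on the order (and ``rank'') of $[\alpha]$ inside $M(A)$. For instance, $C_8\times C_8$ and $C_2\times C_2\times C_{16}$ both have Schur multiplier of order $8$, but for $C_8\times C_8$ the eight twisted algebras realize $d\in\{1,2,4,4,8,8,8,8\}$, whereas for $C_2\times C_2\times C_{16}$ every nontrivial class has order $2$ and one gets $d\in\{1,2,2,2,2,2,2,2\}$. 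So condition D fails badly, and abelian groups cannot serve as the example for (e). The paper instead constructs a common Schur cover $S$ of order $256$ and takes two carefully chosen nonabelian quotients $G,H$ of order $64$ with $M(G)\cong C_2\times C_2$ and $M(H)\cong C_4$; verifying D then uses Theorem~\ref{ProjectiveDegrees}, Theorem~\ref{SchurCover}, Lemma~\ref{lemma:cohdiv} and Theorem~\ref{OrdinaryDegreeCalculations} together with the shared cover. This is substantially more delicate than your sketch suggests.

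Your plan for part (f) also needs adjustment: searching among groups of order $p^4$ will not succeed, because Lemma~\ref{lemma:p4enough2} (equivalently Theorem~\ref{th:p2q2}(2)) shows that for non-central-type groups of order $p^4$, conditions A and B already force $G\sim_\mathbb{C} H$ and hence D; and the central-type groups of order $p^4$ have pairwise non-isomorphic multipliers. The paper's example for (f) lives in order $64=2^6$, again built from a common cover of order $256$, and the failure of D is detected by counting how many cohomology classes yield a twisted algebra that is a sum of $2\times 2$ matrix rings (one for $G$, two for $H$), via Theorem~\ref{OrdinaryDegreeCalculations}(b) applied to intermediate quotients. You should expect to go to at least order $p^5$ or $p^6$ and to exploit a shared-cover construction rather than scanning tables.
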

The positive implications of Theorem \ref{th:conditions} are summarized in Figure \ref{Diagramm}.

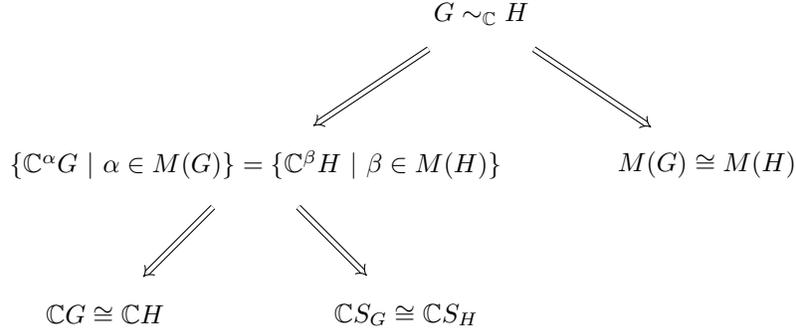
\begin{figure}[h]
\begin{center}
\begin{tikzpicture}
\tikzset{thick arc/.style={-implies,double equal sign distance, shorten <=1em, shorten >=1em, text=black}}
\tikzset{node distance=3.5cm, auto}
\node (null1)  {};
\node (relation) [right of=null1] {$G \sim_\mathbb{C} H$};
\tikzset{node distance=2cm, auto}
\node (null2) [below of=relation] {};
\tikzset{node distance=3cm, auto}
\node (projdim) [left of=null2] {$\{\mathbb{C}^\alpha G \ | \ \alpha \in M(G)\} = \{\mathbb{C}^{\beta} H \ | \ \beta \in M(H) \}$};
\node (MG) [right of=null2] {$M(G) \cong M(H)$};
\tikzset{node distance=2cm, auto}
\node (null3) [below of=projdim] {};
\node (CG) [left of=null3] {$\C G \cong \C H$};
\node (SG) [right of=null3] {$\mathbb{C}S_G \cong \mathbb{C}S_H$};

\draw[thick arc, draw=black, ] (relation) to node [near start] [left] {} (projdim);
\draw[thick arc, draw=black,] (relation) to node [near start] [left] {} (MG);
\draw[thick arc, draw=black,] (projdim) to node [near start] [right] {} (CG);
\draw[thick arc, draw=black,] (projdim) to node [near start] [right] {} (SG);

\end{tikzpicture}
\caption{Some relations connected to the twisted group ring isomorphism problem}\label{Diagramm}
\end{center}
\end{figure}

In \cite{HoffmanHumphreys} Hoffman and Humphreys constructed two non-isomorphic groups which satisfy conditions A, B and C. Moreover these group are $\sim_\mathbb{C}$-equivalent. We give many more non-isomorphic $\sim_\mathbb{C}$-equivalent groups in Theorem \ref{th:p4class} and Example \ref{ex:319}.

\section{Useful results}\label{s:pre}
In this section we collect results that we will use later to study the relation $\sim_\mathbb{C}$. We start with results about degrees of projective characters. As before $G$ will always denote a finite group.
\begin{lemma}\label{lemma:cohdiv} (\cite[Lemma 1.2(i)]{Higgs88})
The order of any cohomology class $[\alpha] \in M(G)$ divides the dimension of each $\alpha$-projective representation of $G$.
\end{lemma}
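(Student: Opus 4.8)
The plan is to reduce the statement to a fact about cocycles by taking determinants. Let $\rho$ be an $\alpha$-projective representation of $G$ of dimension $n$, so that, viewing $\rho(g) \in GL_n(\mathbb{C})$, we have $\rho(g)\rho(h) = \alpha(g,h)\rho(gh)$ for all $g,h \in G$. Applying the determinant to both sides and using that $\alpha(g,h)$ acts as a scalar yields
\[\det\rho(g)\,\det\rho(h) = \alpha(g,h)^n\,\det\rho(gh) \qquad \text{for all } g,h \in G.\]

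Next I would introduce the map $\mu\colon G \to \mathbb{C}^*$ given by $\mu(g) = \det\rho(g)$, which takes values in $\mathbb{C}^*$ because each $\rho(g)$ is invertible. The displayed identity rearranges to $\alpha(g,h)^n = \mu(g)\mu(h)\mu(gh)^{-1}$, that is, $\alpha^n$ equals the $2$-coboundary $\delta\mu$ obtained from the $1$-cochain $\mu$. Hence $\alpha^n$ is a $2$-coboundary, so $[\alpha]^n = [\alpha^n]$ is the identity element of $M(G) = H^2(G,\mathbb{C}^*)$. Therefore the order of $[\alpha]$ in $M(G)$ divides $n$, which is exactly the claim.

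There is no real obstacle in this argument; the only points requiring a moment's care are bookkeeping ones, namely the normalization conventions for cocycles and the observation that reducibility of $\rho$ plays no role, since the determinant computation is valid for an arbitrary projective representation. The same reasoning goes through over any field possessing sufficiently many roots of unity, but we only need the case $R=\mathbb{C}$, so no additional hypotheses are necessary.
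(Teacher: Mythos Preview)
Your argument is correct: the determinant trick shows that $\alpha^n$ is the coboundary of $\mu=\det\rho$, hence $[\alpha]^n=1$ in $M(G)$ and the order of $[\alpha]$ divides $n$. There is nothing to compare against here, since the paper does not supply its own proof of this lemma but simply quotes it from \cite[Lemma~1.2(i)]{Higgs88}; your proof is in fact the standard one (and essentially the argument Higgs gives).
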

Clearly, for any group $G$ the group algebra $\C G$ admits a $1$-dimensional representation. Therefore the following is clear.
\begin{corollary}\label{cor:onedimrep}
 A twisted group algebra $\mathbb{C}^\alpha G$ admits an $\alpha$-projective representation of dimension $1$ if and only if $\alpha$ is cohomologically trivial.
\end{corollary}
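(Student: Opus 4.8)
The plan is to prove the two implications separately, the easy direction being essentially a restatement of definitions and the other an immediate application of Lemma~\ref{lemma:cohdiv}.

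For the ``if'' direction, suppose $\alpha$ is cohomologically trivial, so $[\alpha]$ is the identity of $M(G)$. Since the ring structure of $\mathbb{C}^\alpha G$ depends only on the cohomology class of $\alpha$, we have a $\mathbb{C}$-algebra isomorphism $\mathbb{C}^\alpha G \cong \mathbb{C}G$, and the latter admits a one-dimensional representation (for instance the augmentation map $u_g \mapsto 1$), so the same is true of $\mathbb{C}^\alpha G$. More explicitly one can exhibit the representation directly: if $\alpha(g,h) = \mu(g)\mu(h)\mu(gh)^{-1}$ for some function $\mu \colon G \to \mathbb{C}^*$, then $g \mapsto \mu(g) \in \mathbb{C}^* = GL_1(\mathbb{C})$ is an $\alpha$-projective representation of $G$ of dimension $1$, equivalently the algebra homomorphism $\mathbb{C}^\alpha G \to \mathbb{C}$, $u_g \mapsto \mu(g)$, makes $\mathbb{C}$ into a one-dimensional $\mathbb{C}^\alpha G$-module.

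For the ``only if'' direction, suppose $\mathbb{C}^\alpha G$ admits an $\alpha$-projective representation $\rho$ of dimension $1$. By Lemma~\ref{lemma:cohdiv} the order of $[\alpha]$ in $M(G)$ divides $\dim \rho = 1$, hence $[\alpha]$ is the identity of $M(G)$, i.e. $\alpha$ is cohomologically trivial. (Alternatively, writing $\rho(g) = \mu(g) \in \mathbb{C}^*$, the defining identity $\rho(g)\rho(h) = \alpha(g,h)\rho(gh)$ shows directly that $\alpha(g,h) = \mu(g)\mu(h)\mu(gh)^{-1}$ is a $2$-coboundary.)

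The argument is elementary and I do not anticipate any real obstacle; the only points requiring a line of care are to use the correct normalization convention relating a $2$-coboundary to the function $\mu$, and to apply Lemma~\ref{lemma:cohdiv} with ``dimension of an $\alpha$-projective representation'' understood as the $\mathbb{C}$-dimension of the corresponding $\mathbb{C}^\alpha G$-module.
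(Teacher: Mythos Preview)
Your proof is correct and follows exactly the approach indicated in the paper: the ``if'' direction uses that $\mathbb{C}G$ admits a one-dimensional representation, and the ``only if'' direction is an immediate application of Lemma~\ref{lemma:cohdiv}. The alternative direct arguments via an explicit coboundary $\mu$ that you include are also fine and amount to unpacking the same facts.
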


As it is well known, a cohomology class $[\alpha] \in M(G)$ corresponds to an equivalence class of group extensions, see e.g. \cite[Chapter IV, § 3]{Brown2}. We will call this the extension corresponding to $[\alpha]$.
\begin{theorem}\label{ProjectiveDegrees}
\cite[Chapter 4, Theorem 1.3]{karpilovsky2} Let $[\alpha] \in M(G)$ be of order $n$ and let $G_\alpha$ be an extension of $G$ corresponding to $[\alpha]$. Then
\[\mathbb{C}G_\alpha\cong \oplus_{i = 1}^{n} \mathbb{C}^{\alpha^i}G. \]
\end{theorem}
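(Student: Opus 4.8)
The plan is to realize $G_\alpha$ as a \emph{finite} central extension and then decompose $\C G_\alpha$ along the characters of the central cyclic subgroup. First I would use that $[\alpha]$ has order exactly $n$ and that $\C^*$ is divisible to represent $[\alpha]$ by a $2$-cocycle taking values in the group $\langle\zeta\rangle$ of $n$-th roots of unity, with $\zeta$ primitive; the associated central extension
\[ 1 \longrightarrow \langle z\rangle \longrightarrow G_\alpha \longrightarrow G \longrightarrow 1, \qquad |\langle z\rangle| = n,\ z \leftrightarrow \zeta, \]
is an extension of $G$ corresponding to $[\alpha]$, and fixing a set-theoretic section $s\colon G\to G_\alpha$ we may assume $s(g)s(h)=\alpha(g,h)\,s(gh)$ with $\alpha(g,h)\in\langle z\rangle$. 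Since the ring structure of a twisted group algebra depends only on the cohomology class, it suffices to establish the isomorphism for this particular cocycle $\alpha$.

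Next, since $\langle z\rangle$ is central in $G_\alpha$, the subalgebra $\C\langle z\rangle\cong\C^n$ lies in the centre of $\C G_\alpha$, and its primitive idempotents $e_\chi=\frac1n\sum_{k=0}^{n-1}\chi(z)^{-k}z^k$, indexed by the $n$ characters $\chi\in\widehat{\langle z\rangle}$, are pairwise orthogonal central idempotents of $\C G_\alpha$ summing to $1$; hence $\C G_\alpha=\bigoplus_\chi e_\chi\C G_\alpha$. I would then check $z\cdot e_\chi=\chi(z)\,e_\chi$, so that $e_\chi\C G_\alpha$ is spanned by $\{e_\chi s(g)\}_{g\in G}$. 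Comparing $\dim_\C\C G_\alpha=n|G|$ with the $n$ summands, each spanned by at most $|G|$ elements, forces each $\{e_\chi s(g)\}_{g\in G}$ to be a $\C$-basis of $e_\chi\C G_\alpha$.

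The core computation is the multiplication within one block: using that $e_\chi$ is a central idempotent and that $e_\chi z^m=\chi(z^m)e_\chi$, one gets $(e_\chi s(g))(e_\chi s(h))=e_\chi\,\alpha(g,h)\,s(gh)=\chi(\alpha(g,h))\,e_\chi s(gh)$. Thus $u_g\mapsto e_\chi s(g)$ is an algebra isomorphism $\C^{\chi\circ\alpha}G\xrightarrow{\ \sim\ }e_\chi\C G_\alpha$, where $\chi\circ\alpha$ is the $\C^*$-valued $2$-cocycle $(g,h)\mapsto\chi(\alpha(g,h))$. Finally, writing $\chi=\chi_j$ with $\chi_j(z)=\zeta^j$ we have $\chi_j\circ\alpha=\alpha^j$ pointwise, hence $[\chi_j\circ\alpha]=[\alpha]^j=[\alpha^j]$ in $M(G)$; as $j$ runs over $\Z/n\Z$ (equivalently over $1,\dots,n$, since $[\alpha^n]$ is trivial) these classes are precisely $[\alpha],[\alpha^2],\dots,[\alpha^n]$. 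Collecting the blocks yields $\C G_\alpha\cong\bigoplus_{i=1}^n\C^{\alpha^i}G$.

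The only genuinely delicate point is the first one: making precise the phrase ``an extension of $G$ corresponding to $[\alpha]$'', i.e.\ that a class of order $n$ in $H^2(G,\C^*)$ is represented by a cocycle valued in $\langle\zeta\rangle$ and so gives a finite central extension with cyclic kernel of order $n$ whose factor set is cohomologous to $\alpha$; everything afterwards is a direct, if slightly bookkeeping-heavy, verification, and the dimension count conveniently replaces a hands-on linear-independence argument. Alternatively one can bypass idempotents entirely by writing down the map $\C G_\alpha\to\bigoplus_{i=1}^n\C^{\alpha^i}G$, $z\mapsto(\zeta^i)_i$, $s(g)\mapsto(u_g)_i$, checking it is a well-defined algebra homomorphism, surjective by a Vandermonde argument applied to the powers of $(\zeta^i)_i$, hence an isomorphism by equality of dimensions.
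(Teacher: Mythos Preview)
Your argument is correct and is the standard way to establish this decomposition. The paper itself does not give a proof of this theorem: it is simply quoted from Karpilovsky's book \cite[Chapter~4, Theorem~1.3]{karpilovsky2}, so there is no in-paper argument to compare against. Your route---normalising $\alpha$ to take values in $\mu_n$, building the finite central extension, and splitting $\C G_\alpha$ along the primitive central idempotents of $\C\langle z\rangle$---is essentially how the result is proved in the cited reference, and your identification $\chi_j\circ\alpha=\alpha^j$ and the dimension count are exactly the right bookkeeping. The one point you flag as delicate (that ``an extension of $G$ corresponding to $[\alpha]$'' means a central extension by $C_n$ whose factor set maps to $[\alpha]$ under an embedding $C_n\hookrightarrow\C^*$) is also the standard interpretation used in the paper, consistent with the remark preceding the theorem that cohomology classes correspond to equivalence classes of extensions.
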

The next result was already mentioned in the introduction
\begin{theorem}\label{SchurCover} \cite[Kapitel V, Satz 23.8]{Huppert} If $S_G$ is a Schur cover of $G$, then
\[\mathbb{C}S_G \cong \oplus_{\alpha \in M(G)} \mathbb{C}^\alpha G. \]
\end{theorem}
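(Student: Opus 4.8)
The plan is to decompose $\mathbb{C}S_G$ along the characters of the central subgroup furnished by the Schur cover and to identify the resulting blocks with the twisted group algebras $\mathbb{C}^\alpha G$. Recall that, by definition of a Schur cover, there is a central extension
\[1 \to N \to S_G \xrightarrow{\pi} G \to 1\]
with $N \le S_G' \cap Z(S_G)$ and $|N| = |M(G)|$; in particular $N$ is a finite abelian group. Since $N$ is central, $\mathbb{C}N$ lies in the center of $\mathbb{C}S_G$, and the elements
\[e_\lambda = \frac{1}{|N|}\sum_{n \in N} \lambda(n)^{-1}\, n \in \mathbb{C}N, \qquad \lambda \in \widehat{N} := \operatorname{Hom}(N,\mathbb{C}^*),\]
are pairwise orthogonal central idempotents with $\sum_\lambda e_\lambda = 1$, so that $\mathbb{C}S_G = \bigoplus_{\lambda \in \widehat{N}} \mathbb{C}S_G\, e_\lambda$.

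The next step is to identify each block. Fix a set-theoretic section $s \colon G \to S_G$ of $\pi$ and let $\tau \in Z^2(G,N)$ be the associated cocycle, defined by $s(g)s(h) = \tau(g,h)\, s(gh)$. For $\lambda \in \widehat{N}$ put $\alpha_\lambda := \lambda \circ \tau \in Z^2(G,\mathbb{C}^*)$. Using that $S_G = \bigsqcup_{g\in G} N s(g)$ and that $n\, e_\lambda = \lambda(n)\, e_\lambda$ for $n \in N$, one sees that $\{s(g)e_\lambda\}_{g \in G}$ is a $\mathbb{C}$-basis of $\mathbb{C}S_G e_\lambda$ (a dimension count suffices), and
\[(s(g)e_\lambda)(s(h)e_\lambda) = s(g)s(h)\, e_\lambda = \tau(g,h)\, s(gh)\, e_\lambda = \alpha_\lambda(g,h)\, s(gh)\, e_\lambda,\]
so $u_g \mapsto s(g) e_\lambda$ defines a $\mathbb{C}$-algebra isomorphism $\mathbb{C}^{\alpha_\lambda} G \xrightarrow{\ \sim\ } \mathbb{C}S_G e_\lambda$. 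This runs exactly parallel to, and generalizes, the proof of Theorem~\ref{ProjectiveDegrees}.

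It remains to show that $\lambda \mapsto [\alpha_\lambda]$ is a bijection $\widehat{N} \to M(G)$, which is the heart of the matter. It is plainly a group homomorphism, and it is precisely the transgression map in the five-term inflation–restriction exact sequence of the above extension with coefficients in $\mathbb{C}^*$; its kernel equals the image of the restriction map $\operatorname{Hom}(S_G,\mathbb{C}^*) \to \operatorname{Hom}(N,\mathbb{C}^*)$. Since $N \subseteq S_G'$, every homomorphism $S_G \to \mathbb{C}^*$ is trivial on $N$, so this restriction map is trivial and the transgression is injective. As $|\widehat{N}| = |N| = |M(G)|$, injectivity forces bijectivity. (One can avoid quoting the five-term sequence: if $\alpha_\lambda = \lambda\circ\tau$ is a coboundary, adjusting the section $s$ by a suitable function $G \to \mathbb{C}^*$ exhibits $\lambda$ as the restriction of a character of $S_G$, hence $\lambda$ is trivial on $N \subseteq S_G'$, i.e.\ $\lambda = 1$.) Combining the three steps and using that $\mathbb{C}^\alpha G$ depends only on the class $[\alpha]$,
\[\mathbb{C}S_G = \bigoplus_{\lambda \in \widehat{N}} \mathbb{C}S_G e_\lambda \;\cong\; \bigoplus_{\lambda \in \widehat{N}} \mathbb{C}^{\alpha_\lambda} G \;\cong\; \bigoplus_{[\alpha] \in M(G)} \mathbb{C}^\alpha G.\]
The only genuine obstacle is the bijectivity of the transgression: injectivity is exactly where the defining property $N \le S_G'$ of a Schur cover is used, and surjectivity then follows for free from the counting $|N| = |M(G)|$; the idempotent decomposition and the block identification are routine.
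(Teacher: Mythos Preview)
The paper does not give its own proof of this theorem; it is simply cited from Huppert \cite[Kapitel V, Satz 23.8]{Huppert}. Your argument is correct and is essentially the classical proof one finds there: decompose $\mathbb{C}S_G$ by the central idempotents of $\mathbb{C}N$, identify each block with a twisted group algebra via the section cocycle, and then use the defining properties $N\le S_G'$ and $|N|=|M(G)|$ to see that the transgression $\widehat{N}\to M(G)$ is a bijection. There is nothing to compare beyond noting that your write-up supplies the details the paper omits by citation.
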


We now collect results about cohomology of semi-direct products which will be useful for the calculations of Schur multipliers.
Let $G=N\rtimes T$ and denote by $N^*$ the group of one-dimensional linear representations of $N$. Then $T$ acts on $N^*$ by setting
\begin{equation}\label{eq:diagaction}
\chi^t(n) = \chi(n^{t^{-1}}) \quad \text{for} \quad n \in N, \chi \in N^*, t \in T.
\end{equation}
This action is sometimes called diagonal action. Also, the action of $T$ on $N$ induces a natural action of $T$ on $M(N)$ (see \cite[\S\ 3.3]{ginosargradings}).
With respect to this action, denote by $M(N)^T$ the $T$-invariant cohomology classes in $M(N)$.
\begin{lemma}\label{eq:cohosemi}(\cite[Corollary 2.2.6]{karpilovsky1})
With the above notations, if the cardinalities of $N$ and $T$ are coprime then
\begin{equation*}
 M(G) = M(N)^T \times M(T).
\end{equation*}
\end{lemma}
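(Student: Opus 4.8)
The plan is to identify $M(G)$ with the middle term of a split short exact sequence
\[
0 \longrightarrow M(T) \xrightarrow{\ \mathrm{inf}\ } M(G) \xrightarrow{\ \mathrm{res}\ } M(N)^T \longrightarrow 0,
\]
where $\mathrm{inf}$ is inflation along the quotient map $\pi\colon G\to T$ and $\mathrm{res}$ is restriction to $N$; here and below $M(-)=H^2(-,\C^*)$, and I will repeatedly use that for a finite group $K$ the groups $H^i(K,\C^*)$ are finite of exponent dividing $|K|$ for $i\ge 1$ (for $i=1$ this is just $\mathrm{Hom}(K,\C^*)$, of order dividing $|K|$).

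First I would dispatch the formal half. Since $G=N\rtimes T$, the inclusion $s\colon T\hookrightarrow G$ is a section of $\pi$, so $s^*\circ\mathrm{inf}=(\pi s)^*=\mathrm{id}_{M(T)}$; thus $\mathrm{inf}$ is injective and, once the sequence is known to be exact, $s^*$ splits it. Moreover the image of $\mathrm{res}\colon M(G)\to M(N)$ lies in $M(N)^T$: the conjugation action of $G$ on $H^\bullet(N,\C^*)$ is trivial on $N$ and hence factors through $T$, while $\mathrm{res}(y)$ is fixed by every inner automorphism of $G$, hence by all of $T$.

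Next I would show $\mathrm{res}\colon M(G)\to M(N)^T$ is surjective by a corestriction argument. By the Mackey formula applied to the normal subgroup $N$, whose double cosets $N\backslash G/N$ are exactly the cosets comprising $T$, one has $\mathrm{res}\circ\mathrm{cor}(x)=\sum_{t\in T}t\cdot x$ for $x\in M(N)$, which on $M(N)^T$ is multiplication by $|T|$. Since $\gcd(|N|,|T|)=1$ and $M(N)$ has exponent dividing $|N|$, multiplication by $|T|$ is invertible on $M(N)^T$, so $\mathrm{res}$ hits all of $M(N)^T$. Finally, to get exactness in the middle, i.e. $\ker\mathrm{res}=\mathrm{im}\,\mathrm{inf}$, I would invoke the seven-term inflation–restriction exact sequence of $1\to N\to G\to T\to 1$ with coefficients in $\C^*$, whose relevant segment reads
\[
H^2(T,\C^*) \xrightarrow{\ \mathrm{inf}\ } \ker\big(\mathrm{res}\colon M(G)\to M(N)\big) \longrightarrow H^1\big(T,H^1(N,\C^*)\big),
\]
and observe that the last term vanishes: $H^1(N,\C^*)=\mathrm{Hom}(N,\C^*)$ has exponent dividing $|N|$ while $H^1(T,-)$ is killed by $|T|$, and $\gcd(|N|,|T|)=1$. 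Hence $\mathrm{inf}$ is onto $\ker\mathrm{res}$, and combined with its injectivity we get $\mathrm{inf}\colon M(T)\xrightarrow{\ \sim\ }\ker\mathrm{res}$. Putting the three facts together yields the displayed split exact sequence and therefore $M(G)\cong M(T)\times M(N)^T$. (Alternatively the same conclusion drops out of the Lyndon–Hochschild–Serre spectral sequence $E_2^{p,q}=H^p(T,H^q(N,\C^*))\Rightarrow H^{p+q}(G,\C^*)$: coprimality forces $E_2^{p,q}=0$ for $p,q\ge 1$, so on the line $p+q=2$ only $E_2^{2,0}=M(T)$ and $E_2^{0,2}=M(N)^T$ remain, and the surjectivity of $\mathrm{res}$ together with the injectivity of $\mathrm{inf}$ shows both survive to $E_\infty$.)

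The only genuine input is the vanishing of the cross term $H^1(T,\mathrm{Hom}(N,\C^*))$ — equivalently, the collapse of all higher differentials touching $E_r^{0,2}$ and $E_r^{2,0}$ in the spectral sequence — and this is precisely where the hypothesis $\gcd(|N|,|T|)=1$ (together with the divisibility of $\C^*$, which controls the exponent of $M(N)$) is used; the rest is routine formalism with inflation, restriction and corestriction. So I do not expect a serious obstacle beyond keeping the order/coprimality bookkeeping straight.
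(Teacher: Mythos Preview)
Your argument is correct. The paper does not supply its own proof of this lemma: it simply cites Karpilovsky's book, so there is no ``paper's approach'' to compare against beyond noting that your inflation--restriction/LHS argument is precisely the standard derivation underlying the cited result. One small observation: the same conclusion also falls out immediately from Tahara's theorem (stated just after this lemma in the paper) once one notes that the coprimality hypothesis kills both $H^1(T,N^*)$ and $H^2(T,N^*)$, since $N^*=\mathrm{Hom}(N,\C^*)$ has exponent dividing $|N|$ while $H^i(T,-)$ is annihilated by $|T|$; this collapses the exact sequence there to $\tilde{M}(G)\cong M(N)^T$. That deduction and your spectral-sequence/seven-term argument are really the same computation viewed from two angles.
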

The action of $T$ on $N$ induces also a map $d: M(N)^T \rightarrow H^2(T, N^*)$ which is defined as follows.
For $[\alpha] \in M(N)^T$ there is a map $u: T \times N \rightarrow \mathbb{C}^*$ such that $u(1,n) = u(t,1) = 1$ for all $n \in N, t \in T$ and moreover
\[\alpha(n, n')\alpha(n^t, n'^t)^{-1} = u(t,n)u(t,n')u(t,nn')^{-1} \ \ \text{for} \ \ n,n ' \in N, \ t \in T. \]
Then
\[\alpha_*(t,t')(n) = u(t', n)u(tt', n)^{-1}u(t,n^{t'}) \ \ \text{for} \ \ n \in N, \ t, t' \in T \]
is a $2$-cocycle in $Z^2(T, N^*)$ and $d([\alpha])$ is the comohology class of $\alpha_*$.
\begin{theorem}\label{TaharaSchurMultiplierCalculation} (Tahara '72, see \cite[Theorem 2.2.5]{karpilovsky1})
Let $G$ be a finite group such that $G$ is a semidirect product of the form $G = N \rtimes T$.  Then $M(G) \cong \tilde{M}(G) \times M(T)$ and there is an exact sequence
\[1 \rightarrow H^1(T, N^*) \rightarrow \tilde{M}(G) \rightarrow M(N)^T \xrightarrow{d} H^2(T, N^*), \]
where  $\tilde{M}(G)$ denotes the kernel of the natural map $M(G) \rightarrow M(T)$.
\end{theorem}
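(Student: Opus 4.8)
The plan is to run the Lyndon--Hochschild--Serre spectral sequence of the extension $1 \to N \to G \xrightarrow{\pi} T \to 1$ with coefficients in the trivial module $\mathbb{C}^*$, and to exploit that the extension splits via the inclusion $\iota\colon T \hookrightarrow G$. This first-quadrant spectral sequence has $E_2^{p,q} = H^p(T, H^q(N, \mathbb{C}^*)) \Rightarrow H^{p+q}(G, \mathbb{C}^*)$, with $T$ acting on $H^q(N,\mathbb{C}^*)$ through its action on $N$, and it converges to a filtration $M(G) = F^0 \supseteq F^1 \supseteq F^2 \supseteq F^3 = 0$ with $F^i/F^{i+1} \cong E_\infty^{i, 2-i}$. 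Since $H^0(N,\mathbb{C}^*) = \mathbb{C}^*$ and $H^1(N,\mathbb{C}^*) = N^*$ with exactly the diagonal action \eqref{eq:diagaction}, the three entries on the antidiagonal $p+q=2$ are
\[ E_2^{0,2} = M(N)^T, \qquad E_2^{1,1} = H^1(T, N^*), \qquad E_2^{2,0} = M(T). \]

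First I would record the consequences of the splitting. Because $\pi\iota = \mathrm{id}_T$, inflation $\pi^*\colon H^n(T,\mathbb{C}^*) \to H^n(G,\mathbb{C}^*)$ is split injective with left inverse the restriction $\iota^*$. Since the bottom-row edge map $E_2^{n,0} \twoheadrightarrow E_\infty^{n,0} \hookrightarrow H^n(G,\mathbb{C}^*)$ is precisely inflation, split injectivity forces $E_\infty^{n,0} = E_2^{n,0}$, i.e.\ every differential landing in the bottom row vanishes; in particular $d_2\colon E_2^{1,1}\to E_2^{3,0}$ and $d_3\colon E_3^{0,2}\to E_3^{3,0}$ are zero. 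Together with the automatic vanishing of the remaining differentials that could touch the spots $(0,2)$ and $(1,1)$ (all of which originate in columns $p<0$ or land in rows $q<0$), this gives $E_\infty^{1,1} = H^1(T, N^*)$ and $E_\infty^{0,2} = \ker\!\bigl(d_2\colon E_2^{0,2} \to E_2^{2,1}\bigr)$, where $E_2^{2,1} = H^2(T, N^*)$.

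Next I would read off the two assertions from the filtration. From $F^2 \cong E_\infty^{2,0} = M(T)$ and the fact that $\iota^*$ splits inflation, $F^2$ is a direct complement in $M(G)$ to $\tilde M(G) := \ker(\iota^*\colon M(G)\to M(T))$, which is the kernel of the natural map in the statement; hence $M(G) \cong \tilde M(G) \times M(T)$. Intersecting the filtration with $\tilde M(G)$ and using $F^1 = F^2 \oplus (F^1\cap\tilde M(G))$ together with $F^1/F^2 \cong E_\infty^{1,1}$ and $F^0/F^1 \cong E_\infty^{0,2}$, a short diagram chase yields a short exact sequence $1 \to E_\infty^{1,1} \to \tilde M(G) \to E_\infty^{0,2} \to 1$. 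Substituting the identifications of the previous paragraph turns this into the exact sequence in the statement, $1 \to H^1(T, N^*) \to \tilde M(G) \to M(N)^T \xrightarrow{d} H^2(T, N^*)$, with $d$ equal to $d_2$.

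The only genuinely computational point, and the one I expect to be the main obstacle, is to check that the differential $d_2\colon E_2^{0,2} \to E_2^{2,1}$ coincides with the explicit map $d$ described before the statement in terms of the cochains $u\colon T\times N\to\mathbb{C}^*$ normalised by $u(1,n)=u(t,1)=1$. This is the standard but fiddly unwinding of the definition of $d_2$ on the double complex computing $H^\ast(G,\mathbb{C}^*)$; alternatively one can bypass the spectral sequence entirely and follow Tahara's original argument, restricting a normalised $2$-cocycle of $G$ to $N\times N$ and to $T\times T$, using the section to straighten the mixed terms, and identifying the resulting obstruction class in $H^2(T,N^*)$ — in either approach the care lies entirely in the bookkeeping with the diagonal action and the normalisation conventions.
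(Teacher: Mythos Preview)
The paper does not supply its own proof of this theorem: it is quoted as a known result of Tahara (1972), with a reference to Karpilovsky's monograph, and is used as a black box in the examples of \S\ref{diagram}. So there is no ``paper's own proof'' to compare against.

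That said, your argument is correct. Running the Lyndon--Hochschild--Serre spectral sequence for $1\to N\to G\to T\to 1$ with coefficients $\mathbb{C}^*$, using the section to kill all differentials hitting the bottom row, and then reading off the filtration on $H^2$ is the standard modern derivation of exactly this five-term-type sequence; your handling of the splitting $M(G)\cong \tilde M(G)\times M(T)$ and of the filtration pieces $E_\infty^{1,1}$, $E_\infty^{0,2}$ is clean. Tahara's original argument (and Karpilovsky's exposition) is more hands-on: one decomposes a normalised $2$-cocycle $\alpha$ on $G=N\rtimes T$ according to the product decomposition of $G\times G$, uses the section to normalise the mixed pieces, and tracks the resulting cochain identities directly --- essentially the ``bypass'' you sketch in your final paragraph. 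The spectral-sequence route is conceptually cleaner and makes the role of the differential $d_2$ transparent; the cocycle route has the advantage that it produces the explicit formula for $d$ in terms of $u$ and $\alpha_*$ (the one the paper actually uses in Examples~\ref{Ex2}--\ref{Ex4}) without a separate unwinding step. Your honest flag that identifying $d_2$ with the paper's explicit $d$ is the one genuinely computational point is accurate; for the purposes of this paper that identification is exactly what is needed, since the applications rely on evaluating $d$ via the cochain $u$.
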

For the next theorem we use the following setup.
Let $G$ be a finite group and let $M$ be a $G$-module.
Consider the trace map $\operatorname{Tr}:M\rightarrow M$ defined by $m\mapsto \sum_{g\in G} m^g$ and denote by $M^G$ the elements in $M$ which are invariant with respect to the $G$-action.
Then, by definition
$$H^0(G,M)\cong M^G/\text{Im}(Tr(M)).$$
Now, since cyclic groups have a periodic cohomology with period 2 (see e.g \cite[Chapter 6, Section 9]{Brown2}), for cyclic $G$
$$H^2(G,M)\cong M^G/\text{Im}(Tr(M)).$$

\begin{theorem}\label{H1H2TN*}
Let $T$ be a group acting on another group $N$.
\begin{enumerate}
 \item \cite[Kapitel I, Satz 17.3]{Huppert} The order of $H^1(T, N^*)$ is the number of conjugacy classes of complements of $N^*$ in $N^* \rtimes T$.
 \item For $\chi \in N^*$ set $\operatorname{Tr}(\chi) = \sum_{t \in T} \chi^t.$ If $T$ is cyclic then
\[H^2(T, N^*) \cong (N^*)^T/\operatorname{Im}(\operatorname{Tr}(N^*)).\]
\end{enumerate}
\end{theorem}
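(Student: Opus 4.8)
Part (1) is quoted verbatim from \cite[Kapitel I, Satz 17.3]{Huppert}, so there is nothing to prove there; the plan addresses only part (2). The idea is simply to recognize the asserted isomorphism as the standard description of the second cohomology of a cyclic group applied to the particular $T$-module $M = N^*$, exploiting exactly the $2$-periodicity remark recorded immediately before the statement.

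First I would make the module structure precise. The group $N^* = \operatorname{Hom}(N,\C^*)$ of one-dimensional representations of $N$ is a finite abelian group, and formula \eqref{eq:diagaction} defines an action of $T$ on it; a one-line check gives $(\chi^t)^s = \chi^{ts}$, so this is a genuine left action and turns $N^*$ into a $\Z T$-module. Writing the group operation of $N^*$ multiplicatively, the norm element $\nu = \sum_{t \in T} t \in \Z T$ then acts by $\nu\cdot\chi = \prod_{t\in T}\chi^t$, which is precisely the map $\operatorname{Tr}$ of the statement (the additive notation in the statement being a harmless abuse), and $(N^*)^T$ is by definition the subgroup of $T$-fixed points.

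Next, assuming $T = \langle t\rangle$ is cyclic, I would invoke the standard $2$-periodic free resolution of the trivial module over $\Z T$,
\[ \cdots \longrightarrow \Z T \xrightarrow{\ \nu\ } \Z T \xrightarrow{\ t-1\ } \Z T \xrightarrow{\ \nu\ } \Z T \xrightarrow{\ t-1\ } \Z T \longrightarrow \Z \longrightarrow 0, \]
see \cite[Chapter VI, \S 9]{Brown2}. Applying $\operatorname{Hom}_{\Z T}(-,N^*)$ produces the cochain complex
\[ N^* \xrightarrow{\ t-1\ } N^* \xrightarrow{\ \operatorname{Tr}\ } N^* \xrightarrow{\ t-1\ } N^* \xrightarrow{\ \operatorname{Tr}\ } \cdots, \]
whose cohomology in degree $2$ is $\ker(t-1\colon N^*\to N^*)/\operatorname{Im}(\operatorname{Tr})$. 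Since $\ker(t-1) = (N^*)^T$, this yields $H^2(T,N^*)\cong (N^*)^T/\operatorname{Im}(\operatorname{Tr}(N^*))$, as claimed; alternatively one may just quote the displayed formula $H^2(G,M)\cong M^G/\operatorname{Im}(\operatorname{Tr}(M))$ for cyclic $G$ stated just above the theorem, specialized to $G=T$, $M=N^*$.

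I do not expect a genuine obstacle here: the homological content is entirely standard for cyclic groups. The only point demanding care is the bookkeeping — confirming that the diagonal action \eqref{eq:diagaction} is indeed the module structure under consideration, reconciling the multiplicative notation for $N^*$ with the additive conventions of the cohomological formalism, and verifying that the norm element of $\Z T$ matches, under this dictionary, exactly the trace map $\operatorname{Tr}$ appearing in the statement.
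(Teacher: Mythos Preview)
Your proposal is correct and follows essentially the same route as the paper: the paper's proof of part (2) is the paragraph immediately preceding the theorem, which records the Tate-cohomology description $\hat H^0(G,M)\cong M^G/\operatorname{Im}(\operatorname{Tr})$ and then appeals to the $2$-periodicity of cyclic-group cohomology from \cite[Chapter VI, \S 9]{Brown2} to identify this with $H^2(G,M)$, specialized to $G=T$ and $M=N^*$. Your explicit use of the periodic free resolution is just an unpacking of that same periodicity argument, and you even note the shortcut of quoting the displayed formula directly.
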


We will also use the following results on degrees of ordinary characters.
\begin{theorem}\label{OrdinaryDegreeCalculations} Let $G$ be a finite $p$-group.
\begin{enumerate}
\item[a)] \cite[Theorem 3.6]{Isaacs} If $\chi$ is an irreducible complex character of $G$ then $\chi(1)^2$ divides $|G:Z(G)|$.
\item[b)] \cite[Theorem 12.11]{Isaacs} The degrees of irreducible complex characters of $G$ are exactly $1$ and $p$ if and only if $G$ possesses an abelian normal subgroup $A$ such that $|G:A| = p$ or $|G:Z(G)| = p^3$.
\end{enumerate}
\end{theorem}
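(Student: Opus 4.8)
The plan is to read off (a) from the orthogonality relations, split (b) into its two directions, dispatch the reverse one using (a) together with It\^o's theorem, and identify the forward one as the real work. Throughout write $\operatorname{cd}(G)$ for the set of degrees of the irreducible complex characters of $G$.

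\emph{Part (a).} I would fix $\chi\in\operatorname{Irr}(G)$ and use that every $z\in Z(G)$ acts as a scalar on a representation affording $\chi$, so $|\chi(z)|=\chi(1)$. The first orthogonality relation gives $\sum_{g\in G}|\chi(g)|^{2}=|G|$, and keeping only the (non-negative) contributions of the centre yields $|Z(G)|\,\chi(1)^{2}\le|G|$, i.e.\ $\chi(1)^{2}\le|G:Z(G)|$. Since $G$ is a $p$-group, both sides are powers of $p$, and for powers of $p$ the inequality $a\le b$ is the divisibility $a\mid b$; hence $\chi(1)^{2}\mid|G:Z(G)|$.

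\emph{Part (b), the direction ``$\Leftarrow$''.} If $A\trianglelefteq G$ is abelian with $|G:A|=p$, It\^o's theorem (every irreducible degree divides the index of an abelian normal subgroup) gives $\chi(1)\mid p$ for all $\chi\in\operatorname{Irr}(G)$; if instead $|G:Z(G)|=p^{3}$, part (a) gives $\chi(1)^{2}\mid p^{3}$. Either way $\operatorname{cd}(G)\subseteq\{1,p\}$, and since $G$ is non-abelian (automatic in the second case, and the case of interest in the first) the degree $p$ occurs, so $\operatorname{cd}(G)=\{1,p\}$.

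\emph{Part (b), the direction ``$\Rightarrow$'' --- the main obstacle.} Here I assume $\operatorname{cd}(G)=\{1,p\}$ and that $G$ has no abelian subgroup of index $p$, and I must deduce $|G:Z(G)|=p^{3}$. That $|G:Z(G)|\ge p^{3}$ is already clear --- $|G:Z(G)|=p^{2}$ would make $G/Z(G)\cong C_{p}\times C_{p}$ and the preimage of a subgroup of order $p$ abelian of index $p$, while $|G:Z(G)|\le p$ would make $G$ abelian --- so the content is the upper bound. The opening moves: a $p$-group is monomial, so (Taketa) $G$ is metabelian and every non-linear $\chi\in\operatorname{Irr}(G)$ is induced from a linear character of a subgroup of index $p$, which is therefore non-abelian; and the common kernel $K=\bigcap_{\chi(1)=p}\ker\chi$ satisfies $K\cap G'=1$, whence $[K,G]\le K\cap G'=1$, so $K\le Z(G)$. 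Since $Z(G/K)=Z(G)/K$ and $G/K$ has the same degree set and also has no abelian subgroup of index $p$, one may replace $G$ by $G/K$ and so assume $\bigcap_{\chi(1)=p}\ker\chi=1$. After these reductions the substantive step is to combine the degree bound $\chi(1)^{2}\le|G:Z(\chi)|$ of part (a), the counting identity $|G|=|G:G'|+p^{2}\cdot\#\{\chi\in\operatorname{Irr}(G):\chi(1)=p\}$, and the restrictions that monomiality and the absence of an abelian subgroup of index $p$ place on the subgroups of index $p$ and on the structure of $G/Z(G)$ and $G'$, in an inductive argument on $|G|$ that forces $|G:Z(G)|\le p^{3}$. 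Carrying this out --- in effect, showing that once an abelian subgroup of index $p$ is excluded, the degree set $\{1,p\}$ leaves $|G:Z(G)|$ no room beyond $p^{3}$ --- is the genuine difficulty, and it is precisely the content of Isaacs' Theorem~12.11, which I would invoke here rather than reconstruct in full.
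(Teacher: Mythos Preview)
The paper does not prove this theorem; it is recorded in Section~\ref{s:pre} (``Useful results'') purely as a citation to Isaacs, with no argument supplied. So there is no proof in the paper against which to compare your proposal --- the paper's ``proof'' is exactly the citation you end up invoking.

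That said, what you have written is correct as far as it goes and in fact supplies more than the paper does. Your argument for (a) via orthogonality and the scalar action of the centre is the standard one, and the passage from inequality to divisibility using that both sides are $p$-powers is precisely the point. Your reverse direction for (b) via It\^o's theorem and part (a) is also correct. For the forward direction your opening reductions (monomiality, Taketa giving metabelian, the observation that $K = \bigcap_{\chi(1)=p}\ker\chi$ satisfies $K\cap G' = 1$ and hence $K\le Z(G)$) are fine; one small caution is that the equality $Z(G/K)=Z(G)/K$ does not follow merely from $K\le Z(G)$ and would need separate justification in a full proof. But since you explicitly hand the remaining work back to Isaacs' Theorem~12.11 --- which is exactly what the paper does by citing it --- this is moot for the comparison.
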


A few times we will use the notion of $\alpha$-regularity of elements in $G$ with respect to $\alpha \in Z^2(G,\C ^*)$ and its connection
to the center $\C ^{\alpha} G$.
\begin{definition}\label{def:f-reg}
Let $G$ be a finite group and let $\alpha \in Z^2(G,\C)$.
An element $g\in G$ is $\alpha$-regular if for any $h\in C_G(g)$, $[u_g,u_h]=1$ in $\C ^{\alpha}G$.
\end{definition}
It is easy to show that $\alpha$-regularity is a class property, both conjugacy and cohomology.
Let $\C ^{\alpha}G$ be a twisted group algebra, let $x\in G$ be $\alpha$-regular and fix a left transversal $T$
of $C_G(x)$ in $G$. Then,
\begin{equation*}
S_{(1 ,x)}=\sum_{t\in T} u_{t}u_xu_{t}^{-1}
\end{equation*}
is central in $\C ^{\alpha}G$ (see \cite[Proposition 6,2]{ginosar2012semi}).
By \cite[Theorem 2.4]{Oystaeyen} the center of the twisted group algebra $\C ^{\alpha}G$ has a basis consisting of
the elements $S_{(1 ,x)}$ where $[x]$ runs over the $\alpha$-regular conjugacy classes in $G$. In particular, if $\alpha$ is cohomologically trivial, then all the elements in $G$ are $\alpha$-regular.

\section{Examples and conditions}\label{diagram}
In this part we present some preliminary results on [TGRIP] over the complex numbers
and study the connection of the different conditions A, B, C and D following from $G \sim_\mathbb{C} H$ introduced in Definition \ref{def:conditions}. Put together these examples imply Theorem \ref{th:conditions}.

First we want to show that unlike the ordinary group algebra case, here the structure of an abelian group $A$ is determined by
the complex twisted group algebras over $A$, i.e. we give a proof of Lemma \ref{lemma:abcase}.
Properties of cocycles and cohomology classes can be expressed in terms of the
associated twisted group algebras. This is essentially useful in the investigation of cohomology of abelian groups.
In this case, when given an abelian group $G$ and a cohomology class $[\alpha]\in M(G)$ with an associated twisted group algebra
$\mathbb{C}^\alpha G$ spanned by $\{u_g\}_{g\in G}$ then $[\alpha]$ is uniquely determined by the values in $\mathbb{C}^*$ of all the commutators
$[u_{g_i},u_{g_j}]$, $g_i,g_j \in G$. Clearly, the converse is also true, that is the commutator relations are uniquely determined by $[\alpha]$.
In fact, if
\begin{equation}\label{eq:decompofabeliangroupptok}
A=C_{n_1}\times C_{n_2}\times \ldots \times C_{n_k}\cong \langle x_1 \rangle \times \langle x_2 \rangle\times \ldots \times \langle x_k \rangle
\end{equation}
such that $n_i$ is a divisor of $n_{i+1}$ for any $1\leq i\leq k-1$, then $M(A)$ is an abelian group of rank
$\frac{k(k-1)}{2}$
generated by the tuple of functions
$$\left(\alpha _{ij}\right)_{1\leq i<j\leq k},$$
where $\alpha _{ij}(x_i,x_j)$ is an $n_i$-th root of unity and $\alpha_{ij}$ has the value 1 on the other generators of $A$ elsewhere (see \cite{Schur}).
As a consequence of the above discussion non-isomorphic abelian groups of the same cardinality admit non-isomorphic cohomology groups.
This completes the proof of Lemma \ref{lemma:abcase}.

From the above discussion the following is clear.
\begin{corollary}\label{cor:SchurofCn}
 Let $n$ be a natural number. The Schur multiplier $M(C_n)$ is trivial and $M(C_n\times C_n)\cong C_n$.
\end{corollary}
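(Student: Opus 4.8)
The plan is to read both statements off directly from the structural description of $M(A)$ for abelian $A$ recalled in the paragraph above, specialized to $k=1$ and $k=2$ in the decomposition \eqref{eq:decompofabeliangroupptok}.

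For the first assertion I would take $A = C_n$, so that $k = 1$. The rank of $M(A)$ is then $\frac{k(k-1)}{2} = 0$, so $M(C_n)$ is the trivial group. (If one prefers an independent check, $H^2(C_n,\C^*) \cong \C^*/(\C^*)^n$ is trivial because $\C^*$ is divisible, but it is cleaner to stay inside the framework just set up.) For the second assertion I would take $A = C_n \times C_n \cong \langle x_1 \rangle \times \langle x_2 \rangle$, so that $k = 2$ and $n_1 = n_2 = n$. The rank of $M(A)$ is $\frac{2 \cdot 1}{2} = 1$, hence $M(C_n \times C_n)$ is cyclic, generated by the single function $\alpha_{12}$ for which $\alpha_{12}(x_1,x_2)$ is an $n_1 = n$-th root of unity and which is $1$ on the remaining generators; in particular its order divides $n$.

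The one point that deserves a line of care is that this cyclic generator has order exactly $n$ and not a proper divisor. For this I would choose the representative with $\alpha_{12}(x_1,x_2) = \zeta$ a primitive $n$-th root of unity and pass to the associated twisted group algebra $\C^{\alpha_{12}}(C_n\times C_n)$ spanned by $\{u_g\}_{g \in G}$: there the commutator $[u_{x_1},u_{x_2}]$ equals $\zeta$ up to inversion, hence is again a primitive $n$-th root of unity, and the same computation for $\alpha_{12}^k$ gives $\zeta^k$. Since, as recalled in the preceding discussion, the cohomology class of a cocycle on an abelian group is determined by these commutator values, $\alpha_{12}^k$ is cohomologically nontrivial for $1 \le k < n$. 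Therefore $M(C_n \times C_n) \cong C_n$.

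I do not anticipate any real obstacle: the corollary is an immediate specialization of the structure theorem for Schur multipliers of abelian groups already invoked, and the only non-formal ingredient is the commutator bookkeeping used to pin down the order of the generator of $M(C_n\times C_n)$.
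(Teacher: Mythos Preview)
Your proposal is correct and follows essentially the same approach as the paper: the corollary is stated immediately after the structural description of $M(A)$ for abelian $A$ with the remark that it is ``clear'' from that discussion, and you simply specialize to $k=1$ and $k=2$ exactly as intended. Your extra paragraph verifying that the generator $\alpha_{12}$ has order exactly $n$ (via the commutator values $[u_{x_1},u_{x_2}]$) is more detail than the paper supplies, but it is in the same spirit and uses the same commutator bookkeeping already introduced in the preceding paragraph.
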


Lemma \ref{lemma:abcase} emphasizes the difference between the regular problem [GRIP] and the twisted problem [TGRIP].
Indeed, with respect to $\sim_{\C}$ all the abelian groups are singletons while all the abelian groups of the same cardinality belong
to the same equivalence class with respect to $\Delta_{\C}$.
Lemma \ref{lemma:abcase} does however not capture the full complexity of [TGRIP].
Indeed, in order to show that abelian groups are singletons in $\sim_{\C}$ all we did is to show that abelian groups of the same order admit non-isomorphic cohomology groups. In order to better emphasize the complexity of [TGRIP] we will study the relations between conditions A, B, C and D, defined in Definition \ref{def:conditions}, in several examples and establish their interaction described in Theorem \ref{th:conditions}.

By Corollary~\ref{cor:onedimrep}, condition D implies condition A. Moreover by Theorem \ref{SchurCover} we know that condition D implies condition C. We give several examples showing the independence of the other properties.

\begin{example}\label{Ex1} We provide an example that conditions A and B combined do not imply C. \\
Let $G \cong Q_8 \times C_2$ and
\[H \cong (C_4 \times C_2) \rtimes C_2  = (\langle a \rangle \times \langle b \rangle) \rtimes \langle c \rangle, \ \ \text{where} \ \ a^c = ab, \ b^c = b.\]
The group $H$ is of central type, while $G$ is not \cite[Theorem 3.9, Lemma 3.10]{Schnabelnew}, so C does not hold. Since factoring out the central involution of the quaternion factor of $G$ we obtain an abelian group and also $H/\langle b \rangle$ is abelian, we have $|G/G'| = |H/H'| = 8,$ so $\mathbb{C}G$ and $\mathbb{C}H$ have exactly $8$ direct summands of dimension $1$. This implies
\[\mathbb{C}G \cong \mathbb{C}H \cong 8\mathbb{C} \oplus 2\mathbb{C}^{2 \times 2}, \]
since the order of $G$ and $H$ is just $16$. So $G$ and $H$ satisfy condition A.

Moreover the Schur multipliers of groups of order $16$ are well known (see e.g. \cite[Table I]{MR913203}), in particular
\[M(G) \cong M(H) \cong C_2 \times C_2, \]
so $G$ and $H$ satisfy condition B.
\end{example}

\begin{example}\label{Ex2} We next provide an example that B and C combined do not imply A.\\
Let
\[S \cong (C_8 \rtimes C_4) \rtimes C_2 = (\langle x \rangle \rtimes \langle y \rangle) \rtimes \langle z \rangle, \ \text{where} \ x^y = x^5, \ x^z = xy^3, \ y^z = y^3.\]
Then $Z(S) = \langle x^4, y^2 \rangle$, while $S' = \langle x^4, y \rangle$.
Set
\[G = S/\langle y^2 \rangle \cong (C_8 \rtimes C_2) \rtimes C_2 = (\langle a\rangle \rtimes \langle b\rangle ) \rtimes \langle c \rangle, \ \text{with} \ a^b = a^5, \ b^c = b, \ a^c = ab \]
where $a$, $b$ and $c$ denote the images of $x$, $y$ and $z$ respectively. Moreover set
\[H = S/\langle x^4 \rangle \cong (C_4 \times C_4) \rtimes C_2 = (\langle r \rangle \times \langle s \rangle ) \rtimes \langle t \rangle, \ \text{with} \ r^t = rs^3, \ s^t = s^3, \]
where $r$, $s$ and $t$ denote the images of $x$, $y$ and $z$ respectively.

\begin{itemize}[leftmargin=5pt]
\item Condition B holds: Namely we will show that
\[M(G) \cong M(H) \cong C_2\]
using Theorem \ref{TaharaSchurMultiplierCalculation}. We start with $G$. Set $N = \langle a, b \rangle$ and $T = \langle c \rangle$.
Then by Corollary~\ref{cor:SchurofCn}, $M(T) = 1$. By \cite[Table 1]{MR913203} $M(N) = 1$ and therefore , also $M(N)^T$ is trivial.
So to apply Theorem~\ref{TaharaSchurMultiplierCalculation} it will be enough to show that $H^1(T, N^*)$ has order $2$,
where $N^*$ denotes the abelian group of $1$-dimensional complex characters of $N$ on which $T$ acts as in~\eqref{eq:diagaction}.
By Theorem \ref{H1H2TN*} this is equivalent to showing that up to conjugation $N^*$ has exactly two complements in $N^* \rtimes \langle c \rangle$.  We have
\[N^* \rtimes \langle c \rangle \cong (C_4 \times C_2) \rtimes \langle c \rangle = (\langle \sigma \rangle \times \langle \tau \rangle) \rtimes \langle c \rangle \ \text{where} \ \sigma^c = \sigma, \ \tau^c = \sigma^2 \tau.\]
Here $\sigma$ corresponds to the character mapping $a$ to $i$ and $b$ to $1$, while $\tau$ corresponds to the character mapping $a$ to $1$ and $b$ to $-1$.
The complements of $N^*$ in this group are $\langle c \rangle, \langle \sigma^2 c \rangle, \langle c\sigma\tau \rangle$ and $\langle c \sigma^3 \tau \rangle$
and these form two conjugacy classes (namely $c$ is conjugate to $\sigma^2 c$). So $M(G) \cong C_2$.

We proceed to calculate $M(H)$. Set $N = \langle r, s \rangle$ and $T = \langle t \rangle$.
Let $\sigma, \tau \in N^*$ such that $\sigma$ maps $r$ to $i$ while mapping $s$ to $1$ and $\tau$ maps $r$ to $1$ while mapping $s$ to $i$. Then
\[N^* \rtimes \langle t \rangle \cong (C_4 \times C_4) \rtimes \langle t \rangle = (\langle \sigma \rangle \times \langle \tau \rangle) \rtimes \langle t \rangle, \
\text{where} \ \sigma^t = \sigma, \ \tau^t = \sigma^3 \tau^3. \]
Then one computes that there are four complements of $N^*$ in $N^* \rtimes \langle t \rangle$ all being conjugate to $\langle t \rangle$.
Hence $H^1(T, N^*) = 1$. Furthermore $(N^*)^T = \langle \sigma \rangle = \operatorname{Im}(\operatorname{Tr}(N^*))$, so $H^2(T, N^*) = 1$ by Theorem \ref{H1H2TN*}.
By Corollary~\ref{cor:SchurofCn} $M(N) \cong C_4$. Assume that in the twisted group algebra $\mathbb{C}^\alpha N$ we have $[u_r, u_s] = \lambda$.
A simple calculation shows that $[u_{r^t}, u_{s^t}] = \lambda^7$. Therefore a class $[\beta] \in M(N)$  such that $[u_r, u_s] = i$ in $\mathbb{C}^\beta N$ is not invariant under $t$,
while the class $[\gamma]$ such that $[u_r, u_s] = -1$ in $\mathbb{C}^\gamma N$ is invariant.
Hence $M(N)^T \cong C_2$ and altogether by Theorem \ref{TaharaSchurMultiplierCalculation} we have an exact sequence
\[1 \rightarrow M(H) \rightarrow C_2 \rightarrow 1 ,\]
so $M(H) \cong C_2$.  So $G$ and $H$ satisfy condition B.

\item Condition C holds: By the above $M(G) \cong M(H) \cong C_2$. Since $G$ is a quotient of $S$ by a normal subgroup $N$ of the same order as $M(G)$ and $N$ is contained in $S' \cap Z(S)$, the group $S$ is by definition a Schur cover of $G$. The same argumentation shows that $S$ is a Schur cover of $H$ and in particular
\[\mathbb{C}S_G \cong \mathbb{C}S \cong \mathbb{C}S_H, \]
so condition C also holds.

\item Condition A does not hold: We have $Z(G) = \langle a^4 \rangle$, so $|G:Z(G)| = 16$,
and also no maximal subgroup of $G$ is abelian. This implies by Theorem \ref{OrdinaryDegreeCalculations}b) that $G$ has an irreducible complex representation of degree at least $4$.

On the other hand $Z(H) = \langle r^2s \rangle$, so $|H:Z(H)| = 8$ and hence by Theorem \ref{OrdinaryDegreeCalculations}b) the maximal degree of an irreducible complex representation of $H$ is $2$.
Thus $G$ and $H$ do not satisfy condition A.
\end{itemize}
\end{example}

\begin{example}\label{Ex3}
We next provide an example for A, C and D combined together not implying B.\\
Let
\begin{align*}
 S &\cong ((C_{16} \rtimes C_4) \times C_2) \rtimes C_2 = ((\langle x \rangle \rtimes \langle y \rangle) \times \langle z \rangle) \rtimes \langle w \rangle, \\
 &\text{where} \ x^y = x^{13}, \ x^w = xy^2, \ y^w = x^8yz, \ z^w = z.
\end{align*}
 Then $Z(S) = \langle x^4, z \rangle$, while $S' = \langle x^4, y^2, z \rangle$.
Set
\[G = S/\langle x^8, z \rangle \cong (C_8 \rtimes C_4) \rtimes C_2 = (\langle a \rangle \rtimes \langle b \rangle ) \rtimes \langle c \rangle, \ \text{with} \ a^b = a^5, \ b^c = b, \ a^c = ab^2,\]
where $a$, $b$ and $c$ denote the images of $x$, $y$ and $w$ respectively.
 Moreover let
\[H = S/\langle x^4z \rangle \cong (C_8 \times C_4) \rtimes C_2 = (\langle r \rangle \times \langle s \rangle ) \rtimes \langle t \rangle, \ \text{with} \ r^t = rs^2, \ s^t = r^4s,\]
where $r$, $s$ and $t$ denote the images of $xz$, $y$ and $w$ respectively.
\begin{itemize}[leftmargin=5pt]
\item Condition B does not hold: We claim that
\[M(G) \cong C_2 \times C_2 \]
and we are going to use Theorem \ref{TaharaSchurMultiplierCalculation} to prove this. Set $N = \langle a, b \rangle$ and $T = \langle c \rangle$. By Corollary~\ref{cor:SchurofCn} $M(T) = 1$.
Therefore $M(G)\cong \tilde{M}(G)$. Moreover
\[N^* \rtimes \langle c \rangle = (\langle \sigma \rangle \times \langle \tau \rangle) \rtimes \langle c \rangle \cong (C_4 \times C_4) \rtimes C_2, \ \text{where} \ \sigma^c = \sigma \tau^2, \ \tau^c = \tau.\]
Here $\sigma$ denotes the representation mapping $a$ to $i$ while mapping $b$ to $1$ and $\tau$ denotes the representation mapping $a$ to $1$ while mapping $b$ to $i$.
In this group the conjugacy classes of complements of $N^*$ are represented by $\langle c \rangle$ and $\langle \sigma^2 c \rangle$, so $H^1(T, N^*) \cong C_2$.
Define
\[\alpha: N \times N \rightarrow \mathbb{C}^*, \ \ \alpha(a^k b^l, a^m b^n) = i^{kn}.\]
One easily checks that this defines a $2$-cocycle on $N$. In the corresponding twisted group algebra one obtains the relation $[u_a, u_b] = iu_a^4$.
Thus $u_{a^2}$ does not commute with $u_b$ and $a^2$ is not an $\alpha$-regular class. Thus the twisted group algebra $\mathbb{C}^\alpha N$ is not isomorphic to $\C N$, showing that the cohomology class of $\alpha$ is not trivial. Up to coboundaries $\alpha$ is the only non-trivial twist on $N$, as can be easily computed or also obtained from  \cite{Tahara}. The cohomology class of $\alpha$ on $N$ is also invariant under the action of $T$, so $M(N)^T \cong C_2$. Hence from Theorem \ref{TaharaSchurMultiplierCalculation} we obtain an exact sequence
\[1 \rightarrow C_2 \rightarrow M(G) \rightarrow C_2 \xrightarrow{d} H^2(T, N^*). \]
We need to show that $d$ is the trivial map. This would provide $|M(G)| = 4$. Since $S$ is a central extension of $G$ by $C_2 \times C_2$ we know that $C_2 \times C_2 \leq M(G)$ and thus $|M(G)| = 4$ implies $M(G) \cong C_2 \times C_2$. Set $u: T \times N \rightarrow \mathbb{C}^*$ by defining $u(1,n) = 1$ for all $n \in N$ and
\[u(c,a^m b^n) = \left\{ \begin{array}{ll} 1, & m \equiv 0,1,4,5 \mod 8 \\  -1, & m \equiv 2,3,6,7 \mod 8. \end{array}\right. \]
One may check that this $u$ satisfies the definition of $u$ in Theorem \ref{TaharaSchurMultiplierCalculation} with respect to $\alpha$.
Thus $u(t, n) = u(t,n^c)$ for all $t \in T$ and $n \in N$ and since $u$ only takes the values $\pm 1$ we get that the corresponding $\alpha_*$
in Theorem \ref{TaharaSchurMultiplierCalculation} is the trivial map. Thus $d([\alpha])$ is the trivial class.

Next we show
\[M(H) \cong C_4, \]
again by Theorem \ref{TaharaSchurMultiplierCalculation}. Set $N = \langle r, s \rangle$ and $T = \langle t \rangle$.
Then as above we find $H^1(T, N^*) \cong C_2$. Now, by Corollary~\ref{cor:SchurofCn} $M(N) \cong C_4$, but the generator of $M(N)$ is not invariant under $T$,
while the square of the generator is, so $M(N)^T \cong C_2$. A $2$-cocycle realizing this $T$-invariant cohomology class is given by
\[\alpha: N \times N \rightarrow \mathbb{C}^*, \ \ \alpha(r^k s^l, r^m s^n) = (-1)^{kn}.\]
Again we need to show that $d$ is the trivial map. With the $\alpha$ above we have $\alpha(n,n')\alpha(n^t,n'^t)^{-1} = 1$ for all $n,n ' \in N$.
So defining $u$ as the function being constantly $1$ one obtains a $u$ satisfying the definition in Theorem \ref{TaharaSchurMultiplierCalculation}. Thus $d$ is trivial and $M(H) \cong C_4$.
In particular $G$ and $H$ do not satisfy condition B.

\item Condition C holds: From the Schur multipliers of $G$ and $H$ computed above and the construction of $G$ and $H$ from $S$ as quotients of $S$ by normal subgroups in $S'\cap Z(S)$ we know that $S$ is a Schur cover of both $G$ and $H$ and so
\[\mathbb{C}S_G \cong \mathbb{C}S_H.\]

\item Condition A holds: We have $G' = \langle a^4, b^2 \rangle$ and $Z(G) = \langle a^2, b^2 \rangle$, while $H' = \langle r^4, s^2 \rangle$ and $Z(H) =  \langle r^2, s^2 \rangle$.
So in particular the maximal degree of an irreducible representation of $G$ and $H$ is $2$ by Theorem \ref{OrdinaryDegreeCalculations}b) and all together
\[\mathbb{C}G \cong \mathbb{C}H \cong 16\mathbb{C} \oplus 12\mathbb{C}^{2\times 2}.\]

\item Condition D holds: Let $M(H)$ be generated by an element $[\alpha]$. Set $K = S/\langle x^8 \rangle$.
Then $K$ is a central extension of degree $2$ of both $G$ and $H$. Denote by $[\beta]$ an element of $M(G)$ corresponding to $K$. Then by Theorem \ref{ProjectiveDegrees} we have
\[\mathbb{C}K \cong \mathbb{C}H \oplus \mathbb{C}^{\alpha^2} H \cong \mathbb{C}G \oplus \mathbb{C}^\beta G. \]
Consequently, since $\C H\cong \C G$ we get $\mathbb{C}^{\alpha^2} H \cong \mathbb{C}^\beta G$.
Denote by $[\gamma]$ and $[\delta]$ the other non-trivial elements in $M(G)$. By Theorem \ref{SchurCover}, since $S$ is a Schur cover of both $G$ and $H$ we obtain
\[\mathbb{C}S \cong \mathbb{C}H \oplus \mathbb{C}^\alpha H  \oplus \mathbb{C}^{\alpha^2} H \oplus \mathbb{C}^{\alpha^3} H  \cong \mathbb{C}G \oplus \mathbb{C}^\beta G \oplus \mathbb{C}^\gamma G \oplus \mathbb{C}^\delta G, \]
so by the above
\[\mathbb{C}^\alpha H  \oplus \mathbb{C}^{\alpha^3}H \cong \mathbb{C}^\gamma G \oplus \mathbb{C}^\delta G. \]
The degree of an $\alpha$-projective representation of $H$ is divisible by $4$ by Lemma \ref{lemma:cohdiv}.
But since $|S:Z(S)| = 32$ we know by Theorem \ref{OrdinaryDegreeCalculations}a) that $S$ possesses no irreducible representation of degree $8$. Hence
\[4\mathbb{C}^{4 \times 4} \cong \mathbb{C}^\alpha H \cong \mathbb{C}^{\alpha^3} H  \cong \mathbb{C}^\gamma G \cong \mathbb{C}^\delta G, \]
so overall condition D holds for $G$ and $H$.\\
\end{itemize}
\end{example}

\begin{example}\label{Ex4} We provide an example for A, B and C combined together not implying D.\\
Let
\begin{align*}
S &= ((\langle x \rangle \rtimes \langle y \rangle) \times \langle z \rangle) \rtimes \langle w \rangle \cong ((C_8 \rtimes C_8) \times C_2) \rtimes C_2, \\
&\text{where} \ x^y = x^{-1}, \ x^w = xy^4z, \ y^w = y^5, \ z^w = z.
 \end{align*}
Then $S$ is a group of order $256$. We have $Z(S) = \langle x^4, y^2, z \rangle \cong C_4 \times C_2 \times C_2$. Moreover $S' = \langle x^2, y^4, z \rangle \cong C_4 \times C_2 \times C_2$. Set
 \[G = S/\langle x^4, z \rangle \]
 and
 \[H = S/\langle x^4, y^4z \rangle. \]
 Then
 \[ G = (\langle a \rangle \rtimes \langle b \rangle) \rtimes \langle c \rangle \cong (C_4 \rtimes C_8) \rtimes C_2, \ \text{where} \ a^b = a^{-1}, \ a^c = ab^4, \ b^c = b^5,\]
 where $a$, $b$ and $c$ denote the images of $x$, $y$ and $w$ respectively and
  \[H = (\langle r \rangle \rtimes \langle s \rangle) \rtimes \langle t \rangle \cong (C_4 \rtimes C_8) \rtimes C_2, \ \text{where} \ r^s = r^{-1}, \ r^t = r, s^t = s^5,\]
  where $r$, $s$ and $t$ denote the images of $x$, $y$ and $w$ respectively.
\begin{itemize}[leftmargin=5pt]
\item Condition A holds: We have
\[|G:G'| = |H:H'| = 16 \ \text{and} \ |G:Z(G)| = |H:Z(H)| = 8.\]
 So by Theorem \ref{OrdinaryDegreeCalculations}b)
 \[\mathbb{C}G \cong \mathbb{C}H \cong 16\mathbb{C} \oplus 12\mathbb{C}^{2\times 2}. \]
\item Condition B holds: We claim that
 \[M(G) \cong M(H) \cong C_2 \times C_2 \]
  and this will again be shown using Theorem \ref{TaharaSchurMultiplierCalculation}. We start with $G$. Set $N = \langle a, b\rangle$ and $T = \langle c \rangle$.
  Computing as in Examples \ref{Ex2} and \ref{Ex3} we find $H^1(T, N^*) \cong C_2$. One may check that $M(N) \cong C_2$ and that the map $\alpha: N \times N \rightarrow \mathbb{C}^*$ defined by
\[\alpha(a^k b^l,a^m b^n) = \left\{ \begin{array}{ll} 1, & n \ \ \text{even} \\  i^{k + 2kl}, & n \ \ \text{odd} \end{array}\right. \]
is a $2$-cocycle which is not cohomologues to the trivial $2$-cocycle. One checks that $[\alpha]$ is $T$-invariant and moreover $\alpha(n,n')\alpha(n^t,n'^t)^{-1} = 1$ for all $n,n ' \in N$. So one may choose $u$ in Theorem \ref{TaharaSchurMultiplierCalculation} to be the constant $1$-function and thus $d$ is the trivial map. This shows that $|M(G)| = 4$ and since $S$ is a central extension of $G$ by $C_2 \times C_2$, implying $C_2 \times C_2 \leq M(G)$, we obtain $M(G) \cong C_2 \times C_2$. In a similar way one can show that $M(H) \cong C_2 \times C_2$.
\item Condition C holds: By the Schur multipliers of $G$ and $H$ and their construction from $S$ it follows that $S$ is a Schur cover of both $G$ and $H$ and
 \[\mathbb{C}S_G \cong \mathbb{C}S_H. \]
\item Condition D does not hold: This will follow from the fact that there is only one $[\alpha] \in M(G)$ such that $\mathbb{C}^\alpha G$ is a direct sum of $2 \times 2$-matrice rings, but there are two $[\beta] \in M(H)$ such that $\mathbb{C}^\beta H$ is such a sum.

 This follows from Theorem \ref{OrdinaryDegreeCalculations}b): Let $v$ be an element running through the set $\{x^4, z, x^4z, y^4z, x^4y^4z \}$ in $S$. Note that $\{\mathbb{C}^\alpha G |[\alpha] \in M(G) \}$ is given by the complex group algebras of the groups $G$, $S/\langle x^4 \rangle$, $S/\langle z \rangle$ and $S/\langle x^4z \rangle$, while the algebras in $\{\mathbb{C}^\alpha H |[\alpha] \in M(H) \}$ are given by the complex group algebras of $H$, $S/\langle x^4 \rangle$, $S/\langle y^4z \rangle$ and $S/\langle x^4y^4z \rangle$. Now $S/\langle v \rangle$ has a maximal subgroup which is abelian if and only if $v = y^4z$. Moreover
 \[|S/\langle v \rangle : Z(S/\langle v \rangle )| = 16\]
  if $v \neq x^4$ and
  \[|S/\langle x^4 \rangle : Z(S/\langle x^4 \rangle)| = 8.\]

 Thus by Theorem \ref{OrdinaryDegreeCalculations}b) $M(G)$ contains exactly one class $[\alpha]$ such that $\mathbb{C}^\alpha G$ is a direct sum of $2\times 2$-matrix rings, namely the class corresponding to the extension $S/\langle x^4 \rangle$, while $M(H)$ contains two such classes, the one corresponding to $S/\langle x^4 \rangle$ and the one corresponding to $S/\langle y^4z \rangle$. In particular it is not possible to find a bijection between the multisets $\{\mathbb{C}^\alpha G \ | \ [\alpha] \in M(G)\}$ and $\{\mathbb{C}^\beta H \ | \ [\beta] \in M(H) \}$. So condition D does not hold.
\end{itemize}
\end{example}

\section{Groups of order $p^4$ and $p^2q^2$}\label{se:card}
From the definition of $\sim_{\C}$ it is clear that if $G \sim_{\C} H$ then $\C G \cong \C H$ and\\ $M(G)\cong  M(H)$.
By Examples~\ref{Ex2} and \ref{Ex3}, these two conditions on the groups $G$ and $H$ are not sufficient to determine whether $G \sim_{\C} H$.
Another example for this is provided in Example \ref{ex:3primes}.
However, for groups of certain orders these conditions together with the knowledge whether $G$ and $H$ are of central type or not, are sufficient to determine whether $G\sim_{\C} H$ holds or not.

As an immediate consequence from Lemma \ref{lemma:cohdiv} we get.
\begin{corollary}\label{cor:nontrivnonnd}
Let $G$ be a group of cardinality $p^4$ for a prime $p$, and let $[\alpha]\in M(G)$ be a non-trivial degenerate cohomology class. Then,
\begin{equation*}
\C ^\alpha G\cong \oplus _{i=1}^{p^2} \C^{p \times p}.
\end{equation*}
\end{corollary}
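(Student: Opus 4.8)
The plan is to pin down the Artin--Wedderburn decomposition of $\C^\alpha G$ purely from a dimension count together with the divisibility constraint supplied by Lemma~\ref{lemma:cohdiv}. Since $|G| = p^4$ is prime to $\operatorname{char}(\C) = 0$, the generalized Maschke theorem gives a semisimple decomposition $\C^\alpha G \cong \bigoplus_{j} \C^{d_j\times d_j}$ with $\sum_j d_j^2 = \dim_\C \C^\alpha G = p^4$. So everything reduces to determining the multiset $\{d_j\}$.

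First I would observe that $p$ divides every $d_j$. Indeed $M(G)$ is a $p$-group (it is the Schur multiplier of a $p$-group; equivalently the exponent of $H^2(G,\C^*)$ divides $|G| = p^4$), so the non-trivial class $[\alpha]$ has order a positive power of $p$; by Lemma~\ref{lemma:cohdiv} this order divides the dimension of every $\alpha$-projective representation, in particular each $d_j$, hence $p \mid d_j$ for all $j$. Combined with $d_j^2 \le p^4$ this leaves only the possibilities $d_j \in \{p, p^2\}$.

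Next I would rule out the value $p^2$. If some $d_{j_0} = p^2$, then already $d_{j_0}^2 = p^4 = \sum_j d_j^2$, forcing this to be the only summand, i.e. $\C^\alpha G \cong \C^{p^2\times p^2}$ is simple. But then $G$ has a unique irreducible $\alpha$-projective representation, so $[\alpha]$ would be nondegenerate, contradicting the hypothesis that $[\alpha]$ is degenerate. Hence $d_j = p$ for all $j$, and comparing dimensions the number of summands is $p^4/p^2 = p^2$, which is exactly the claimed decomposition $\C^\alpha G \cong \bigoplus_{i=1}^{p^2}\C^{p\times p}$.

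There is no serious obstacle here: the argument is a short combination of a dimension count with Lemma~\ref{lemma:cohdiv}. The only point needing a word of care is the input that $\operatorname{ord}([\alpha])$ is divisible by $p$, which is why I would explicitly invoke that the Schur multiplier of a $p$-group is a $p$-group (or, equivalently, that $|G|$ annihilates $H^2(G,\C^*)$); every other step is forced.
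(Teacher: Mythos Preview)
Your proof is correct and is precisely the argument the paper has in mind: the corollary is stated as an immediate consequence of Lemma~\ref{lemma:cohdiv}, and you have simply spelled out the semisimplicity, the divisibility $p\mid d_j$, and the dimension count that forces $d_j=p$ for all $j$. There is nothing to add.
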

Therefore, the following is also clear.
\begin{lemma}\label{lemma:p4enough2}
 Let $p$ be prime and let $G$ and $H$ be groups of order $p^4$ with the following properties
 \begin{enumerate}
  \item $G$, $H$ are both not of central type.
  \item $M(G) \cong M(H)$.
  \item $\C G \cong \C H$.
  \end{enumerate}
Then, $G \sim _{\C} H$.
\end{lemma}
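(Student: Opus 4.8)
The plan is to exploit the fact that a group of order $p^4$ which is not of central type has very rigid twisted group algebras: by Corollary~\ref{cor:nontrivnonnd} the isomorphism type of $\C^\alpha G$ is determined solely by whether $[\alpha]$ is trivial. Since the definition of $\sim_\C$ asks only for \emph{some} group isomorphism $M(G)\to M(H)$, and not one induced by an isomorphism of groups, it will then suffice to pick an arbitrary such isomorphism and to check the two resulting cases.

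First I would pin down the uniform shape of the non-trivial twisted algebras. As $G$ is not of central type, no class in $M(G)$ is nondegenerate, so every non-trivial $[\alpha]\in M(G)$ is a non-trivial degenerate class, and Corollary~\ref{cor:nontrivnonnd} yields $\C^\alpha G\cong\bigoplus_{i=1}^{p^2}\C^{p\times p}$; by hypothesis~(1) the same holds for every non-trivial class of $M(H)$. Using hypothesis~(2), I would then fix any group isomorphism $\psi\colon M(G)\to M(H)$. Being a homomorphism, $\psi$ sends the trivial class to the trivial class, so for trivial $[\alpha]$ we get $\C^\alpha G\cong\C G\cong\C H\cong\C^{\psi(\alpha)}H$ by hypothesis~(3); for non-trivial $[\alpha]$ the class $\psi(\alpha)$ is non-trivial, and both $\C^\alpha G$ and $\C^{\psi(\alpha)}H$ are isomorphic to $\bigoplus_{i=1}^{p^2}\C^{p\times p}$, hence to each other. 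Thus $\psi$ witnesses $G\sim_\C H$.

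I do not expect any genuine obstacle; the argument is essentially a repackaging of Corollary~\ref{cor:nontrivnonnd}, whose own content is the elementary observation that, by Lemma~\ref{lemma:cohdiv}, the degrees of the Wedderburn components of $\C^\alpha G$ are divisible by $p$, while the relation $\sum d_j^2=p^4$ forces each of them to equal $p$ once the non-central-type hypothesis excludes a single $p^2\times p^2$ block. The only points needing a moment's care are the treatment of the trivial class, handled separately via hypothesis~(3), and the observation that a group homomorphism preserves the identity element.
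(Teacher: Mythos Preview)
Your proposal is correct and follows essentially the same approach as the paper's proof: pick any isomorphism $\psi\colon M(G)\to M(H)$, use that $\psi$ sends the trivial class to the trivial class together with hypothesis~(3) for the trivial case, and invoke Corollary~\ref{cor:nontrivnonnd} (enabled by hypothesis~(1)) to see that all non-trivial twisted algebras on both sides are isomorphic to $\bigoplus_{i=1}^{p^2}\C^{p\times p}$. Your added remarks on why Corollary~\ref{cor:nontrivnonnd} holds are accurate but not needed for the lemma itself.
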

\begin{proof}
 Let $\psi: M(G) \rightarrow M(H)$ be an isomorphism.
Then, since $\psi$ is an isomorphism it takes the identity of $M(G)$ to the identity of $M(H)$.
Consequently,
\begin{equation}\label{eq:trivcong}
 \C^{[1]}G \cong \C G \cong \C H \cong \C^{\psi([1])} H.
\end{equation}
Since both $G$ and $H$ are not of central type, any cohomology class in $M(G)$ and in $M(H)$ is degenerate.
Therefore, by Corollary~\ref{cor:nontrivnonnd}, for any non-trivial cohomology class $[\alpha]\in M(G)$,
\begin{equation}\label{eq:nontrivcong}
 \C^{\alpha}G \cong \C^{\psi(\alpha)} H.
\end{equation}
By~\eqref{eq:trivcong} and~\eqref{eq:nontrivcong}, $G \sim _{\C} H$.
\end{proof}
As a result of the above, Lemma~\ref{lemma:abcase} and by \cite[Table 2, Table 3]{Higgs2006} we get the partition of $\Omega _{p^4}$ to $\sim_{\C}$-equivalence classes for any prime $p$.
We use the notations of Table~\ref{tab:p4} and Table~\ref{tab:16} which can be found in the appendix.
\begin{theorem}\label{th:p4class}
 Let $p$ be an odd prime. With the notations of Table~\ref{tab:p4}, in the partition of $\Omega _{p^4}$ to $\sim_{\C}$-equivalence classes, all groups are singletons
 except for the two size three equivalence classes $\{G_{(\text{vii})},G_{(\text{ix})},G_{(\text{x})}\}$ and $\{G_{(\text{xi})},G_{(\text{xii})},G_{(\text{xiii})}\}$.
 With the notations of Table~\ref{tab:16}, in the partition of $\Omega _{16}$ to $\sim_{\C}$-equivalence classes  all groups are singletons
 except for the two size two equivalence classes $\{G_{(\text{vii})},G_{(\text{xi})}\}$ and $\{G_{(\text{xiii})},G_{(\text{xiv})}\}$.
\end{theorem}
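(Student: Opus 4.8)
The plan is to classify the groups of order $p^4$ (resp.\ of order $16$) by three invariants that are all refined by $\sim_{\mathbb{C}}$ — the Schur multiplier $M(G)$, the Wedderburn decomposition of $\mathbb{C}G$, and whether $G$ is of central type — and then to feed this into the tools already at hand. Lemma~\ref{lemma:abcase} makes every abelian group a $\sim_{\mathbb{C}}$-singleton, so I would reduce to the non-abelian groups of Table~\ref{tab:p4} (resp.\ Table~\ref{tab:16}). For two such groups $G$, $H$ that are \emph{not} of central type, every non-trivial class of $M(G)$ is degenerate, so Corollary~\ref{cor:nontrivnonnd} gives $\mathbb{C}^\alpha G\cong \oplus_{i=1}^{p^2}\mathbb{C}^{p\times p}$ for every $[\alpha]\neq 1$, independently of $[\alpha]$ and of $G$. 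Combined with Lemma~\ref{lemma:p4enough2} and the fact that $G\sim_{\mathbb{C}} H$ implies $M(G)\cong M(H)$ and $\mathbb{C}G\cong\mathbb{C}H$, this reduces the question for non-central-type non-abelian groups to a single equivalence: $G\sim_{\mathbb{C}} H$ holds if and only if $M(G)\cong M(H)$ and $\mathbb{C}G\cong\mathbb{C}H$.

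Next I would assemble the table of invariants. For each of the fifteen (resp.\ fourteen) isomorphism types I would record the multiset of complex irreducible character degrees (equivalently $\mathbb{C}G$) — classical for groups of order $p^4$, and for $p=2$ simply read off the character tables of the fourteen groups of order $16$ — the Schur multiplier, which I would take from \cite[Table 2, Table 3]{Higgs2006} (or recompute from the presentations in the tables using Theorem~\ref{TaharaSchurMultiplierCalculation} and Theorem~\ref{H1H2TN*}, exactly as in Examples~\ref{Ex2}--\ref{Ex4}), and whether the group is of central type. Reading this data off, I expect to find that among the non-central-type non-abelian groups the pair $(M(G),\mathbb{C}G)$ repeats exactly on $\{G_{(\text{vii})},G_{(\text{ix})},G_{(\text{x})}\}$ and on $\{G_{(\text{xi})},G_{(\text{xii})},G_{(\text{xiii})}\}$ for $p$ odd, and exactly on $\{G_{(\text{vii})},G_{(\text{xi})}\}$ and on $\{G_{(\text{xiii})},G_{(\text{xiv})}\}$ for $p=2$; by the reduction above these are then precisely the non-trivial $\sim_{\mathbb{C}}$-classes of non-central-type groups, and every other non-central-type non-abelian group is a singleton.

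Finally I would dispose of the groups of central type in the tables. Since $\sim_{\mathbb{C}}$ preserves being of central type and forces $M(G)\cong M(H)$ and $\mathbb{C}G\cong\mathbb{C}H$, it is enough to check that the central-type groups of order $p^4$ (resp.\ $16$) are pairwise separated by the pair $(M(G),\mathbb{C}G)$ — a finite verification from the assembled table, which for $p=2$ can also be confirmed by a direct computation in the spirit of Remark~\ref{Order64}. Each of them is then a $\sim_{\mathbb{C}}$-singleton, and together with Lemma~\ref{lemma:abcase} and the previous paragraph this accounts for all groups of order $p^4$ (resp.\ $16$) and yields the asserted partition.

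I expect the main obstacle to be the reliability of the classification data rather than any single conceptual step: determining, uniformly in the odd prime $p$, the Schur multiplier and the central-type status of each non-abelian group of order $p^4$ (where Theorem~\ref{TaharaSchurMultiplierCalculation} carries the load), and verifying that the only coincidences of $(M(G),\mathbb{C}G)$ among non-central-type groups are the four families above. The subtlest point is the central-type case, where Lemma~\ref{lemma:p4enough2} is not available: if two distinct central-type groups of order $p^4$ turned out to share both invariants, a separate argument would be needed to decide whether they are $\sim_{\mathbb{C}}$-equivalent, so confirming that this situation does not arise is an essential part of the plan.
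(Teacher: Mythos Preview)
Your proposal is correct and follows essentially the same route as the paper: both arguments use Lemma~\ref{lemma:abcase} for abelian groups, Lemma~\ref{lemma:p4enough2} together with the data from \cite{Higgs2006} for the non-central-type non-abelian groups, and then separate the central-type groups by their Schur multipliers (the paper notes that $M(G)$ alone already distinguishes $G_{(\text{viii})}, G_{(\text{xiv})}, G_{(\text{xv})}$ for odd $p$, so you will not need the pair $(M(G),\mathbb{C}G)$ there).
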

\begin{proof}
 By Lemma~\ref{lemma:abcase} the abelian groups are all $\sim_{\C}$-singletons.
 Let $p$ be an odd prime. By \cite[Theorem 1]{Schnabelnew} or by \cite[Table 2]{Higgs2006} the groups $\{G_{(\text{viii})},G_{(\text{xiv})},G_{(\text{xv})}\}$
 are all of central type and therefore can
 be in relation only with themselves.
 However, by \cite[Table 2]{Higgs2006} these groups admit non-isomorphic Schur multipliers and therefore they are all singletons.
 The rest follows from the computation of the complex group algebras over these groups (see \cite[p.4]{Higgs2006}), by the computation of the Schur multipliers (see \cite[Table 2]{Higgs2006})
 and by Lemma~\ref{lemma:p4enough2}.

 The $p=2$ case is done in a similar way, using \cite[Theorem 2]{Schnabelnew} or \cite[Table 3]{Higgs2006}
 for the classification of groups of central type of order $16$. Then the classification is completed using the description of the Schur multipliers
 of groups of order $16$ which can be found in \cite[Table 3]{Higgs2006}, by using the description of the complex group algebra over groups of order $16$ which can be found in
 \cite[p.7]{Higgs2006} and then applying Lemma~\ref{lemma:p4enough2}.
\end{proof}

We will now study the case where $G$ is a group of cardinality $p^2q^2$ for primes $p<q$.
It is well known (see e.g. \cite[Lemma 3.9]{ginosargradings})
that for $pq\neq 6$ the Sylow $q$-subgroup of $G$ is normal and therefore $G=Q\rtimes P$ where $P$ is a Sylow $p$-subgroup of $G$ and $Q$ is a Sylow $q$-subgroup of $G$.
In the case where $pq=6$ either $G=Q\rtimes P$ or $G=P\rtimes Q$.

\begin{theorem}\label{th:pqcase}
Let $G$ and $H$ be groups of cardinality $p^2q^2$ for primes $p<q$.
 If $M(G) \cong  M(H)$ and $\C G\cong \C H$, then $G\sim _{\C} H$.
 \end{theorem}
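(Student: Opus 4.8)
The plan is to reduce to the semidirect product structure recalled above and then show that, for any group $G$ of order $p^2q^2$, the isomorphism type of the twisted group algebra $\C^\alpha G$ is completely determined by the order of $[\alpha]$ in $M(G)$ together with $\C G$. Write $G=Q\rtimes P$ with $|Q|=q^2$, $|P|=p^2$ (legitimate for $pq\neq 6$; the case $pq=6$, i.e.\ order $36$, will be handled by the same method with the roles of $p$ and $q$ exchanged, together with a direct inspection of the few relevant groups). Both $P$ and $Q$ are abelian, and since $\gcd(|P|,|Q|)=1$, Lemma~\ref{eq:cohosemi} gives $M(G)=M(Q)^P\times M(P)$, which by Corollary~\ref{cor:SchurofCn} is cyclic of order dividing $pq$; I will analyse $H=Q_H\rtimes P_H$ in exactly the same way.

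Fix $[\alpha]\in M(G)$ of order $n\in\{1,p,q,pq\}$. For $n=1$ we have $\C^\alpha G=\C G$. For $n=pq$, Lemma~\ref{lemma:cohdiv} forces every $\alpha$-projective degree to be divisible by $pq$, and since the squares of the degrees sum to $p^2q^2$ this yields $\C^\alpha G\cong M_{pq}(\C)$. For $n=q$ the class $[\alpha]$ lies in $M(Q)^P$ (its image in the $p$-group $M(P)$ being trivial), so $Q\cong C_q\times C_q$ is of central type with respect to $[\alpha]$; the subalgebra of $\C^\alpha G$ spanned by $Q$ is then $\cong M_q(\C)$, and an application of Skolem--Noether identifies $\C^\alpha G$ with $M_q(\C^\gamma P)$ for some $[\gamma]\in M(P)$. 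The crucial point is that $[\gamma]$ must be trivial: otherwise, combining the $q$-dimensional $\alpha$-projective representation of $Q$ with the matrices implementing the $P$-conjugation would produce a projective representation of $G$ of degree $q$ whose cohomology class has order $pq$, contradicting the case $n=pq$. Hence $\C^\alpha G\cong\oplus_{i=1}^{p^2}M_q(\C)$, independently of the chosen class.

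The main case is $n=p$, which forces $P\cong C_p\times C_p$ and $[\alpha]$ to be inflated from $P=G/Q$. Then $\C^\alpha G$ is the crossed product of $\C Q\cong\oplus_{\chi\in Q^*}\C$ with $P$, twisted by $\alpha|_P$, and splitting over the $P$-orbits on $Q^*$ gives $\C^\alpha G\cong\bigoplus_{\mathcal O}M_{|\mathcal O|}\big(\C^{\alpha|_{P_\chi}}P_\chi\big)$, where $P_\chi$ is the stabilizer of a character in the orbit $\mathcal O$. Since $\alpha|_P$ restricts trivially to every proper subgroup of $P$ (as $M(C_p)=1$), each orbit contributes only copies of $M_p(\C)$ unless it is free, in which case it contributes a single $M_{p^2}(\C)$. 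Writing $b_G$ for the number of free $P$-orbits on $Q^*$, we get $\C^\alpha G\cong b_G\,M_{p^2}(\C)\oplus(q^2-p^2b_G)\,M_p(\C)$, once more independent of the chosen class of order $p$. Running the same decomposition with the trivial cocycle shows that $\C G$ has exactly $b_G$ simple summands of dimension $p^2$ (note that every irreducible complex character $\chi$ of $G$ satisfies $\chi(1)\mid p^2$ by Ito's theorem), so $b_G$ is an invariant of $\C G$.

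Assembling the pieces: $\C G\cong\C H$ forces $b_G=b_H$ and provides the $n=1$ comparison, while $M(G)\cong M(H)$ makes $M(H)$ cyclic of the same order as $M(G)$; choosing any group isomorphism $\psi\colon M(G)\to M(H)$, which automatically preserves the order of every element, the four computations above yield $\C^\alpha G\cong\C^{\psi(\alpha)}H$ for all $[\alpha]$, so $G\sim_\C H$. The hard part will be the two middle cases: presenting $\C^\alpha G$ as a crossed product with no spurious cocycle contribution, and above all proving that the obstruction class $[\gamma]\in M(P)$ in the case $n=q$ is trivial --- this is where Lemma~\ref{lemma:cohdiv}, applied to an auxiliary cohomology class of order $pq$, carries the argument.
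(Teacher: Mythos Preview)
Your proof is correct and follows a genuinely different route from the paper's. The paper splits cases by the isomorphism type of $M(G)$: the case $M(G)\cong C_{pq}$ is dismissed by quoting an external uniqueness theorem (there is at most one non-abelian group of this form), the case $M(G)\cong C_q$ is handled by observing that then $P$ must be cyclic so that Lemma~\ref{lemma:cohdiv} alone forces $\C^\alpha G\cong\oplus_{i=1}^{p^2}\C^{q\times q}$, and the case $M(G)\cong C_p$ is treated by showing that the $\alpha$-regular conjugacy classes are exactly those lying in $Q$ and then reading off $\dim Z(\C^\alpha G)$. You instead split by the order of the individual class $[\alpha]$ and establish the sharper intermediate statement that the Wedderburn type of $\C^\alpha G$ depends only on that order together with $\C G$; this makes your argument self-contained, in particular avoiding the external uniqueness result for the $C_{pq}$ case. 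Your order-$q$ step (Skolem--Noether plus the obstruction argument producing a degree-$q$ projective representation with cocycle of order $pq$) is the genuinely new ingredient: it works even when $P\cong C_p\times C_p$, whereas the paper never needs to face that situation. Your order-$p$ analysis via $P$-orbits on $Q^*$ is the Clifford-theoretic dual of the paper's $\alpha$-regular class count on $Q$; both extract the same numerical invariant of $\C G$.
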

 \begin{proof}
 First, by Lemma~\ref{lemma:abcase} the theorem is clear for any abelian groups.
 Notice that if $G\cong Q\rtimes P$ and $H\cong P\rtimes Q$ are not abelian then $\C G$ and $\C H$ are not isomorphic.
 Therefore we may always assume that $G$ and $H$ admit the same decomposition as semi-direct products with respect to Sylow subgroups.
 From now assume that $G$ is a non-abelian group which admits a decomposition as $G=Q\rtimes P$.

 By Lemma \ref{eq:cohosemi}
\begin{equation}\label{eq:semdirectoverc}
 M(G) = M(Q)^P \times M(P).
\end{equation}
Therefore, by Corollary~\ref{cor:SchurofCn} the Schur multiplier is either trivial or isomorphic to $C_p,C_q$ or $C_{pq}$.
Now, by \cite[Theorem 3.21]{ginosargradings}, there exists at most one non-abelian group of the form $G=Q\rtimes P$ such that $M(G)$ is isomorphic to $C_{pq}$ and therefore
the theorem is obvious for this group. Notice also that the theorem is clear for all groups which have trivial Schur multiplier.
We are left with the cases in which $M(G)$ is isomorphic to either $C_p$ or to $C_q$.
Assume that $M(G) \cong C_q$. Then, since $p<q$, by Lemma~\ref{lemma:cohdiv} if $[\alpha]\in M(G)$ is a non-trivial cohomology class then
\begin{equation*}
 \C ^\alpha G\cong \oplus _{i=1}^{p^2} \C^{q \times q}.
\end{equation*}
Consequently, if $G$ and $H$ are groups such that $M(G) \cong  M(H) \cong C_q$ and $\C G\cong \C H$ then any isomorphism
from $M(G)$ to $M(H)$ induces a family of isomorphisms between the corresponding twisted group algebras and therefore $G\sim _{\C} H$.
Assume now that $M(G) \cong C_p$. In this case, $P\cong C_p\times C_p\cong \langle x\rangle \times \langle y\rangle$ and any non-trivial cohomology class $[\alpha]\in M(G)$
is determined by $[u_x,u_y]=\zeta _p$ in $\C ^\alpha G$ where $\zeta _p$ is a primitive $p$-th root of unity.
Therefore, a simple calculation shows that the set of the $\alpha$-regular classes in $G$ is the set of conjugacy classes contained in $Q$.
Now, since $\C G\cong \C H$ the number of conjugacy classes which is contained in the Sylow $q$-subgroup of $G$ and $H$ is the same.
Therefore, by the discussion at the end Section \ref{s:pre}
\begin{equation}\label{eq:samecenter}
 \text{dim}\left(Z\left(\C^\alpha G \right)\right)=\text{dim}\left(Z\left(\C ^{\beta}H \right)\right)
\end{equation}
for any non-trivial $[\alpha]\in M(G)$ and non-trivial $[\beta]\in M(H)$.
Since in the Artin-Wedderburn decomposition of $\C ^\alpha G$ and $\C ^{\beta}H $ a simple component is isomorphic to either $\C^{p \times p}$ or $\C^{p^2 \times p^2}$, and since $|G|=|H|$, by~\eqref{eq:samecenter}
$\C ^\alpha G\cong \C ^{\beta}H$.
Consequently, since also $\C G\cong \C H$ we get that $G\sim_{\C} H$.
The case where $p=2,q=3$ and $G$ is non-abelian of the form $G=P\rtimes Q$ is done in a similar way.
 \end{proof}

In the following example we show how to use the above theorem in order to classify all the $\sim_{\C}$-equivalence classes of $\Omega _n$ for $n=p^2q^2$ where $p<q$ are primes.
A key point in dealing with these cases is that we know how to describe all the groups of order $p^2q^2$ \cite{Vavasseur}.
We use the fact that $\operatorname{Aut}(C_q\times C_q)$ can be identified with ${\operatorname{GL}}_2(\mathbb{F}_q)$. We will take the more complicated case in which $p^2$ is a divisor of $q-1$ and in particular it will also divide the
cardinality of ${\operatorname{GL}}_2(\mathbb{F}_q)$. We denote a diagonal matrix $A=(a_{ij})\in {\operatorname{GL}}_2(\mathbb{F}_q)$ by $d(a_{11},a_{22})$.

\begin{example}\label{ex:319}
Let $p=3,q=19$ and let $n=p^2q^2$.
We will classify the $\sim_{\C}$-equivalence classes of $\Omega _n$.
First, any $G \in\Omega _n$ admits a decomposition as $G=Q\rtimes P$ where $Q$ and $P$ denote a Sylow $q$-subgroup and Sylow $p$-subgroup respectively.
By Corollary~\ref{cor:SchurofCn} and~\eqref{eq:semdirectoverc}, $|M(G)|$ is divisible by $p$ if and only if $P\cong C_p\times C_p$. On the other hand
$|M(G)|$ is divisible by $q$ if and only if $Q\cong C_q \times C_q$ and the action of $P$ on $Q$ is given by an element in ${\operatorname{SL}}_2(\mathbb{F}_q)$ (see \cite[Lemma 3.16]{ginosargradings}).
Denote the abelian groups
$$C_{q^2p^2},C_{q^2}\times C_p\times C_p,C_q\times C_q\times C_{p^2},C_q\times C_q\times C_p\times C_p$$
 by $G_1,G_2,G_3,G_4$ respectively.
We are now ready to start partitioning the groups of order $n$ by their Schur multiplier.
\begin{enumerate}
 \item The only abelian group of order $n$ with trivial Schur multiplier is $G_1$.
 There are $10$ non-abelian groups of order $n$ whose Schur multiplier is trivial.
 These groups are the two groups in which $Q$ and $P$ are both cyclic where in one of them, $G_5$, the action is of order $p$ and in the other, $G_6$, the action is of order $p^2$.
 The other $8$ non-abelian groups have the form $(C_q\times C_q)\rtimes C_{p^2}$ where the $8$ different actions are given by the following diagonal matrices:
 $$d(1,\zeta ^3),d(\zeta ^3,\zeta ^3),d(1,\zeta), d(\zeta,\zeta ),d(\zeta,\zeta ^2),d(\zeta,\zeta^3),d(\zeta,\zeta^4),d(\zeta,\zeta^6).$$
 Here $\zeta\in \mathbb{F}_q^*$ has order $p^2$.
 We number these groups as $G_7-G_{14}$.
 \item The only abelian group of order $n$ with Schur multiplier isomorphic to $C_p$ is $G_2$.
 There are $4$ non-abelian groups of order $n$ whose Schur multiplier is isomorphic to $C_p$.
 $G_{15}\cong C_{q^2}\rtimes (C_p\times C_p)$, $G_{16}\cong (C_q\times C_q)\rtimes (C_p\times C_p)$ where in $G_{16}$ the action is faithful.
 And there are another two groups, $G_{17},G_{18}$ of the form $(C_q\times C_q)\rtimes (C_p\times C_p)$ with non-faithful actions induced by the matrices
 $d(1,\zeta ^3),d(\zeta ^3,\zeta ^3)$.
 \item The only abelian group of order $n$ whose Schur multiplier is isomorphic to $C_q$ is $G_3$.
 There are two non-abelian groups of order $n$ whose Schur multiplier is isomorphic to $C_q$.
 Both these groups, $G_{19},G_{20}$ have the form $(C_q\times C_q)\rtimes C_{p^2}$ with corresponding actions given by the matrices  $d(\zeta ^3,\zeta ^6),d(\zeta ,\zeta ^8)$.
 \item The only abelian group of order $n$ whose Schur multiplier is isomorphic to $C_{pq}$ is $G_4$.
 Finally, there is a unique non-abelian group of order $n$ which admits a Schur multiplier isomorphic to $C_{pq}$.
 This group, $G_{21}$, is $(C_q\times C_q)\rtimes (C_p\times C_p)$ with a non-faithful action which is induced by the matrix $d(\zeta ^3,\zeta ^6)$.
\end{enumerate}
We sum this information in Table \ref{tab:SchurMult}.
   \begin{table}[h]
        \centering
       \caption{Schur multipliers of groups of order $3^2 \cdot 19^2$.} 
        \setlength{\tabcolsep}{2pt}
        \begin{tabular}{l r}
 \hline\hline 
$M(G)$ is trivial & $G_1$ or $G_i$ for $5\leq i\leq 14$\\
\hline
$M(G) \cong C_p$ & $G_2$ or $G_i$ for $15\leq i\leq 18$ \\
\hline
$M(G) \cong C_q$ & $G_3$ or $G_i$ for $19\leq i\leq 20$ \\
\hline
$M(G) \cong C_{pq}$ & $G_4$ or $G_{21}$\\
\hline
        \end{tabular}
          \label{tab:SchurMult}
    \end{table}

In order to apply Theorem~\ref{th:pqcase}, we now need to find the Artin-Wedderburn decomposition of $\C G_i$ for $1\leq i \leq 21$.
We will demonstrate the method to do so for $G_{12}$ and then will sum this information in Table \ref{tab:GroupRings}.
Let $G = (C_q\times C_q) \rtimes C_{p^2}$ where $p,q$ are primes such that $p^2$ divides $q-1$ and the action is given by $d(\zeta,\zeta ^p)$.
$G'\cong C_q\times C_q$ admits a trivial representation and $q^2-1$ representations of order $q$.
Consider the action of $C_{p^2}$ on the representations of $G'$.
Clearly the $G'$-trivial representation is stable under this action and hence
there are $p^2$ irreducible $1$-dimensional representations of $G$ which correspond to the trivial representation of $G'$.
Now, if a representation of $G'$ of order $q$ admits an orbit of length $p$ then it also admits a stabilizer of order $p$.
Therefore, for any such orbit of length $p$ of representations of $G'$ of order $q$ there are $p$ irreducible representations of $G$ of dimension $p$. Since the action is given by $d(\zeta,\zeta ^p)$, there are exactly $q-1$ irreducible representations of $G'$ with orbit length $p$.
We conclude that there are $q-1$ irreducible representations of $G$ of dimension $p$.
Finally, there are $q(q-1)$ irreducible representations of $G'$ with orbit length $p^2$.
To each such orbit there is a corresponding representation of $G$ of dimension $p^2$.
Consequently, there are $\frac{q(q-1)}{p^2}$ irreducible representations of $G$ of dimension $p^2$.
As a result
$$\C G\cong \C^{p^2} \oplus (q-1)\C^{p \times p} \oplus \frac{q(q-1)}{p^2} \C^{p^2 \times p^2}.$$
Which means that
$$\C G_{12}\cong 9\C \oplus 18\C^{3 \times 3} \oplus 38 \C^{9 \times 9}.$$
Similarly we can find the Artin-Wedderburn decomposition of the group algebras $\C G_i$ for any	 $1\leq i\leq 21$.
These can be found in Table \ref{tab:GroupRings}.
    \begin{table}[h]
        \centering
       \caption{Complex group algebras of groups of order $3^2 \cdot 19^2$.} 
        \setlength{\tabcolsep}{2pt}
        \begin{tabular}{l r}
 \hline\hline 
$\C G_i$ is abelian & $G_1,G_2,G_3,G_4$  \\
\hline
$\C G_i\cong 171\C \oplus 342\C^{3 \times 3}$ & $G_7,G_{17}$   \\
\hline
$\C G_i\cong 171\C \oplus 38 \C^{9 \times 9}$ & $G_9$   \\
\hline
$\C G_i\cong 9\C \oplus 360\C^{3 \times 3}$ & $G_5,G_8,G_{15},G_{18},G_{19},G_{21}$   \\
\hline
$\C G_i\cong 9\C \oplus 40 \C^{9 \times 9}$ & $G_6,G_{10},G{11},G_{13},G_{20}$   \\
\hline
$\C G_i\cong 9\C \oplus 18\C^{3 \times 3} \oplus 38\C^{9 \times 9}$ & $G_{12},G_{14}$   \\
\hline
$\C G_i\cong 9\C \oplus 36\C^{3 \times 3} \oplus 36\C^{9 \times 9}$ & $G_{16}$   \\
\hline
        \end{tabular}
          \label{tab:GroupRings}
    \end{table}

We conclude from both of the above tables and Theorem \ref{th:pqcase} that the $\sim_{\C}$-equivalence classes of $\Omega _{3^2 19^2}$ are as follows:
The groups $G_1$, $G_2$, $G_3$, $G_4$, $G_7$, $G_9$, $G_{16}$, $G_{17}$, $G_{19}$, $G_{20}$, $G_{21}$ are all singletons.
We have three equivalence classes containing two groups, $\{G_5,G_8\}$, $\{G_{12},G_{14}\}$, $\{G_{15},G_{18}\}$.
And finally, we have one equivalence class containing four groups, $\{G_6,G_{10}, G_{11},G_{13}\}$.
\end{example}

\section{Groups of central type}\label{se:groupsofct}
By Theorem~\ref{th:p4class} $p$-groups of central type of order $p^4$ are all $\sim _{\C}$ singletons.
Also for primes $p<q$, by \cite[Theorem 3.21]{ginosargradings} groups of central type of order $p^2q^2$ are all\\
$\sim _{\C}$ singletons.
In the cases where $q>3$ there is a unique non-abelian group of central type of order $p^2q^2$ while in the case $pq=6$ there are two
non-abelian groups of central type. However, since these groups admits commutator subgroups of different cardinality the complex group algebras over these groups are not isomorphic and therefore they are not $\sim _{\C}$-equivalent.
This gives the idea that a group of central type is always a $\sim_{\C}$-singleton.
We give some cases of cardinalities of groups for which this is true but eventually we give an example of two non-isomorphic groups of central type which are $\sim_{\C}$-equivalent. We remark however that we do not know a $p$-group of central type which is not a $\sim_\mathbb{C}$-singleton.

In order to deal with groups of central type of order $n^2$ where $n$ is square-free we record the following theorem from \cite{ginosargradings} which classifies these groups.
\begin{theorem}\label{th:squarefreedec}(see [\cite[Theorem A]{ginosargradings})
Let $n$ be a square-free number and let $G$ be a group of central type of order $n^2$.
\begin{enumerate}
 \item There exist $m,k\in \mathbb{N}$ such that $G'\cong C_m\times C_m$ and
$$G\cong G' \rtimes (C_k\times C_k),$$
where the action is given by an element in SL$_2(m)$.
\item The identity of $G'$ is the only element fixed under the $C_k\times C_k$ action.
\item For primes $p$, $q$ which divide $k$, $m$ respectively, any non-abelian subgroup of $G$ of cardinality $p^2q^2$ (if it exists) admits a center of cardinality $p$.
\end{enumerate}
 \end{theorem}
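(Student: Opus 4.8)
\emph{Approach.} I would fix a nondegenerate class $[\alpha]\in M(G)$ at the outset, so that $\C^\alpha G\cong\C^{n\times n}$ and, because its centre is one-dimensional, $1$ is the only $\alpha$-regular element of $G$. Part~(1) is the heart of the matter; once it is available, (2) will drop out of the description of the centre of $\C^\alpha G$ in terms of $\alpha$-regular classes, and (3) from a local computation inside $\operatorname{SL}_2$ of a prime field. The first thing I would prove is that \emph{every Sylow $p$-subgroup $P$ of $G$ satisfies $\operatorname{res}^G_P[\alpha]\neq 1$ and hence $P\cong C_p\times C_p$}: since $n$ is square-free, $[G:P]=n^2/p^2$ is not a multiple of $n$; but $\C^\alpha G$ is free of rank $[G:P]$ as a left $\C^\alpha P$-module, so if $\alpha|_P$ were cohomologically trivial then $\C^\alpha P\cong\C P\cong\C^{p^2}$ and, decomposing its identity into the $p^2$ primitive idempotents $e_j$, one gets $\dim(e_j\C^\alpha G)=[G:P]$ for each $j$ — impossible, since $\C^\alpha G\cong M_n(\C)$ is $n$ copies of its simple module. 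As $M(P)\in\{1,C_p\}$, non-triviality of $\alpha|_P$ forces $P\cong C_p\times C_p$, and $\operatorname{res}^G_P$ then surjects onto $M(P)\cong C_p$, so $n=\operatorname{rad}(n)$ divides $|M(G)|$.

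\emph{Extracting the decomposition (the hard part).} Next I would invoke the Howlett--Isaacs theorem ($G$ is solvable), so that $G$ becomes a solvable A-group all of whose Sylow subgroups have rank $2$; a standard argument with the Fitting subgroup (equivalently Taunt's structure theory for solvable A-groups, specialised to this rank bound) should give that $G$ is metabelian, with $G'$ elementary abelian and complemented, say $G=G'\rtimes H$ with $G',H$ elementary abelian. For each prime $p$ the Sylow $p$-subgroup $(G')_p\times H_p$ is $C_p\times C_p$; I would rule out $(G')_p=H_p=C_p$ by feeding $N=G'$, $T=H$ into Tahara's formula (Theorem~\ref{TaharaSchurMultiplierCalculation}) and checking that $H^1(H,(G')^*)$ and $M(G')^H$ — hence $M(G)$ — then have trivial $p$-part, against the previous step. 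This yields $\gcd(|G'|,|H|)=1$, so $G'\cong C_m\times C_m$, $H\cong C_k\times C_k$ with $n=mk$ square-free. Finally, the action $\phi\colon H\to\operatorname{GL}_2(\mathbb{Z}/m)$ induces the determinant action on $M(C_m^2)\cong\wedge^2(\mathbb{Z}/m)^2\cong\mathbb{Z}/m$, so by the coprime multiplier formula (Lemma~\ref{eq:cohosemi}) $M(G)\cong(C_m)^{\det\circ\phi}\times C_k$; since $n=mk$ divides $|M(G)|$, necessarily $(C_m)^{\det\circ\phi}=C_m$, i.e.\ $\det\circ\phi$ is trivial and $\phi(H)\le\operatorname{SL}_2(m)$. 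This is~(1), and this paragraph is where I expect the real work to lie: the two conceptual inputs (nondegenerate $\Rightarrow$ $C_p^2$ Sylows; $\operatorname{GL}_2$ acts on the rank-one multiplier through $\det$) are clean, but pinning down the metabelian structure and running the cohomology bookkeeping to exclude the mixed-prime case is delicate.

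\emph{The remaining two parts.} For~(2) I would argue by contradiction: if $1\neq a\in G'$ is fixed by $H$ then $a\in G'\cap Z(G)$ and $|a|\mid m$; since $M(\langle a\rangle)=1$ one can scale so that $u_a^{|a|}=1$, and centrality of $a$ makes $g\mapsto\beta(g)$, where $u_gu_au_g^{-1}=\beta(g)u_a$, a homomorphism $G\to\C^*$ killing $G'$, hence factoring through $G/G'\cong C_k^2$ — so $\operatorname{ord}(\beta)\mid k$ — while conjugation preserving the order of $u_a$ gives $\operatorname{ord}(\beta)\mid|a|\mid m$; as $\gcd(m,k)=1$, $\beta=1$, so $a$ is $\alpha$-regular and thus $a=1$, a contradiction. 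For~(3), given $p\mid k$, $q\mid m$ (so $p\neq q$) and $L\le G$ non-abelian of order $p^2q^2$, I would note that the Sylow $q$-subgroup of $L$ is the unique normal Sylow $q$-subgroup $C_q^2\le G'$ of $G$, so $L\cap G'\cong C_q\times C_q$ is normal of index $p^2$ in $L$ and, by Schur--Zassenhaus, $L=(C_q\times C_q)\rtimes U$ with $U\cong C_p\times C_p$ acting through a (conjugate of a restriction of) $\phi$, hence via a nontrivial homomorphism $U\to\operatorname{SL}_2(q)$. Since $\operatorname{SL}_2(q)$ has cyclic Sylow subgroups at odd primes and generalised quaternion Sylow $2$-subgroups, it has no $C_p\times C_p$, so this homomorphism has kernel of order $p$ and cyclic image; a nontrivial $p$-element of $\operatorname{SL}_2(q)$ (with $p\neq q$) is semisimple with eigenvalues $\lambda,\lambda^{-1}\neq 1$, hence fixes no nonzero vector of $\F_q^2$, and one concludes that $Z(L)$ is exactly the kernel of the action, of order $p$.
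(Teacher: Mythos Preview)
The paper does not prove this theorem: it is explicitly recorded from \cite[Theorem~A]{ginosargradings} (``In order to deal with groups of central type of order $n^2$ where $n$ is square-free we record the following theorem from \cite{ginosargradings}'') and used as a black box throughout \S\ref{se:groupsofct}. There is therefore no proof in the present paper to compare your proposal against; the comparison would have to be made with the original source.

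That said, a brief assessment of your outline on its own terms. The opening move --- that $\operatorname{res}^G_P[\alpha]$ is nontrivial for every Sylow $P$, hence $P\cong C_p\times C_p$ --- is correct and nicely argued via the module structure of $\C^\alpha G$ over $\C^\alpha P$. Your treatments of (2) and (3) are also sound: the $\alpha$-regularity argument for (2) is exactly the kind of reasoning the paper itself uses elsewhere (cf.\ the end of \S\ref{s:pre}), and the $\operatorname{SL}_2(\mathbb{F}_q)$ Sylow analysis for (3) is standard and correct.

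The genuine gap is the step you yourself flag: passing from ``solvable A-group with all Sylow subgroups of rank~$2$'' to ``metabelian with $G'$ elementary abelian and complemented''. A-groups are not metabelian in general, and the rank-$2$ constraint alone does not obviously bound the derived length; some additional leverage from the central-type hypothesis (beyond Howlett--Isaacs solvability) appears to be needed here, and your sketch does not supply it. The Tahara bookkeeping that follows, ruling out the mixed-prime Sylow case, presupposes this unestablished structure. So the architecture of your Part~(1) is plausible, but as written it is an outline with a real hole at its centre rather than a proof.
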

 On the other hand, groups of central type can be obtained in the following way.
 \begin{theorem}\label{th:ctotherdirection}\cite[Theorem 3.17]{ginosargradings}
  Let $K$ be a group of central type of cardinality prime to $m$ which acts on $C_m\times C_m$ via an element $\varphi \in \operatorname{SL}_2(m)$. Then
  $$(C_m\times C_m)\rtimes _{\varphi} K$$
  is of central type.
 \end{theorem}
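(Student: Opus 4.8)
The plan is to produce an explicit nondegenerate cohomology class on $G := (C_m\times C_m)\rtimes_\varphi K$, i.e.\ a class $[\alpha]\in M(G)$ for which $\mathbb{C}^{\alpha}G$ is a full matrix algebra over $\mathbb{C}$. Write $A = C_m\times C_m$. By Corollary~\ref{cor:SchurofCn} we have $M(A)\cong C_m$, a generator being represented by the ``symplectic'' $2$-cocycle $\mu$ on $A = \langle x\rangle\times\langle y\rangle$ determined by $[u_x,u_y] = \zeta_m$ for a primitive $m$-th root of unity $\zeta_m$; this class is nondegenerate, since $\mathbb{C}^{\mu}A\cong\mathbb{C}^{m\times m}$. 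The first point is that $[\mu]$ is $K$-invariant: the action of $\operatorname{Aut}(A) = \operatorname{GL}_2(m)$ on $M(A)\cong C_m$ multiplies the commutator pairing by the determinant, so $\operatorname{SL}_2(m)$ acts trivially on $M(A)$; since $K$ acts on $A$ through $\varphi\in\operatorname{SL}_2(m)$, it fixes $[\mu]$.

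Next I would push $[\mu]$ up to $G$ using that $|K|$ and $m$ are coprime. In Tahara's exact sequence (Theorem~\ref{TaharaSchurMultiplierCalculation}) for the semidirect product $G = A\rtimes K$, the terms $H^1(K,A^*)$ and $H^2(K,A^*)$ vanish for order reasons, so restriction maps $M(G)$ onto $M(A)^K$. As $[\mu]\in M(A)^K$, there is $[\alpha_0]\in M(G)$ whose restriction to $A$ is $[\mu]$; after adjusting $\alpha_0$ by a coboundary of $G$ we may assume the cocycle $\alpha_0$ literally restricts on $A\times A$ to a cocycle $\mu$ representing this nondegenerate class. Fix such an $\alpha_0$. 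Then $\mathbb{C}^{\mu}A = \bigoplus_{a\in A}\mathbb{C}u_a$ is a subalgebra of $\mathbb{C}^{\alpha_0}G$ isomorphic to $\mathbb{C}^{m\times m}$; since $K$ normalizes $A$, conjugation by each $u_k$ ($k\in K$) restricts to an algebra automorphism $\sigma_k$ of $\mathbb{C}^{\mu}A$; and $\mathbb{C}^{\alpha_0}G = \bigoplus_{k\in K}\mathbb{C}^{\mu}A\cdot u_k$ is a crossed product of the matrix algebra $\mathbb{C}^{\mu}A$ with $K$.

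Now comes the main step. By Skolem--Noether every automorphism of $\mathbb{C}^{\mu}A\cong\mathbb{C}^{m\times m}$ is inner, say $\sigma_k = \operatorname{Ad}(\pi_k)$ with $\pi_k\in(\mathbb{C}^{\mu}A)^\times$. Replacing $u_k$ by $v_k := \pi_k^{-1}u_k$ produces elements which commute with $\mathbb{C}^{\mu}A$ and satisfy $v_kv_l = \rho(k,l)v_{kl}$ for a scalar $2$-cocycle $\rho\in Z^2(K,\mathbb{C}^*)$, whence
\[\mathbb{C}^{\alpha_0}G\ \cong\ \mathbb{C}^{\mu}A\otimes_{\mathbb{C}}\mathbb{C}^{\rho}K\ \cong\ \mathbb{C}^{m\times m}\otimes_{\mathbb{C}}\mathbb{C}^{\rho}K.\]
The class $[\rho]\in M(K)$ depends on $[\alpha_0]$ only through inflation: if $\alpha_0$ is replaced by $\alpha_0\cdot\operatorname{inf}(\eta)$ for $\eta\in Z^2(K,\mathbb{C}^*)$ (inflation along $G\twoheadrightarrow K$), then $\mathbb{C}^{\mu}A$ and the $\sigma_k$ are unchanged while each product $u_ku_l$ acquires the factor $\eta(k,l)$, so $[\rho]$ is multiplied by $[\eta]$. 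Hence $[\rho]$ can be made equal to any prescribed class in $M(K)$. Since $K$ is of central type, $M(K)$ contains a nondegenerate class; choosing $[\alpha]$ restricting to $[\mu]$ with $[\rho]$ nondegenerate gives $\mathbb{C}^{\rho}K\cong\mathbb{C}^{t\times t}$ with $t^2 = |K|$, so
\[\mathbb{C}^{\alpha}G\ \cong\ \mathbb{C}^{m\times m}\otimes_{\mathbb{C}}\mathbb{C}^{t\times t}\ \cong\ \mathbb{C}^{mt\times mt},\qquad (mt)^2 = m^2|K| = |G|,\]
so $[\alpha]$ is nondegenerate and $G$ is of central type.

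The conceptual core is the crossed-product/Skolem--Noether identification $\mathbb{C}^{\alpha}G\cong\mathbb{C}^{m\times m}\otimes\mathbb{C}^{\rho}K$ together with the fact that $[\rho]$ can be freely prescribed in $M(K)$; this is exactly where the three hypotheses enter. The condition $\varphi\in\operatorname{SL}_2(m)$ is what makes the nondegenerate class $[\mu]$ $K$-invariant; coprimality of $|K|$ and $m$ is what lets $[\mu]$ extend to $G$ (vanishing of $H^1(K,A^*)$ and $H^2(K,A^*)$); and $K$ being of central type provides the nondegenerate $[\rho]$ that turns $\mathbb{C}^{\rho}K$ into a matrix algebra after the adjustment. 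The remaining work — checking that $\rho$ is a genuine scalar $2$-cocycle, that the two displayed maps are algebra isomorphisms, and tracking the effect of $\operatorname{inf}(\eta)$ — is routine, and I do not expect it to present difficulties.
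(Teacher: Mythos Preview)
The paper does not prove this theorem; it is merely quoted from \cite[Theorem 3.17]{ginosargradings} without argument, so there is no in-paper proof to compare your proposal against.

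That said, your argument is correct and self-contained. Each ingredient checks out: the nondegenerate class $[\mu]$ on $A=C_m\times C_m$ is $K$-invariant precisely because $\operatorname{GL}_2(\mathbb{Z}/m)$ acts on $M(A)\cong C_m$ via the determinant and $\varphi\in\operatorname{SL}_2(m)$; coprimality of $|K|$ and $m$ kills $H^i(K,A^*)$ for $i\ge 1$ so $[\mu]$ lifts (Lemma~\ref{eq:cohosemi} gives this directly); the Skolem--Noether step is standard, and the element $\pi_k^{-1}\sigma_k(\pi_l^{-1})\pi_{kl}$ is scalar because it is central in the simple algebra $\mathbb{C}^{\mu}A$, which makes $\rho$ a genuine $\mathbb{C}^*$-valued $2$-cocycle; finally, the inflation adjustment shows $[\rho]$ ranges over all of $M(K)$, so a nondegenerate choice exists since $K$ is of central type. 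One small point worth stating explicitly when you write this up: the decomposition $\mathbb{C}^{\alpha_0}G=\bigoplus_{k\in K}\mathbb{C}^{\mu}A\cdot u_k$ uses that $K$ is a genuine complement of $A$ in the semidirect product, so the $u_k$ with $k\in K$ form a transversal and $u_ku_l$ is a scalar multiple of $u_{kl}$ rather than of some $u_{akl}$ with $a\in A$. This is true here and you are clearly aware of it, but it is the one place a reader might pause.
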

 The following lemma will be the key in the proof of both cases of Theorem~\ref{th:centyp}.
\begin{lemma}\label{lemma:equictgr}
 Let $G$ and $H$ be groups of central type of cardinality $n^2$ where $n$ is square-free such that $\C G\cong \C H$.
 Then $G'\cong H'$ and $Z(G)\cong Z(H)$.
\end{lemma}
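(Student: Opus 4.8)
The plan is to recover $G'$ and $Z(G)$ from $\C G$ by reading off two invariants of the Wedderburn decomposition --- the number of one-dimensional components and the dimension of the largest component --- using the structural description of Theorem~\ref{th:squarefreedec} throughout. By that theorem, $G\cong G'\rtimes(C_k\times C_k)$ with $G'\cong C_m\times C_m$ and the $C_k\times C_k$-action realized in $\operatorname{SL}_2(m)$; comparing orders gives $(mk)^2=|G|=n^2$, so $mk=n$, and since $n$ is square-free the factors satisfy $\gcd(m,k)=1$. Likewise $H\cong H'\rtimes(C_{k'}\times C_{k'})$ with $H'\cong C_{m'}\times C_{m'}$ and $m'k'=n$. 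As $\C G\cong\C H$, these algebras have equally many $1\times 1$ matrix blocks, i.e. $|G/G'|=|H/H'|$; with $|G|=|H|=n^2$ this forces $m=m'$, hence $G'\cong C_m\times C_m\cong H'$, which is the first assertion.

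Next I would reduce $Z(G)\cong Z(H)$ to the numerical equality $|Z(G)|=|Z(H)|$. By Theorem~\ref{th:squarefreedec}(2) the identity is the only element of $G'$ fixed by $C_k\times C_k$, so $Z(G)\cap G'=1$; a direct computation in the semidirect product then identifies $Z(G)$ with the kernel of the conjugation map $C_k\times C_k\to\operatorname{Aut}(G')$, so $Z(G)$ is a subgroup of $C_k\times C_k$. Since $k$ is square-free, $C_k\times C_k\cong\prod_{p\mid k}(C_p\times C_p)$, and any subgroup is the product of its $p$-parts, each elementary abelian of rank at most $2$ and hence determined up to isomorphism by its order. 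Thus a subgroup of $C_k\times C_k$ is determined up to isomorphism by its order, and it suffices to prove $|Z(G)|=|Z(H)|$.

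Finally I would extract $|Z(G)|$ from $\C G$. Put $V=G'$, $W=C_k\times C_k$, $\overline W=W/Z(G)$, and recall $\gcd(|V|,|W|)=1$. As $V$ is abelian and the orders are coprime, every $W$-stable linear character of $V$ extends to its stabilizer in $G$, so Clifford theory describes $\operatorname{Irr}(G)$ as the pairs $(\mathcal O,\lambda)$ where $\mathcal O$ is a $W$-orbit on the character group $V^*$ and $\lambda$ is a linear character of the stabilizer $W_\chi$ of a point $\chi\in\mathcal O$, the corresponding degree being $|\mathcal O|=[W:W_\chi]$. Since $Z(G)\le W_\chi$ for every $\chi$, each irreducible degree of $G$ divides $[W:Z(G)]=k^2/|Z(G)|$, and I would show this bound is attained, i.e. that $\overline W$ has a regular orbit on $V^*$. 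Writing $V=\prod_{q\mid m}V_q$ with $V_q\cong\mathbb F_q^2$, Theorem~\ref{th:squarefreedec}(1) says $W$ acts on $V_q$, hence on $V_q^*$, through $\operatorname{SL}_2(\mathbb F_q)$, and the image $\overline W_q$ of $\overline W$ there is an abelian, hence cyclic, subgroup of $\operatorname{SL}_2(\mathbb F_q)$ (every abelian subgroup lies in a maximal torus or in $\langle -I\rangle$ times a root subgroup, all cyclic). A short case check (split torus, non-split torus, $\pm$unipotent) shows a cyclic subgroup of $\operatorname{SL}_2(\mathbb F_q)$ has a vector of $\mathbb F_q^2$ with trivial stabilizer, so, dualizing, one can pick $\chi_q\in V_q^*$ with trivial $\overline W_q$-stabilizer. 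Then $\chi=(\chi_q)_q$ has $\overline W$-stabilizer $\bigcap_q\ker(\overline W\to\operatorname{SL}_2(\mathbb F_q))=1$ by faithfulness of the action on $V$, so $\operatorname{Stab}_W(\chi)=Z(G)$ and the orbit of $\chi$ has size $k^2/|Z(G)|$. Hence the largest irreducible degree of $G$ equals $k^2/|Z(G)|$; as both $k^2$ (the number of linear characters) and this largest degree are invariants of $\C G\cong\C H$, we obtain $|Z(G)|=|Z(H)|$, and therefore $Z(G)\cong Z(H)$.

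The semidirect-product bookkeeping and the primary decomposition of $Z(G)$ are routine; the real content is the last step --- pinning down $|Z(G)|$ as $k^2$ divided by the dimension of the largest Wedderburn block --- which rests on the cyclicity of abelian subgroups of $\operatorname{SL}_2(\mathbb F_q)$ and the existence of a free vector for such cyclic subgroups. This is also where one must be careful to use coprimality (so that the Clifford-theoretic degrees really are exactly the orbit sizes, with no twist contributions).
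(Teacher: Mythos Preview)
Your argument is correct, and it differs in strategy from the paper's. Both proofs begin identically: $|G/G'|=|H/H'|$ from counting linear characters, so by Theorem~\ref{th:squarefreedec} we get $G'\cong H'\cong C_m\times C_m$ with $mk=n$. For the centers, however, the paper works one prime of $k$ at a time: for each $p\mid k$ it detects whether the $p$-part of $Z(G)$ is $p^2$, $p$, or $1$ by checking whether no irreducible degree is divisible by $p$, or some degree is divisible by $p^2$, and it produces degrees divisible by $p^2$ by invoking Theorem~\ref{th:squarefreedec}(3) to find two primes $q,r\mid m$ on whose Sylow parts the $p$-Sylow of $W$ acts with distinct cyclic kernels. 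You instead extract $|Z(G)|$ in one stroke as $k^2$ divided by the largest irreducible degree of $G$, and then use the observation that subgroups of $C_k\times C_k$ (with $k$ square-free) are determined up to isomorphism by their order. Your route is cleaner in that it singles out a single Wedderburn invariant; the paper's route stays closer to the structural input of Theorem~\ref{th:squarefreedec} and avoids talking about $\operatorname{SL}_2(\mathbb{F}_q)$ explicitly. In fact both approaches ultimately need the same ``regular orbit'' ingredient---the paper tacitly uses that a faithful $C_p$-action through $\operatorname{SL}_2(\mathbb{F}_q)$ has a free vector when it asserts an orbit of length $p^2$---which you make explicit via the cyclicity of abelian subgroups of $\operatorname{SL}_2(\mathbb{F}_q)$.

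Two minor remarks. First, coprimality of $|V|$ and $|W|$ is not actually needed for the extension step: since $G=V\rtimes W$ splits, the formula $\tilde\chi(vw)=\chi(v)$ already extends any $\chi\in V^*$ to its inertia group $V\rtimes W_\chi$, so the Clifford degrees are exactly the orbit sizes regardless. Second, in your case analysis for free vectors you may drop the $\pm$unipotent case entirely: because $\gcd(|W|,m)=1$, the image $\overline{W}_q\leq\operatorname{SL}_2(\mathbb{F}_q)$ has order prime to $q$, hence is completely reducible and lies in a (split or non-split) maximal torus, where every nonzero vector is free.
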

\begin{proof}
Recall that the number of $1$-dimensional representations of a group $G$ is equal to the cardinality of $G/G'$.
Therefore, $\C G\cong \C H$ implies that $|G'|=|H'|$ and consequently by Theorem~\ref{th:squarefreedec} there exists $m\in \mathbb{N}$ such that
$G'\cong H'\cong C_m\times C_m.$
Now let
$$G=\left(C_m\times C_m\right)\rtimes_{\psi} (C_k\times C_k), \ \ H\cong \left(C_m\times C_m\right)\rtimes_{\varphi} ( C_k\times C_k),$$
and let $p$ be a prime divisor of $k$.
By Theorem~\ref{th:squarefreedec}(2), $Z(G)=\text{ker}(\psi)$ and $Z(H)=\text{ker}  (\varphi)$.
Therefore, by showing $\text{ker}  (\psi)\cong \text{ker}  (\varphi)$ we prove $Z(G)\cong Z(H)$.
If ${\operatorname{ker}}(\psi)$ contains a subgroup isomorphic to $C_p\times C_p$ then there are no irreducible representation of $G$ whose dimension is divisible by $p$.
Since, $\C G\cong \C H$, we know that if ${\operatorname{ker}}(\psi)$ contains a subgroup isomorphic to $C_p\times C_p$ then so does ${\operatorname{ker}}(\varphi)$.
Assume now that $p$ is not a divisor of ${\operatorname{ker}}(\psi)$.
By Theorem~\ref{th:squarefreedec} there exist primes $q$, $r$ which divide $m$ such that a Hall $pqr$-subgroup of $G$ is of the form
$$ \left(C_{qr}\times C_{qr}\right)\rtimes (C_p\times C_p) $$
and admits a trivial center.
In particular, a representation of $G'$ of dimension $(qr)^2$ has a $\psi$-orbit of length divisible by $p^2$ and therefore $G$ admits an irreducible representation of dimension divisible by $p^2$.
Since $\C G\cong \C H$, it follows that $H$ also admits an irreducible representation of
dimension divisible by $p^2$ and consequently, by similar considerations $p$ is not a divisor of ${\operatorname{ker}}(\varphi)$ and hence $\text{ker}  (\psi)\cong \text{ker}  (\varphi)$.
\end{proof}
We record the following observation.
\begin{remark}\label{re:abaction}
Let $K$ be a nilpotent group acting on the groups $N_1,N_2$ and let $G=N_1\rtimes K, H=N_2\rtimes K$ be the groups corresponding to these actions.
Then, $G\cong H$ if and only if $N_1\rtimes P \cong N_2\rtimes P$ for any Sylow subgroup $P$ of $K$.
\end{remark}
Using this remark we prove in Proposition \ref{prop:cyclicaction1} and Proposition \ref{prop:atmosttwoprimes} that under some conditions groups of central type are $\sim_\mathbb{C}$-singletons.
\begin{proposition}\label{prop:cyclicaction1}
Let $G$ be a group of central type of cardinality $n^2$ such that $n$ is square-free.
If $\frac{|G|}{|G'||Z(G)|}$ is square-free then $G$ is a $\sim _{\C}$-singleton.
\end{proposition}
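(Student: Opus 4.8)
The plan is to reduce the statement, via Remark~\ref{re:abaction} and Lemma~\ref{lemma:equictgr}, to a purely arithmetic/group-theoretic comparison of the actions defining $G$ and a hypothetical partner $H$ with $G\sim_\mathbb{C}H$. Since $G\sim_\mathbb{C}H$ forces $\mathbb{C}G\cong\mathbb{C}H$, Lemma~\ref{lemma:equictgr} gives $G'\cong H'\cong C_m\times C_m$ and $Z(G)\cong Z(H)$, so by Theorem~\ref{th:squarefreedec} we may write
\[
G=(C_m\times C_m)\rtimes_\psi(C_k\times C_k),\qquad H=(C_m\times C_m)\rtimes_\varphi(C_k\times C_k),
\]
with the same $m$ and $k$, and $\ker\psi\cong\ker\varphi=Z(G)=Z(H)$. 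The hypothesis that $\frac{|G|}{|G'||Z(G)|}=\frac{k^2}{|Z(G)|}$ is square-free means that for each prime $p\mid k$ exactly one of the two ``$C_p$''-factors of the $p$-part of $C_k\times C_k$ survives in the quotient by $Z(G)$; equivalently, the image of $\psi$ (and of $\varphi$) restricted to the Sylow $p$-subgroup of $C_k\times C_k$ is cyclic of order $p$. So on each such Sylow subgroup the action factors through a cyclic group $C_p$ acting on $C_m\times C_m$ via an element of $\operatorname{SL}_2(m)$ of order $p$.

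The heart of the argument is then to show that for a cyclic group $C_p$ acting faithfully on $C_m\times C_m$ (with $m$ square-free, $p\nmid m$) through $\operatorname{SL}_2$, the isomorphism type of the resulting semidirect product $(C_m\times C_m)\rtimes C_p$ is determined by the multiset of Artin--Wedderburn data of its complex group algebra — i.e. that $\mathbb{C}G\cong\mathbb{C}H$ already forces $(C_m\times C_m)\rtimes_{\psi_p}C_p\cong(C_m\times C_m)\rtimes_{\varphi_p}C_p$ for each prime $p\mid k$. Here I would decompose $C_m\times C_m=\prod_{q\mid m}(C_q\times C_q)$, observe that a generator of $\operatorname{SL}_2(q)$ of order $p$ is, up to conjugacy in $\operatorname{GL}_2(q)$, one of a small explicit list of matrices (for $p\mid q-1$ it is $\operatorname{diag}(\zeta,\zeta^{-1})$ with $\zeta$ of order $p$; for $p\mid q+1$ it is a non-split torus element; the case $p=q$ is excluded since $p\nmid m$), and that the orbit-length statistics of $C_p$ acting on the characters of $C_q\times C_q$ — hence the list of simple component sizes and multiplicities of $\mathbb{C}\big((C_q\times C_q)\rtimes C_p\big)$ — pin down which conjugacy class in $\operatorname{SL}_2(q)$ the action realizes, and two actions related by $\operatorname{GL}_2$-conjugacy give isomorphic semidirect products. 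Running this over all $q\mid m$ and invoking Remark~\ref{re:abaction} to reassemble across the primes $q$, and then across the primes $p\mid k$, yields $G\cong H$.

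I expect the main obstacle to be the bookkeeping in the last step: making sure that the complex group algebra ``sees'' each Sylow subgroup $C_p$ of the acting group separately, i.e. that the multiset $\{\mathbb{C}^\alpha\text{-data}\}$ decomposes compatibly with the product decomposition $C_k\times C_k=\prod_p(\text{Sylow}_p)$ so that Remark~\ref{re:abaction} can be applied one prime at a time. Concretely, one must argue that the irreducible character degrees of $G$ determine, for each $p\mid k$, the conjugacy class of the order-$p$ generator of the $p$-part of the action on each factor $C_q\times C_q$ of $G'$; this uses that the degrees of irreducible representations of $G$ are exactly the orbit lengths of $C_k\times C_k$ on the characters of $G'$ (since $G$ is of central type with $Z(G)$ acting trivially and the stabilizers act ``freely enough'' by Theorem~\ref{th:squarefreedec}(3)), and that distinct $\operatorname{SL}_2(q)$-conjugacy classes of order $p$ produce distinguishable orbit-length profiles. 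Once this identification of the per-prime action data from $\mathbb{C}G$ is established, combining it with $\ker\psi\cong\ker\varphi$ and Remark~\ref{re:abaction} finishes the proof, and any non-isomorphic $H$ would already fail $\mathbb{C}G\cong\mathbb{C}H$, a fortiori $G\not\sim_\mathbb{C}H$.
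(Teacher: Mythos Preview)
Your overall strategy---reduce via Lemma~\ref{lemma:equictgr} and Remark~\ref{re:abaction} to a per-prime-$p$ comparison of $\tilde G=(C_m\times C_m)\rtimes_\varphi(C_p\times C_p)$ with $\tilde H=(C_m\times C_m)\rtimes_\psi(C_p\times C_p)$, and then use the square-freeness hypothesis to see that each such action factors through a single $C_p$---matches the paper exactly. The divergence comes at the final step. The paper does not attempt to extract the $\operatorname{SL}_2(q)$-conjugacy class of the action from character-degree data; once the action is known to be through $C_p$ it simply cites \cite[Lemma~3.20(ii)]{ginosargradings} to conclude $\tilde G\cong\tilde H$ directly. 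So the intended argument is much shorter than what you outline, and your detour through orbit statistics on each $C_q\times C_q$ is not the route taken.

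More importantly, there is a genuine gap in your reassembly step. You write ``running this over all $q\mid m$ and invoking Remark~\ref{re:abaction} to reassemble across the primes $q$'', but Remark~\ref{re:abaction} decomposes along the Sylow subgroups of the \emph{acting} nilpotent group $K$, not along the prime factors of the group $N$ being acted upon. And in fact knowing $(C_q\times C_q)\rtimes C_p$ for every $q\mid m$ does \emph{not} determine $(C_m\times C_m)\rtimes C_p$: if $p\mid q_1-1$ and $p\mid q_2-1$, an order-$p$ element of $\operatorname{SL}_2(q_1q_2)$ is a pair $(A_1,A_2)$, each $A_i$ unique up to $\operatorname{GL}_2(q_i)$-conjugacy and raising to a power $j_i$, but the powers must be the \emph{same} $j\in(\mathbb{Z}/p)^*$ to yield an isomorphism of the full semidirect product---an alignment constraint your per-$q$ analysis cannot see. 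Your orbit-length idea does not resolve this either: every faithful $C_p$-action through $\operatorname{SL}_2(q)$ gives the identical orbit profile on the characters of $C_q\times C_q$ (one fixed character, all other orbits of length $p$), so the Artin--Wedderburn data of $(C_q\times C_q)\rtimes C_p$ carries no information about which $\operatorname{GL}_2(q)$-class $A_i$ lies in. The passage from per-$q$ information back to the global $(C_m\times C_m)\rtimes C_p$ is therefore exactly the point that needs an external input, and that is what the paper's citation of \cite[Lemma~3.20(ii)]{ginosargradings} is meant to supply.
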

\begin{proof}
Let $G, H \in \Omega _{n^2}$ such that $G\sim _{\C}H$.
By Theorem~\ref{th:squarefreedec} and Lemma~\ref{lemma:equictgr} there exist $m,k$ such that
$$G\cong \left(C_m\times C_m\right)\rtimes _{\varphi} (C_k\times C_k) \text{ and } H\cong \left(C_m\times C_m\right)\rtimes _{\psi} (C_k\times C_k).$$
We will show that for any $p$ which divides $k$
$$\tilde{G}=\left(C_m\times C_m\right)\rtimes _{\varphi} (C_p\times C_p) \cong \tilde{H}=\left(C_m\times C_m\right)\rtimes _{\psi} (C_p\times C_p).$$
If this is the case, then $G\cong H$ by Remark~\ref{re:abaction}.
Now, by Lemma~\ref{lemma:equictgr}, if $Z(G)$ contains a subgroup isomorphic to $C_p \times C_p$, then also $Z(H)$ contains a subgroup isomorphic to $C_p\times C_p$. So in this case it is clear that $\tilde{G}\cong \tilde{H}$.
Assume that $p^2$ is not a divisor of $|Z(\tilde{G})|$, then clearly $p^2$ is also not a divisor of $|Z(\tilde{H})|$.
By the condition of the proposition $p$ is a divisor of $|Z(\tilde{G})|$ and $|Z(\tilde{H})|$ and therefore the action in both cases is a $C_p$ -action.
Applying \cite[Lemma 3.20 $(ii)$]{ginosargradings} we get that $\tilde{G}\cong \tilde{H}$ which completes the proof.
\end{proof}
\begin{proposition}\label{prop:atmosttwoprimes}
Let $G$ be a group of central type of order $n^2$ where $n$ is a square-free number such that $|G'|$ is divisible by at most two primes.\\
 Then $G$ is a $\sim _{\C}$-singleton.
\end{proposition}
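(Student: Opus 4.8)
The goal is to show that $G \sim_{\mathbb{C}} H$ implies $G \cong H$, so that $G$ is a $\sim_{\mathbb{C}}$-singleton. Assume $G \sim_{\mathbb{C}} H$. Then $H$ is again of central type, $|H| = |G| = n^2$ with $n$ square-free, and $\mathbb{C}G \cong \mathbb{C}H$, so Lemma~\ref{lemma:equictgr} gives $G' \cong H'$ and $Z(G) \cong Z(H)$. By Theorem~\ref{th:squarefreedec} there are $m, k \in \mathbb{N}$, the same for $G$ and $H$ since $|G| = |H|$ and $|G'| = |H'|$, with
\[ G \cong (C_m \times C_m) \rtimes_\varphi (C_k \times C_k), \qquad H \cong (C_m \times C_m) \rtimes_\psi (C_k \times C_k), \]
where $\varphi, \psi$ take values in $\mathrm{SL}_2(m)$. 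Since $mk = n$ is square-free, $m$ and $k$ are coprime; and since $|G'| = m^2$ is divisible by at most two primes and $m$ divides $n$, either $m = 1$ (so $G$ is abelian and we are done by Lemma~\ref{lemma:abcase}), $m = q$, or $m = q_1 q_2$ for distinct primes $q_1, q_2$.

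As $C_k \times C_k$ is abelian, hence nilpotent, and $k$ is square-free, its Sylow $p$-subgroup for a prime $p \mid k$ is $C_p \times C_p$. So by Remark~\ref{re:abaction} it suffices to prove, for every $p \mid k$, that
\[ \tilde{G} := (C_m \times C_m) \rtimes_{\varphi_p} (C_p \times C_p) \ \cong \ (C_m \times C_m) \rtimes_{\psi_p} (C_p \times C_p) =: \tilde{H}, \]
where $\varphi_p, \psi_p$ are the restrictions of $\varphi, \psi$. The key point is that for every prime $q$ the abelian $p$-subgroups of $\mathrm{SL}_2(\mathbb{F}_q)$ are cyclic: a Sylow $p$-subgroup of $\mathrm{SL}_2(\mathbb{F}_q)$ is cyclic of order $p$ when $p = q$, cyclic when $p \neq q$ is odd, and generalized quaternion (so with a unique involution) when $p = 2 \neq q$. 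Using $\mathrm{SL}_2(m) \cong \prod_{q \mid m} \mathrm{SL}_2(\mathbb{F}_q)$, each coordinate of $\varphi_p$ therefore has cyclic image, so $\ker \varphi_p \in \{1, C_p, C_p \times C_p\}$, and $\ker \varphi_p \neq 1$ when $m = q$. Finally, by Theorem~\ref{th:squarefreedec}(2) we have $Z(G) = \ker \varphi$ and $Z(H) = \ker \psi$, and these kernels decompose as direct products over the primes dividing $k$ into the subgroups $\ker \varphi_p$ and $\ker \psi_p$; hence $Z(G) \cong Z(H)$ forces $\ker \varphi_p \cong \ker \psi_p$ for every $p \mid k$.

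Now fix $p \mid k$. If $\ker \varphi_p \cong C_p \times C_p$, then also $\ker \psi_p \cong C_p \times C_p$, and $\tilde{G}$, $\tilde{H}$ are both the direct product $(C_m \times C_m) \times (C_p \times C_p)$, so there is nothing to prove. If $\ker \varphi_p \cong C_p$, then both $\varphi_p$ and $\psi_p$ factor through cyclic groups, and I would argue as in the proof of Proposition~\ref{prop:cyclicaction1}: for a single prime $q$ (and $p \neq q$, as $m$ and $k$ are coprime) there is, up to conjugacy in $\mathrm{GL}_2(\mathbb{F}_q)$ and up to powers, essentially only one faithful action of $C_p$ on $C_q \times C_q$ by an element of $\mathrm{SL}_2(\mathbb{F}_q)$, since the split, non-split and unipotent possibilities are mutually exclusive for a fixed pair $(p, q)$; combining this with $\mathbb{C}G \cong \mathbb{C}H$ and $Z(G) \cong Z(H)$ to control the orbit and degree data, \cite[Lemma~3.20]{ginosargradings} yields $\tilde{G} \cong \tilde{H}$.

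The remaining case $\ker \varphi_p = 1$ is the main obstacle. By the rank bound above it forces $m = q_1 q_2$, with the two coordinates $\varphi_{p,1}, \varphi_{p,2}$ of $\varphi_p$ having distinct kernels of order $p$. Writing $C_p \times C_p = \langle g_1 \rangle \times \langle g_2 \rangle$ with $\langle g_i \rangle = \ker \varphi_{p, 3-i}$, one checks that $g_1$ centralizes the $q_2$-part and $g_2$ centralizes the $q_1$-part, so that
\[ \tilde{G} \ \cong \ \big[ (C_{q_1} \times C_{q_1}) \rtimes C_p \big] \times \big[ (C_{q_2} \times C_{q_2}) \rtimes C_p \big], \]
a direct product of two non-abelian groups of orders $p^2 q_1^2$ and $p^2 q_2^2$, and the same decomposition holds for $\tilde{H}$ since $\ker \psi_p = 1$ as well. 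It then remains to identify the two factors of $\tilde{G}$ with those of $\tilde{H}$, and this is where I expect the bulk of the work to lie: one reads off from $\mathbb{C}G \cong \mathbb{C}H$ the multiset of irreducible degrees that are products of a power of $q_i$ and a power of $p$, and combines this with $Z(G) \cong Z(H)$ and the analysis of the semidirect products $(C_{q_i} \times C_{q_i}) \rtimes C_p$ occurring inside groups of central type of order $p^2 q_i^2$ — that is, \cite[Lemma~3.20]{ginosargradings} together with \cite[Theorem~3.21]{ginosargradings} — to conclude that the corresponding factors agree. Once $\tilde{G} \cong \tilde{H}$ is established for all $p \mid k$, Remark~\ref{re:abaction} gives $G \cong H$, so $G$ is a $\sim_{\mathbb{C}}$-singleton.
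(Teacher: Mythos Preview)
Your overall strategy matches the paper's: reduce via Remark~\ref{re:abaction} to a single prime $p\mid k$, use $Z(G)\cong Z(H)$ to control the kernels of the restricted actions, dispose of the cases $\ker\varphi_p\in\{C_p\times C_p,\,C_p\}$ (the latter via \cite[Lemma~3.20(ii)]{ginosargradings}, exactly as in Proposition~\ref{prop:cyclicaction1}), and then treat the faithful case $\ker\varphi_p=1$, which forces $m=q_1q_2$. Your reduction is in fact written more carefully than the paper's: you extract $\ker\varphi_p\cong\ker\psi_p$ from the Sylow decomposition of $Z(G)=\ker\varphi$, whereas the paper simply says ``we may assume $k=p$'' and then invokes Lemma~\ref{lemma:equictgr} and Proposition~\ref{prop:cyclicaction1} on the reduced groups.

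Where you diverge is in the faithful case, and here you are making it harder than it is. You correctly obtain the decomposition
\[
\tilde G\cong\big[(C_{q_1}\times C_{q_1})\rtimes C_p\big]\times\big[(C_{q_2}\times C_{q_2})\rtimes C_p\big],
\]
and likewise for $\tilde H$, but then you propose to match the factors by reading off degree data from $\mathbb{C}G\cong\mathbb{C}H$. No such comparison is needed. Each factor $(C_{q_i}\times C_{q_i})\rtimes C_p$ comes from a faithful $C_p$-action through $\mathrm{SL}_2(q_i)$ satisfying the constraint of Theorem~\ref{th:squarefreedec}(3), and such a semidirect product is unique up to isomorphism --- this is precisely the content of \cite[Lemma~3.20(ii)]{ginosargradings} that you already used in the cyclic-kernel case. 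The paper phrases this by writing $\ker(\mathrm{res}\,\varphi)_{q_1}=\langle x\rangle$, $\ker(\mathrm{res}\,\varphi)_{q_2}=\langle y\rangle$ (and $\langle z\rangle,\langle w\rangle$ for $\psi$), observing that triviality of the center forces $\langle x\rangle\times\langle y\rangle=C_p\times C_p=\langle z\rangle\times\langle w\rangle$, and then simply sending $x\mapsto z$, $y\mapsto w$; this automorphism of the acting group extends to an isomorphism $\tilde G\cong\tilde H$ because the induced $C_p$-actions on each $q_i$-block agree up to the ambiguity absorbed by \cite[Lemma~3.20(ii)]{ginosargradings}. So the ``bulk of the work'' you anticipate is a one-line application of a lemma you have already invoked.
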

\begin{proof}
Assume $G\sim _{\C} H$.
By Lemma~\ref{lemma:equictgr}, $G'\cong H'$ and therefore by Theorem~\ref{th:squarefreedec} there exist $m,k\in \mathbb{N}$ such that
 $G$ and $H$ both admit a decomposition as
 $$\left(C_m\times C_m\right)\rtimes (C_k\times C_k),$$
 where $m$ is divisible by at most two primes.
 By Remark~\ref{re:abaction} it is enough to prove the theorem for the case $k = p$ a prime.
 So we may assume
 $$G=\left(C_m\times C_m\right)\rtimes _{\varphi} (C_p\times C_p), \ \  H=\left(C_m\times C_m\right)\rtimes _{\psi} (C_p\times C_p).$$
 By Lemma~\ref{lemma:equictgr} and Proposition~\ref{prop:cyclicaction1} the theorem holds in the case where $G$ (and therefore also $H$)
 admits a non-trivial center, and therefore we may assume that $Z(G)$ and $Z(H)$ are trivial.
 This settles also the case in which $m$ is prime, since by \cite[Theorem 3.21]{ginosargradings}, if $n$ is the product of two primes, then the center of $G$ (and $H$) is non-trivial.
 We are left with the case in which $m$ is the product of two primes $q_1\neq q_2$.
 By \cite[Theorem 3.21]{ginosargradings} the restriction of $\varphi$ (similarly of $\psi$) to the normal $q_1$- or $q_2$-subgroups have a cyclic kernel.
 Denote
 $$\operatorname{ker}(\operatorname{res}(\varphi)_{q_1})=\langle x\rangle,   \operatorname{ker}(\operatorname{res}(\varphi)_{q_2})=\langle y\rangle,
\operatorname{ker}(\operatorname{res}(\psi)_{q_1})=\langle z\rangle, \operatorname{ker}(\operatorname{res}(\psi)_{q_2})=\langle w\rangle.$$
 Since $G$ and $H$ admit a trivial center,
 $$\langle x\rangle\times \langle y\rangle\cong C_p\times C_p\cong \langle z\rangle\times \langle w\rangle.$$
 Consequently, the $C_p\times C_p$ isomorphism which maps $x$ to $z$ and $y$ to $w$ induces an isomorphism from $G$ to $H$.
\end{proof}
Next we wish to present an example of two non-isomorphic groups $G$ and $H$ of central type of order $n^2$, where $n$ is a square-free number, such that $G\sim _{\C} H$. In order to do so we first need to understand a method to evaluate dimensions of projective representations as described in \cite[Ch. 5, Sec. 2]{karpilovsky2}.

Let $m$ be a square-free number, $p$ a prime which is not a divisor of $m$ and let $G$ be a group of central type of order $m^2p^2$ such that
$G'\cong C_m\times C_m = \langle x,y \rangle$ and the action of $C_p\times C_p = \langle a \rangle \times \langle b \rangle$ is given by an element in ${\operatorname{SL}}_2(m)$.
Roughly speaking, a cohomology class $[\alpha]\in M(G)$ is determined by the relations
$$[u_x,u_y]=\zeta _m,[u_a,u_b]=\zeta _p,$$
where $\zeta _m,\zeta _p$ are, not necessarily primitive, $m$-th and $p$-th roots of unity, respectively.
Assume that $\zeta _m$ has order $d$, then
$$\C ^\alpha G' \cong \oplus _{i=1}^{(\frac{m}{d})^2} \C^{d \times d},$$
where here we call the restriction of $\alpha$ to $G'$ also $\alpha$.
We wish to determine the Artin-Wedderburn decomposition of $\C^\alpha G$ depending on whether $\zeta _p$ is trivial or not.
Notice, that in the case where $\zeta _p$ is of order $p$, by Lemma~\ref{lemma:cohdiv}, the dimension of each $\alpha$-projective representation is divisible by $p$.
Hence, in this case, by simple dimension considerations
$$\C ^\alpha G\cong \oplus _{i=1}^{(\frac{m}{d})^2} \C^{dp \times dp}.$$
On the other hand, if $\zeta _p$ is trivial, then
$$\C ^\alpha G\cong \C^{d \times d}\otimes \C H,$$
where $H\cong \left(C_{\frac{m}{d}}\times C_{\frac{m}{d}}\right)\rtimes (C_p\times C_p)$ is a subgroup of $G$.
By similar considerations the following holds.
\begin{proposition}\label{prop:projdim}
 Let $G$ be a group of central type of order $n^2$ where $n$ is a square-free number with a corresponding decomposition as in Theorem~\ref{th:squarefreedec}(1).
 Denote by $H$ the subgroup of $G$ isomorphic to $\left(C_{\frac{m}{d}}\times C_{\frac{m}{d}}\right)\rtimes (C_k\times C_k)$ where $d$ is a divisor of $m$.
 For any $[\alpha]\in M(G)$ such that ${\operatorname{res}}^G _{G'}[\alpha]$ has order $d$ the following hold
 \begin{enumerate}
  \item $$\C ^\alpha G\cong \C^{d \times d}\otimes \C ^\alpha H.$$
  \item If $d=1$ (that is ${\operatorname{res}}^G _{C_m\times C_m}[\alpha]$ is trivial), $k=p$ is prime and ${\operatorname{res}}^G _{C_k\times C_k}[\alpha]$ is non-trivial then
  if $\C G\cong A\oplus \left( \oplus_{i=1}^{p^2} \mathbb{C}\right)$ then $\C ^\alpha G\cong A\oplus \C^{p \times p}$.
  \end{enumerate}
\end{proposition}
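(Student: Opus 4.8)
The plan is to reduce everything to the case $G=N\rtimes H$ with $N$ a ``homogeneous'' abelian group $C_d\times C_d$, exploiting that $n$ is square-free. Writing $|G|=m^2k^2$ with $G'\cong C_m\times C_m$ and $G/G'\cong C_k\times C_k$ as in Theorem~\ref{th:squarefreedec}(1), the hypothesis that $n=mk$ is square-free gives $\gcd(m,k)=1$, and for a divisor $d\mid m$ also $\gcd(d,m/d)=\gcd(d,k)=1$. Splitting $G'$ according to the primes dividing $d$ produces a canonical, hence $(C_k\times C_k)$-invariant, decomposition $G'=N\times Z$ with $N\cong C_d\times C_d$ and $Z\cong C_{m/d}\times C_{m/d}$; setting $H:=Z\rtimes (C_k\times C_k)$ one obtains $N\trianglelefteq G$, $G=N\rtimes H$, $H\cong (C_{m/d}\times C_{m/d})\rtimes (C_k\times C_k)$ (the subgroup named in the statement), and crucially $\gcd(|N|,|H|)=1$.

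For part (1): since $\operatorname{res}^G_{G'}[\alpha]$ has order $d$ while its restriction to $Z$ has order dividing both $d$ and $m/d$ and hence is trivial, $\operatorname{res}^G_{N}[\alpha]$ has order exactly $d$, so it is the nondegenerate class on $C_d\times C_d$ and $\C^\alpha N\cong \C^{d\times d}$. Thus $\C^\alpha N$ is a central simple $\C$-subalgebra of $\C^\alpha G$ (containing the identity $u_1$), and the double centralizer theorem gives $\C^\alpha G\cong \C^{d\times d}\otimes_{\C} E$, where $E:=C_{\C^\alpha G}(\C^\alpha N)$ has $\C$-dimension $|G|/d^2=|H|$. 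Conjugation by each $u_h$ ($h\in H$) is an automorphism of $\C^\alpha N\cong \C^{d\times d}$ because $N\trianglelefteq G$, so by Skolem--Noether one may correct $u_h$ by an element of $\C^\alpha N$ to obtain an element of $E$; the corrected elements span $E$ and satisfy twisted-group-algebra relations over $H$, so $E\cong \C^{\gamma}H$ for some $[\gamma]\in M(H)$ and $\C^\alpha G\cong \C^{d\times d}\otimes_{\C}\C^{\gamma}H$.

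I expect the identification $[\gamma]=\operatorname{res}^G_H[\alpha]$ to be the main obstacle. The discrepancy $[\gamma]\cdot(\operatorname{res}^G_H[\alpha])^{-1}\in M(H)$ is the class of the projective representation of $H$ furnished by the Skolem--Noether lifts, equivalently the obstruction to lifting the homomorphism $H\to \operatorname{Aut}(\C^{d\times d})$ through the subgroup of automorphisms respecting the $N$-grading of $\C^\alpha N$ to $\operatorname{GL}_d(\C)$; that obstruction lies in $H^2(H,N^*)$, which vanishes because $\gcd(|N^*|,|H|)=\gcd(d^2,|H|)=1$. Concretely, coprimality forces $H^1(H,N^*)=H^2(H,N^*)=0$, so Tahara's theorem (Theorem~\ref{TaharaSchurMultiplierCalculation}) gives $M(G)=\tilde M(G)\times M(H)$ with $\tilde M(G)\cong M(N)^H$ realized by $\operatorname{res}^G_N$ and $\tilde M(G)=\ker(\operatorname{res}^G_H)$; choosing the representative $\alpha=\alpha_1\alpha_2$ with $\alpha_1$ trivial on $H\times H$ and $\alpha_2$ inflated from $G/N\cong H$, the elements $\{u_h\}_{h\in H}$ already span a copy of $\C^{\operatorname{res}^G_H\alpha}H$ and the Skolem--Noether correction leaves its cohomology class unchanged. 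This is the projective analogue of the classical splitting of a nondegenerate normal subgroup; cf.\ \cite[Ch.~5, Sec.~2]{karpilovsky2}.

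For part (2), $d=1$ means $\operatorname{res}^G_{G'}[\alpha]$ is trivial, and with $k=p$ the hypothesis that $\operatorname{res}^G_{C_p\times C_p}[\alpha]$ is non-trivial identifies $[\alpha]$ with the inflation to $G$ of a generator $[\bar\alpha]$ of $M(C_p\times C_p)\cong C_p$, which is nondegenerate; in particular $[\alpha]$ has order $p$. I would then run Clifford theory for $\C^\alpha G$ relative to $G'\trianglelefteq G$: the Wedderburn blocks are indexed by the $G$-orbits on $\widehat{G'}$, and since $[\alpha]$ restricts trivially to $G'$ the block attached to the orbit of $\chi\in\widehat{G'}$ depends on the twist only through the restriction of $[\bar\alpha]$ to $G_\chi/G'\leq G/G'\cong C_p\times C_p$. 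Now $(\widehat{G'})^G=\widehat{G'/[G,G']}$ is trivial: by Theorem~\ref{th:squarefreedec}(2) the only element of $G'$ fixed by $G$ is the identity, and since $|G'|$ and $|G/G'|$ are coprime $G'$ is a semisimple $G/G'$-module, forcing $[G,G']=G'$. Hence for $\chi\neq 1$ the group $G_\chi/G'$ is $\{1\}$ or $C_p$, which have trivial Schur multiplier, so the corresponding block is unaffected by the twist; the block for $\chi=1$ is $\C^{\bar\alpha}(G/G')\cong \C^{p\times p}$, whereas for the trivial cocycle it is $\C(G/G')\cong \oplus_{i=1}^{p^2}\C$. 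Since the $p^2$ one-dimensional representations of $G$ are precisely those trivial on $G'=[G,G]$, they make up exactly this block; so writing $\C G\cong A\oplus \oplus_{i=1}^{p^2}\C$ we conclude $\C^\alpha G\cong A\oplus \C^{p\times p}$.
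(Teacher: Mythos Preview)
Your argument is sound and considerably more explicit than the paper's, which derives the proposition from the preceding discussion by the single phrase ``by similar considerations the following holds'' together with a pointer to Clifford theory for projective representations in \cite[Ch.~5, Sec.~2]{karpilovsky2}. Your route---double centralizer plus Skolem--Noether for part~(1), and an explicit block-by-block Clifford analysis over $G'$ for part~(2)---is in the same spirit but makes the mechanism visible rather than citing it. In particular your handling of part~(2), reducing to the observation that for $\chi\neq 1$ the inertia quotient $G_\chi/G'$ is cyclic of order $1$ or $p$ and hence has trivial Schur multiplier, is exactly the content the paper leaves implicit; the key input that only the trivial character of $G'$ is $G$-fixed is precisely Theorem~\ref{th:squarefreedec}(2) plus the coprimality of $|G'|$ and $|G/G'|$, as you say.

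One step deserves tightening: the identification $[\gamma]=\operatorname{res}^G_H[\alpha]$ in part~(1). Your assertion that the discrepancy ``lies in $H^2(H,N^*)$'' is not quite right as stated. The Skolem--Noether lifts $v_h\in(\C^\alpha N)^\times\cong\operatorname{GL}_d(\C)$ give a projective representation $H\to\operatorname{PGL}_d(\C)$ whose obstruction cocycle $\sigma(h,h')=v_hv_{h'}v_{hh'}^{-1}$ lives a priori in $H^2(H,\C^*)$, and the graded-automorphism filtration you invoke does not by itself push it into $H^2(H,N^*)$; nor does the sentence ``the Skolem--Noether correction leaves its cohomology class unchanged'' in your concrete version actually prove anything beyond restating the goal. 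The clean fix still uses only $\gcd(d,|H|)=1$: normalize each lift so that $\det v_h=1$; then $\sigma(h,h')^d=\det(\sigma(h,h')I_d)=1$, so $[\sigma]$ lies in the image of $H^2(H,\mu_d)=0$. Equivalently, $[\sigma]^d=1$ combined with $[\sigma]^{|H|}=1$ forces $[\sigma]=1$. With this in hand, your decomposition $[\alpha]=[\alpha_1][\alpha_2]$ via Theorem~\ref{TaharaSchurMultiplierCalculation} (or more directly Lemma~\ref{eq:cohosemi} applied to $G=N\rtimes H$) yields $[\gamma]=\operatorname{res}^G_H[\alpha]$ as claimed.
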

The following example presents two non-isomorphic groups $G$ and $H$ of central type of order $n^2$ where $n$ is a square-free number such that $G\sim _{\C} H$. Here $|G'|=|H'|$ is divisible by three primes.

\begin{example}\label{ex:3primes}
The goal of this example is to present two non-isomorphic groups $G$ and $H$ of central type satisfying $G \sim _{\mathbb{C}}H$. It has been constructed together with Yuval Ginosar who we would like to thank at this place.

We fix a prime $q \geq 5$ and let $p_1$, $p_2$ and $p_3$ be three primes such that $\SL(2,p_i)$ contains an element of order $q$ for each $1 \leq i \leq 3$. Let $K= \langle x, y \rangle \cong C_q \times C_q$ and $K' = \langle x', y' \rangle \cong C_q \times C_q$. Let $N \cong C_{p_1 p_2 p_3} \times C_{p_1 p_2 p_3}$, i.e. the direct product of three elementary abelian groups of rank $2$. We define two groups
\[ G = N\rtimes _{\varphi_1}K, \ \ H = N\rtimes _{\varphi_2}K' \]
by specifying an action of $K$ and $K'$ on the Sylow subgroups of $N$, respectively. $K$ and $K'$ act on the Sylow $p_1$-subgroup with kernel $\langle y \rangle$ and $\langle y' \rangle$ and with $x$ and $x'$ acting by the same matrix of order $q$ in $\SL(2,p_1)$, respectively. Similarly, $K$ and $K'$ act on the Sylow $p_2$-subgroup of $N$ with kernel $\langle x \rangle$ and $\langle x' \rangle$ and with $y$ and $y'$ acting by the same matrix of order $q$ in $\SL(2,p_2)$, respectively. Finally, $x$ and $x'$ act by the same matrix of order $q$ from $\SL(2,p_3)$ on the Sylow $p_3$-subgroup of $N$ and the kernel of the action of $K$ is $\langle xy \rangle$ while the kernel of the action of $K'$ is $\langle x^2y \rangle$.

These two groups are of central type by \cite[Theorem 3.17]{ginosargradings} (also appearing above as Theorem~\ref{th:ctotherdirection}). We will first verify that $G \sim_\mathbb{C} H$. 
By Lemma~\ref{eq:cohosemi} and Corollary~\ref{cor:SchurofCn}, we have
\[M(G) \cong M(H )\cong C_{q \cdot p_1 \cdot p_2 \cdot p_3}.\]
Next, we show that these groups admit isomorphic complex group algebras. As for both groups the maximal abelian quotient is isomorphic to $K$, both $G$ and $H$ admit $q^2$ non-equivalent 1-dimensional representations. Next, let $\chi$ be a non-trivial irreducible representation of $G'$ or $H'$. Note that $\chi$ is $1$-dimensional and so equals its character.
Then, $\chi$ admits a non-trivial stabilizer if and only if for two different primes $p_\alpha$ and $p_\beta$ the Sylow $p_\alpha$-subgroup and Sylow $p_\beta$-subgroup are contained in the kernel of $\chi$, i.e. if $\chi(n)^p = 1$ for every $n \in N$ and some fixed prime $p$.
In these cases $\chi$ admits a stabilizer of order $q$ and lies in a $K$-orbit and $K'$-orbit of length $q$, respectively.
Since for both $G'$ and in $H'$ there are $p_i^2-1$ irreducible non-trivial representations $\chi$ with $\chi(n)^{p_i} = 1$, for each $1 \leq i \leq 3$, there are $(p_1^2 + p_2^2 + p_3^2 - 3)$ irreducible representations of $G$ of degree $q$ and exactly as many of $H$.
Since $N$ is abelian and normal of index $q^2$ in both $G$ and $H$, all the irreducible representations of $G$ and $H$ which admit a degree greater than $q$ have degree $q^2$ by It\^{o}'s Theorem (cf. \cite[Theorem 6.15]{Isaacs}). Hence, by a simple dimension consideration $\C G\cong \C H$.

Finally, for any proper divisor $d$ of $\sqrt{|G|}$, the number of $d$-Hall subgroups of $G$ is equal to the number of $d$-Hall subgroups of $H$, and moreover any $d$-Hall subgroup $G_1$ of $G$ is isomorphic to any $d$-Hall subgroup $H_1$ of $H$. Therefore, for any cohomology class $[\alpha]\in M(G)$ determined by a $d$-th root of unity $\zeta _d$, there is $[\beta] \in M(H)$ determined by $\zeta _d$ such that $\C^{\alpha}G_1 \cong \C^{\beta} H_1$.
Therefore, by Proposition~\ref{prop:projdim} for any isomorphism $\psi: M(G) \rightarrow M(H)$ we have $\C ^{\alpha} G \cong \C^{\psi(\alpha)}H$ for any $[\alpha]\in M(G)$. Hence $G \sim_\C H$.

We next show that $G$ and $H$ are not isomorphic. Assume to the contrary that $\psi: G \rightarrow H$ is an isomorphism. Clearly $N$ is a characteristic subgroup in both $G$ and $H$ and so $\psi(N) = N$. As any two complements of $N$ in $H$ are conjugate by the Schur-Zassenhaus Theorem, we can also assume $\psi(K) = K'$. For $1 \leq i \leq 3$, denote by $N_i$ the Sylow $p_i$-subgroup of $N$. Let $A_1$ be the matrix by which $x$ acts on $N_1$. Then for any $n_1 \in N_1$ we have
\[ \psi(n_1)\psi(x) = \psi(n_1x) = \psi(x n_1^x) = \psi(x) A_1 \psi(n_1),\]
so $\psi(n_1)^{\psi(x)} = A_1 \psi(n_1)$, i.e. the actions of $x$ and $\psi(x)$ are conjugate in $\SL(2,p_1)$. As $K'$ acts on $N_1$ also by $\langle A_1 \rangle$ and the only matrices in $\langle A_1 \rangle$ conjugate to $A_1$ are $A_1^{\pm 1}$, we conclude $\psi(x) = (x')^{\pm 1} (y')^{j_x}$ for some $j_x$. With an analogues calculation, as $y$ acts trivially on $N_1$ and the kernel of the $K'$-action on $N_1$ is $\langle y' \rangle$, we obtain $\psi(y)  \in \langle y' \rangle$. Repeating the calculations for the action on $N_2$, we get $\psi(x) \in \langle x' \rangle$ and $\psi(y) = (y')^{\pm 1}(x')^{j_y}$ for some $j_y$. So from both actions we get $\psi(x) = (x')^{\pm 1}$ and $\psi(y) = (y')^{\pm 1}$. But considering the action on $N_3$ and using that the kernel of the action of $K$ is $\langle xy \rangle$ and the kernel of the action of $K'$ is $\langle (x')^2 y' \rangle$ we get $\psi(xy) \in \langle (x')^2 y' \rangle$, contradicting the conditions on $\psi(x)$ and $\psi(y)$ found before, as we assumed $q \geq 5$.
\end{example}

\begin{remark}\label{Order64}
We are not aware of non-isomorphic $p$-groups of central type which are $\sim_\mathbb{C}$-equivalent.
Using the computer algebra system \texttt{GAP} we found that every group of order $64$ which is of central type is a $\sim_\mathbb{C}$-singleton. We provide a few details.
The \texttt{GAP}-functions \texttt{CharacterDegrees},
\texttt{AbelianInvariantsMultiplier} and \texttt{SchurCover} make it possible to check conditions A, B and C introduced in Definition \ref{def:conditions} for a given pair of groups.
By determining the maximal degree of a complex representation of a Schur cover of a group it also allows to check, whether a group is of central type.
Since \texttt{GAP} computes the Schur cover as a finitely presented, rather than a polycyclic group,
this may however take a while or not even terminate in a reasonable time frame.
Applying also the theoretical arguments, \cite[Theorem C]{ginosar2012semi} and \cite[Lemma 2.3, Lemma 2.4]{Schnabelnew},
for a few groups, we found that there are $42$ groups of central type of order $64$.
There are five pairs of these groups which satisfy conditions A and B, but none of these pairs satisfies condition C.
So indeed any group of order $64$ of central type is a $\sim_\mathbb{C}$-singleton.
\end{remark}

{\bf Acknowledgements.}
The authors are grateful to Y. Ginosar and W. Kimmerle for valuable discussions.

\begin{appendices}
	\center{\textbf{Appendix}}
    \begin{table}[htb!]
        \centering
       \caption{Groups of order $p^4$ where $p$ is an odd prime} 
        \setlength{\tabcolsep}{2pt}
        \resizebox{\linewidth}{!}{%
        \begin{tabular}{l c l}
 \hline\hline 
\\ 
\hline
$G_{(\text{i})}$ & $C_{p^4}$ &    \\
\hline
$G_{(\text{ii})}$ & $C_{p^3}\times C_p$ &    \\
\hline
$G_{(\text{iii})}$ & $C_{p^2}\times C_{p^2}$ &    \\
\hline
$G_{(\text{iv})}$ & $C_{p^2}\times C_p\times C_p$ &\\
\hline
$G_{(\text{v})}$ & $C_p^4$ &  \\
\hline
$G_{(\text{vi})}$ & $\langle a,b\rangle$ & $a^{p^3}=b^p=1,\quad aba^{-1}=ba^{p^2}$ \\
\hline
$G_{(\text{vii})}$ & $\langle a,b,c \rangle$ & $a^{p^2}=b^p=c^p=1,\quad  [a,b]=[a,c]=1,[b,c]=a^p$ \\
\hline
$G_{(\text{viii})}$ & $\langle a,b \rangle$ & $a^{p^2}=b^{p^2}=1,\quad  [a,b]=a^p$ \\
\hline
$G_{(\text{ix})}$ & $\langle a,b,c \rangle$ & $a^{p^2}=b^p=c^p=1,\quad  [a,b]=[b,c]=1, [a,c]=a^p$ \\
\hline
$G_{(\text{x})}$ & $\langle a,b,c \rangle$ & $a^{p^2}=b^p=c^p=1,\quad  [a,b]=[b,c]=1, [a,c]=b$ \\
\hline
$G_{(\text{xi})}$ for $p>3$ & $\langle a,b,c \rangle$ & $a^{p^2}=b^p=c^p=1,\quad  [b,c]=1, [a,b]=a^p,ac=cab$ \\
\hline
$G_{(\text{xii})}$ for $p>3$ & $\langle a,b,c \rangle$ & $a^{p^2}=b^p=c^p=1,\quad  [b,c]=a^p, [a,b]=a^p,ac=cab$ \\
\hline
$G_{(\text{xiii})}$ for $p>3$ & $\langle a,b,c \rangle$ & $a^{p^2}=b^p=c^p=1,\quad  [b,c]=a^{\alpha p}, [a,b]=a^p,ac=cab$, $\alpha=\text{any non-residue } (\text{mod } p)$ \\
\hline
$G_{(\text{xi})}$ for $p=3$ & $\langle a,b,c \rangle$ & $a^9=b^3=c^3=1,\quad  [b,c]=1, [a,b]=a^3,[a,c]=ba^3$ \\
\hline
$G_{(\text{xii})}$ for $p=3$ & $\langle a,b,c \rangle$ & $a^9=b^3=1,c^3=a^3,\quad  [b,c]=1, [a,b]=a^p,[c,a]=ba^3$ \\
\hline
$G_{(\text{xiii})}$ for $p=3$ & $\langle a,b,c \rangle$ & $a^9=b^3=1,c^3=a^6,\quad  [b,c]=1, [a,b]=a^p,[c,a]=ba^3$ \\
\hline
$G_{(\text{xiv})}$  & $\langle a,b,c,d \rangle$ & $a^p=b^p=c^p=d^p=1,\quad  [a,b]=[a,c]=[a,d]=[b,c]=[b,d]=1, [c,d]=a$ \\
\hline
$G_{(\text{xv})}$ for $p>3$ & $\langle a,b,c,d \rangle$ & $a^p=b^p=c^p=d^p=1,\quad  [a,b]=[a,c]=[a,d]=[b,c]=1,[d,b]=a, [d,c]=b$ \\
\hline
$G_{(\text{xv})}$ for $p=3$ & $\langle a,b,c \rangle$ & $a^9=b^3=c^3=1,\quad  [a,b]=1,[a,c]=ba^3,[b,c]=a^3$ \\
\hline
        \end{tabular}}
          \label{tab:p4}
    \end{table}

\vspace{2cm}

    \begin{table}[htb!]
        \centering
       \caption{Groups of order $16$} 
        \setlength{\tabcolsep}{2pt}
        \begin{tabular}{l c l}
 \hline\hline 
\\ 
\hline
$G_{(\text{i})}$ & $C_{16}$ &    \\
\hline
$G_{(\text{ii})}$ & $C_{8}\times C_2$ &    \\
\hline
$G_{(\text{iii})}$ & $C_{4}\times C_{4}$ &    \\
\hline
$G_{(\text{iv})}$ & $C_{4}\times C_2\times C_2$ &\\
\hline
$G_{(\text{v})}$ & $C_2^4$ &  \\
\hline
$G_{(\text{vi})}$ & $\langle a,b\rangle$ & $a^{8}=b^2=1,\quad aba^{-1}=ba^4$ \\
\hline
$G_{(\text{vii})}$ & $\langle a,b,c \rangle$ & $a^{4}=b^2=c^2=1,\quad  [a,b]=[a,c]=1,[b,c]=a^2$ \\
\hline
$G_{(\text{viii})}$ & $\langle a,b \rangle$ & $a^4=b^4=1,\quad  [a,b]=a^2$ \\
\hline
$G_{(\text{ix})}$ & $\langle a,b,c \rangle$ & $a^4=b^2=c^2=1,\quad  [a,b]=[b,c]=1, [a,c]=a^2$ \\
\hline
$G_{(\text{x})}$ & $\langle a,b,c \rangle$ & $a^4=b^2=c^2=1,\quad  [a,b]=[b,c]=1, [a,c]=b$ \\
\hline
$G_{(\text{xi})}$  & $\langle a,b,c \rangle$ & $a^4=c^2=1,a^2=b^2\quad  [a,c]=[b,c]=1, [a,b]=a^2$ \\
\hline
$G_{(\text{xii})}$ & $\langle a,b \rangle$ & $a^8=b^2=1,\quad  [a,b]=a^2$ \\
\hline
$G_{(\text{xiii})}$  & $\langle a,b \rangle$ & $a^8=b^2=1,\quad  [a,b]=a^6$ \\
\hline
$G_{(\text{xiv})}$  & $\langle a,b \rangle$ & $a^8=1, b^2=a^4 \quad  [a,b]=a^6$ \\
\hline
        \end{tabular}
          \label{tab:16}
    \end{table}
\end{appendices}


\begin{thebibliography}{10}

%



\bibitem{AGdR}
E.~Aljadeff, Y.~Ginosar, and {\'A}.~del R{\'{\i}}o.
\newblock Semisimple strongly graded rings.
\newblock {\em J. Algebra}, 256(1):111--125, 2002.


%


\bibitem{Brown2}
K.~S. Brown.
\newblock {\em Cohomology of groups}, volume~87 of {\em Graduate Texts in
  Mathematics}.
\newblock Springer-Verlag, New York, 1994.
\newblock Corrected reprint of the 1982 original.


\bibitem{MR913203}
R.~Brown, D.~L. Johnson, and E.~F. Robertson.
\newblock Some computations of nonabelian tensor products of groups.
\newblock {\em J. Algebra}, 111(1):177--202, 1987.







\bibitem{GAP}
The GAP~Group.
\newblock GAP -- Groups, Algorithms, and Programming, Version 4.8.5, 2016.
 \url{http://www.gap-system.org},


\bibitem{ginosar2012semi}
Y.~Ginosar and O.~Schnabel.
\newblock Semi-invariant {M}atrices over {F}inite {G}roups.
\newblock {\em Algebr. Represent. Theory}, 17(1):199--212, 2014.

\begingroup
\raggedright
\bibitem{ginosargradings}
Y.~Ginosar and O.~Schnabel.
\newblock Groups of central type, maximal connected gradings and intrinsic
  fundamental groups of complex semisimple algebras.
\newblock {\em Trans. Amer. Math. Soc.}, 371(9):6125-6168, 2019.
 \endgroup


\bibitem{Higgs2006}
R.~Higgs and D.~Healy.
\newblock Projective character degree patterns of groups of order {$p^4$}.
\newblock {\em Comm. Algebra}, 34(12):4623--4630, 2006.

\bibitem{Hertweck}
M.~Hertweck.
\newblock A counterexample to the isomorphism problem for integral group rings.
\newblock {\em Ann. of Math. (2)}, 154(1):115--138, 2001.


\bibitem{Higgs88}
R.~J. Higgs.
\newblock Groups with two projective characters.
\newblock {\em Math. Proc. Cambridge Philos. Soc.}, 103(1):5--14, 1988.

\bibitem{HoffmanHumphreys}
P. Hoffman and J.~F. Humphreys.
\newblock Non-isomorphic groups with the same projective character tables.
\newblock {\em Comm. Algebra}, 15(8):1637--1648, 1987.




\bibitem{Huppert}
B. Huppert,
\newblock{\em Endliche Gruppen. I}, Die Grundlehren der Mathematischen Wissenschaften, Band 134, Springer-Verlag, Berlin-
New York, 1967.

\bibitem{Isaacs}
I. M. Isaacs.
\newblock{\em Character theory of finite groups}, Academic Press
[Harcourt Brace Jovanovich, Publishers], New York-London,
1976,
\newblock{\em Pure and Applied Mathematics}, No. 69.


\bibitem{karpilovsky1}
G.~Karpilovsky.
\newblock {\em The {S}chur multiplier}, volume~2 of {\em London Mathematical
  Society Monographs. New Series}.
\newblock The Clarendon Press, Oxford University Press, New York, 1987.

\bibitem{karpilovsky2}
G.~Karpilovsky.
\newblock {\em Group representations. Vol. 2.}, {\em North-Holland Mathematics Studies, 177}.
\newblock North-Holland Publishing Co., Amsterdam, 1993. xvi+902 pp.


\bibitem{Vavasseur}
R.~Le~Vavasseur.
\newblock Les groupes d'ordre {$p^2q^2, p$} \'etant un nombre premier plus
  grand que le nombre premier {$q$}.
\newblock {\em Ann. Sci. \'Ecole Norm. Sup. (3)}, 19:335--355, 1902.

\bibitem{Oystaeyen}
E.~Nauwelaerts and F.~Van~Oystaeyen.
\newblock The {B}rauer splitting theorem and projective representations of
  finite groups over rings.
\newblock {\em J. Algebra}, 112(1):49--57, 1988.



\bibitem{Perlis}
S.~Perlis and G.~L. Walker.
\newblock Abelian group algebras of finite order.
\newblock {\em Trans. Amer. Math. Soc.}, 68:420--426, 1950.


\bibitem{RoggenkampScott}
K.~W.~Roggenkamp and L.~L.~Scott.
\newblock Isomorphisms of {$p$}-adic group rings.
\newblock {\em Ann. of Math. (2)}, 126(3):593--647, 1987.

\bibitem{Schnabelnew}
O.~Schnabel.
\newblock Simple twisted group algebras of dimension {$p^4$} and their
  semi-centers.
\newblock {\em Comm. Algebra}, 44(12):5395--5425, 2016.



\bibitem{Schur}
I.~Schur.
\newblock Untersuchungen \"{u}ber die {D}arstellung der endlichen {G}ruppen durch
  gebrochene lineare {S}ubstitutionen.
\newblock {\em Journal f\"{u}r die reine und angewandte Mathematik.},
  (132):85--137, 1907.

\bibitem{Tahara}
K.I.~Tahara
\newblock On the second cohomology groups of semidirect products
\newblock{\em Math. Z.}, 129, 365--379, 1972




%


\end{thebibliography}
\end{document}